\crefname{hypothesis}{Hypothesis}{Hypotheses}
\title{Exponentially convergent multiscale methods for high frequency heterogeneous Helmholtz equations\thanks{Submitted to the editors DATE: TBA.
\funding{This research is in part supported by NSF Grants DMS-1912654 and DMS 2205590. Y. Chen is also grateful to the support from the Caltech Kortchak Scholar Program.}}}
\author{Yifan Chen\thanks{Applied and Computational Mathematics, Caltech
  (\email{yifanc@caltech.edu}, \email{hou@cms.caltech.edu},  \email{roywang@caltech.edu}).}
  \and Thomas Y. Hou\footnotemark[2]
\and Yixuan Wang\footnotemark[2]}
\newcommand{\cF}{\mathcal{F}}
\newcommand{\cN}{\mathcal{N}}
\newcommand{\cH}{\mathcal{H}}
\newcommand{\cT}{\mathcal{T}}
\newcommand{\cE}{\mathcal{E}}
\newcommand{\bR}{\mathbb{R}}
\newcommand{\bN}{\mathbb{N}}
\newcommand{\rN}{\mathrm{N}}
\newcommand{\sfh}{\mathsf{h}}
\newcommand{\sfb}{\mathsf{b}}
\newcommand{\sfs}{\mathsf{s}}
\newcommand{\sfp}{\mathsf{p}}
\newcommand{\dist}{\operatorname{dist}}
\newtheorem{assumption}{Assumption}
 \pgfplotsset{compat=1.16} 
\newcommand{\yc}[1]{{\color{black} #1}}
\newcommand{\yw}[1]{{\color{black} #1}}
\begin{document}

\maketitle

% REQUIRED
\begin{abstract}
In this paper, we present a multiscale framework for solving the Helmholtz equation in heterogeneous media without scale separation and in the high frequency regime where the wavenumber $k$ can be large. The main innovation is that our methods achieve a nearly exponential rate of convergence with respect to the computational degrees of freedom, using a coarse grid of mesh size $O(1/k)$ without suffering from the well-known pollution effect. The key idea is {a non-overlapped domain decomposition and its associated} coarse-fine scale decomposition of the solution space that adapts to the media property and wavenumber; this decomposition is inspired by the multiscale finite element method (MsFEM). We show that the coarse part is of \textit{low complexity} in the sense that it can be approximated with a nearly exponential rate of convergence via local basis functions{, due to the compactness of a restriction operator that maps Helmholtz-harmonic functions to their interpolation residues on edges,} while the fine part is \textit{local} such that it can be computed efficiently using the local information of the right hand side. The combination of the two parts yields the overall nearly exponential rate of convergence {of our multiscale method}. {Our method draws many connections to multiscale methods in the literature, which we will comment in detail.} We demonstrate the effectiveness of our methods theoretically and numerically; an exponential rate of convergence is consistently observed and confirmed. In addition, we observe the robustness of our methods regarding the high contrast in the media numerically. \yc{We specifically focus on 2D problems in our exposition since the geometry of non-overlapped domain decomposition is 
simplest to explain in such cases; generalizations to 3D will be outlined at the end.}
\end{abstract}

% REQUIRED
\begin{keywords}
 The Helmholtz equation, Heterogeneous Media, High Frequency, Exponential Convergence, Multiscale Methods, High Contrast.
\end{keywords}

% REQUIRED
\begin{AMS}
  65N12, 65N15, 65N30, 31A35.
\end{AMS}

\section{Introduction}
This paper focuses on solving the Helmholtz equation in heterogeneous media and high frequency regimes. 
We consider the model problem in a bounded domain $\Omega \subset \bR^d$ with a Lipschitz boundary $\Gamma$. For generality, the boundary can contain three disjoint parts $\Gamma = \Gamma_D\cup \Gamma_N \cup \Gamma_R$ where $\Gamma_D, \Gamma_N$ and $\Gamma_R$ correspond to the Dirichlet, Neumann and Robin type conditions, respectively.  Given positive constants $A_{\min}$, $A_{\max}$, $\beta_{\min}$, $\beta_{\max}$, $V_{\min}$, $V_{\max}$ and functions $A, \beta, V: \Omega \to \bR$ that satisfy $A_{\min}\leq A(x)\leq A_{\max}$,
$\beta_{\min}\leq \beta(x)\leq \beta_{\max}$ and $V_{\min}\leq V(x)\leq V_{\max}$,
the Helmholtz equation with homogeneous boundary conditions\footnote{For simplicity of presentation, homogeneous boundary conditions are considered here. Generalization to non-homogeneous data is straightforward; see Section \ref{subsec: High Wavelength Example} or \yc{\cite{chen2020exponential}} (Section 5.3).} is formulated as follows:
\begin{equation}
\label{eqn:Helmholtz smooth k}
\left\{
\begin{aligned}
-\nabla \cdot(A\nabla u)-k^{2}V^2 u&=f, \ \text{in} \ \Omega\\
u&=0, \ \text{on} \ \Gamma_D\\
A\nabla u\cdot\nu&=T_ku, \ \text{on} \  \Gamma_N \cup \Gamma_R \, .
\end{aligned}
\right.
\end{equation}
Here, $\nu$ is the outer normal to the boundary. The boundary operator satisfies $T_k u=0$ for $x \in \Gamma_N$ and $T_k u=ik\beta u$ for $x \in \Gamma_R$, where $i$ denotes the imaginary number. The wavenumber $k$ is real and positive, and functions $u$ and $f$ are complex-valued. The aim of this paper is to design a multiscale method for solving \eqref{eqn:Helmholtz smooth k} that achieves a nearly exponential rate of convergence with respect to the computational degrees of freedom. {This is a challenging problem due to combined difficulties of heterogeneity and high frequency. We review the related literature of this research field in Section \ref{sec: literature for Helmholtz} and discuss our methodology as well as its motivations and related work in Section \ref{sec: contribution, motives}.}

\subsection{{Literature for Solving Helmholtz Equations}}
\label{sec: literature for Helmholtz}
 The Helmholtz equation has been widely used in studying wave propagation in complex media. Numerical simulation of this equation still remains a challenging task, especially in the regime where the wavenumber $k$ is large. The main numerical difficulty lies in the highly oscillatory pattern of the solution. Furthermore, the operator in the equation is indefinite, which leads to severe instability issues for standard numerical solvers such as the finite element method (FEM). Indeed, a well-known pre-asymptotic effect called the pollution effect \cite{babuska1997pollution} 
can occur --- that is, in order to get a reasonably accurate solution, the mesh size $H$ in the FEM needs to be much smaller than $1/k$. For example, for a standard P1-FEM approach, the mesh size needs to satisfy $H = O(1/k^2)$ \yc{for quasi-optimality of the solution} \cite{aziz1988two, babuska1997pollution}. This constraint on $H$ is much stronger than the typical condition in the approximation theory for representing an oscillatory function with frequency $k$, where $H = O(1/k)$, i.e., a fixed number of grid points per wavelength, would suffice for an accurate approximate solution.  

 In the literature, there have been many attempts to overcome or alleviate the difficulty associated with the pollution effect, so that a mesh size of $H=O(1/k)$ can be used. We highlight two classes of methods, namely the $hp$-FEM and multiscale methods, which can theoretically deal with the pollution effect under their respective model assumptions. 
 The $hp$-FEM is proposed in \cite{melenk2010convergence,melenk2011wavenumber}, which is a FEM using local high order polynomials. It is shown that by choosing the degrees of local polynomials $p = O(\log k)$, the pollution effect can be suppressed in principle for the Helmholtz equation with constant $A, V$ and $\beta$. Nevertheless, to the best of our knowledge, there have been no theoretical results for this methodology when these coefficients become rough. \yc{There have been some recent developments for $hp$-FEM methods when piecewise regularity of the coefficients is assumed \cite{bernkopf2022wavenumber, lafontaine2022wavenumber}.} In general, it is well-known that polynomials might behave arbitrarily badly even for elliptic equations with rough coefficients \cite{babuvska2000can}.
 
Multiscale methods, on the other hand, have long been developed to address the difficulty associated with rough coefficients in elliptic equations. In particular, we mention the LOD and Gamblets related approaches \cite{malqvist_localization_2014,henning2013oversampling,owhadi2014polyharmonic,owhadi_multigrid_2017,owhadi2019operator,chen2019subsampled,chen2020multiscale}, variants of the multiscale finite element method (MsFEM) \cite{hou_multiscale_1997,efendiev_generalized_2013,hou2015optimal,chung2018constraint,fu2019edge,chen2020exponential} \yc{and generalized finite element based on partition of unity methods (PUM) \cite{babuska2011optimal, smetana2016optimal,buhr2018randomized,chen2020randomized,babuvska2020multiscale, schleuss2020optimal, ma2021error, ma2021novel}}, which are most related to this paper. Recently, the LOD method has been generalized to the case of Helmholtz equations with high wavenumber and heterogeneous media \cite{peterseim2017eliminating,gallistl2015stable,brown2017multiscale,peterseim2020computational}. They show that with a coarse mesh of size $O(H)$ and localized multiscale basis functions of support size $O(H\log(1/H)\log k)$, the pollution effect can be overcome once the stability constant of the solution operator of the Helmholtz equation is of at most polynomial growth. An error of at most $O(H)$ is established. \yc{Very recently, there is also a super-localized version of LOD-type method for the Helmholtz equations, proposed in \cite{freese2021super}, where the support of basis functions is further reduced to $O(H\log^{(d-1)/d}(k/H))$.} 

From the perspective of MsFEM methodology, the authors in \cite{fu2019wavelet} introduce WMsFEM to address the pollution effect successfully. Their basis functions are all of local support size $O(H)$. On the theoretical side, they require $O(k)$ number of basis functions in each element in order to achieve $O(H)$ accuracy. In contrast, our method in this paper, which can be viewed as a generalization of MsFEM, only requires $O(\log^{d+1} k)$ number of basis function of support size $O(H)$ in each element to handle the pollution effect and to achieve $O(H)$ accuracy. More importantly, our method yields an overall exponential rate of convergence regarding the number of basis functions, thanks to a systematic decomposition and treatment of coarse and fine scale parts of the solution.

\yc{In the literature, multiscale methods with exponential convergence for elliptic equations with rough media first appeared in \cite{babuska2011optimal},  which is based on local optimal basis approximation combined with the partition of unity method (PUM).
% In the optimal basis approach with PUM, the exponentially decaying spectrum of a transfer operator plays an important role, whose singular vectors serve as the optimal approximation space. 
There has been a number of recent papers that are actively working on improving the methodology \cite{smetana2016optimal,buhr2018randomized,chen2020randomized,babuvska2020multiscale, schleuss2020optimal, ma2021error, ma2021novel}, aiming for more refined continuous and discrete analysis, randomized computation, efficient implementation, and generalization beyond elliptic equations. Our initial work \cite{chen2020exponential} on exponentially convergent multiscale methods for elliptic equations draws many motivations from these results, especially the Caccioppoli-type inequality that is essential for proving the exponential convergence. Different from the PUM based approach, our method is based on non-overlapped domain decomposition. More comparisons will be discussed in Subsection \ref{sec: contribution, motives}.
While revising this paper on solving the Helmholtz equations, we found that the authors in \cite{ma2021wavenumber} also proposed an exponentially convergent method for the Helmholtz equations using the PUM-based optimal local approximation methodology. }

In addition to those  methods mentioned above, there have also been several algorithms based on the MsFEM methodology \cite{oberai1998multiscale,fu2017fast} or the HMM methodology \cite{ohlberger2018new} with particular empirical success for solving the Helmholtz equation. It is also worth noting that, in conjunction with designing a good discretization scheme as above, one could also consider fast solvers for the discrete linear system. See, for example, the method of sweeping preconditioners \cite{engquist2011sweeping,engquist2012sweeping,poulson2013parallel}, where a preconditioning matrix is constructed to compute approximations of the Schur complements successively. Very recently, the LOD approach has also been combined with the hierarchical approach of Gamblets \cite{hauck2021multi} to get a multiresolution solver for the discrete system. 

\subsection{Main Contributions  {and Motivations}}
\label{sec: contribution, motives}
In this paper, we propose a multiscale framework for solving the Helmholtz equation in rough media and high frequency regimes, specifically in dimension $d=2$ where the mesh geometry of the non-overlapped domain decomposition is simplest to describe. Generalization to higher dimensions will be elaborated at the end of this paper. Our idea is based on a multiscale method in our previous work \cite{chen2020exponential} for solving elliptic equations with rough coefficients in an exponentially convergent manner. This paper aims to extend this framework to the more challenging Helmholtz equation where the operator is non-Hermitian and indefinite. It is perhaps surprising that the techniques in multiscale methods for elliptic equations can be systematically adapted  to the Helmholtz equation. \yc{Indeed, it has been proved in \cite{engquist2018approximate} that the Green function of the Helmholtz equations needs fundamentally polynomial on $k$ number of degrees of freedom to approximate, where they consider basis functions independent of the right hand side. Here, our results demonstrate that one can actually compress the solution operator exponentially efficiently by adding a number of local basis functions that depend on the local information of the right hand side. This shows that one can still achieve significant compression of the high frequency Helmholtz solution operator with rough coefficients by developing a data-driven compression operator adapted to the right hand side. }

We outline the main contributions of this paper below.
\begin{enumerate}
    \item In studying the solution behavior of the Helmholtz equation \eqref{eqn:Helmholtz smooth k}, we introduce a coarse-fine scale decomposition of its solution space. This decomposition is adapted to the coarse mesh structure; a mesh size of $O(1/k)$ suffices to make this coarse-fine scale decomposition well defined. Moreover, the decomposition is adapted to the coefficients $A, V, \beta$ and the wavenumber $k$. 
    \item Analytically, we show the fine scale part is of $O(H)$ in the energy norm, and it can be computed efficiently by solving the Helmholtz equations locally. Meanwhile, we prove that the space of the coarse scale part is of low complexity, such that there exist local multiscale basis functions that can approximate this part in a nearly exponentially convergent manner.  These serve as the cornerstone of our multiscale numerical method. 
    \item Numerically, we propose a multiscale framework that solves the two parts separately. The nearly exponential rate of convergence in the energy norm and $L^2$ norm is theoretically proved in this paper.
    \item Experimentally, we conduct a number of numerical tests and observe consistently that our multiscale methods give a nearly exponential rate of convergence, even for problems with high-contrast media. Based on these numerical studies, several recommendations for efficient implementations of our methods are provided, especially on how to design the offline and online computation to handle multiple right hand sides efficiently.
\end{enumerate}

{To the best of our knowledge, this multiscale framework is the first one that can be proved rigorously to achieve a nearly exponential rate of convergence in solving \eqref{eqn:Helmholtz smooth k} with rough $A,\beta, V$ and large $k$, especially for $d=2$. It generalizes our previous work on exponential convergence for solving rough elliptic equations \cite{chen2020exponential}, which is motivated by the PUM approach using optimal local approximation spaces for elliptic equations \cite{babuska2011optimal}.}

{Different from the PUM that uses an overlapped domain decomposition, our method relies on non-overlapped domain decomposition and an edge coupling approach to combine local basis functions as in MsFEM. Our coarse-fine scale decomposition of the solution space is built on this non-overlapped edge coupling. For elliptic equations, this decomposition is the same as the orthogonal decomposition in previous work of MsFEM \cite{hou2015optimal, chen2020exponential} and approximate component mode synthesis \cite{hetmaniuk2010special,hetmaniuk2014error}. Under this line of methodology, this paper 
contributes a principled framework for obtaining nearly exponentially convergent basis functions for multiscale Helmholtz equations. }

{There are many differences between the multiscale methods based on PUM and edge coupling. Basically, the support of basis functions in PUMs is usually larger than that of MsFEMs since non-overlapped domain decomposition leads to smaller decomposed domains than its overlapped counterpart. There is no need to introduce additional freedom of partition of unity functions as well.
%and without the need to solve the coarse and fine parts together in a large linear system as in the PUM-based optimal basis approach \cite{babuvska2020multiscale}. 
On the other hand, in 2D, the number of local edges could be twice as many as the number of local domains, leading to more work in constructing the basis functions. Moreover, there will be an increasing design complexity for the non-overlapped edge coupling approach for higher-dimensional problems since the boundaries of high dimensional local domains become more complicated. This is why in this paper, we dedicate specifically to 2D Helmholtz equations \yc{for detailed analysis and numerical experiments. We leave the discussion on generalizations to 3D problems in Section \ref{sec 3d}.}}

{We are not going to dive very deeply about the fundamental comparison between overlapped and non-overlapped decomposition in multiscale methods. The aim of this paper is to demonstrate that one could achieve a nearly exponential convergence rate theoretically using the non-overlapped edge coupling framework in a principled way and show that this method is very competitive numerically. A number of technical difficulties, such as the appropriate approximation space for the edge functions and the spectral analysis of the local restriction operator, are carefully addressed to lay out this framework. We believe this work could help future researchers understand and analyze multiscale methods that are built on different local decomposition and global coupling approaches.} 

% {Our coarse-fine decomposition of the solution space is built on the non-overlapped domain decomposition. For elliptic equations, this decomposition is the same as the orthogonal decomposition in previous work of MsFEM \cite{hou2015optimal, chen2020exponential} and approximate component mode synthesis \cite{hetmaniuk2010special,hetmaniuk2014error}. The advantage of the non-overlapped decomposition is that we can compute the coarse part and fine part separately and add them together to form the solution, so that the online stage computation can be performed in a much cheaper manner. The novelty of our framework in this paper is the adaptation of the decomposition to indefinite and non-Hermitian Helmholtz equations, as well as  detailed analysis of the low complexity of the coarse scale part in this decomposition.}

% To the best of our knowledge, this multiscale framework is the first one that can be proved rigorously to achieve a nearly exponential rate of convergence in solving \eqref{eqn:Helmholtz smooth k} with rough $A,\beta, V$ and large $k$, for dimension $d=2$. {We note that recently the authors in \cite{ma2021wavenumber} also proposed an exponentially convergent method for the Helmholtz equation in general dimensions by using partition of unity approaches. We will compare with their method in terms of number of basis functions in our numerical experiments. }
Lastly, we remark that in principle, our multiscale algorithm can be applied to general Helmholtz equations numerically, while most of our theoretical results rely on analytical properties of the solution to equation \eqref{eqn:Helmholtz smooth k}, related to the well-posedness, stability and  $C^{\alpha}$ estimates. Therefore, typical conditions (usually very mild) of these analytical properties will be assumed in this paper, in order to get a rigorous theory. We will mention several references to these results in this paper. Some numerical examples in which these assumptions are violated will be also presented to illustrate the effectiveness of our algorithm in a general context. 

\subsection{Organization of the paper} The rest of this paper is organized as follows. In Section \ref{sec: preliminaries}, we review preliminary results for the Helmholtz equation, including the well-posedness, stability, adjoint problems, and H\"older $C^{\alpha}$ estimates. Section \ref{construct} is devoted to analyzing the solution space based on a coarse-fine scale decomposition. Moreover, the computational properties of the coarse and fine parts are rigorously studied in detail. Building upon these properties, in Section \ref{methods} we develop the multiscale computational framework and prove the nearly exponential rate of convergence for  our multiscale methods. The detailed numerical algorithms are discussed and implemented in Section \ref{sec-numeric-experiments} for several Helmholtz equations. To improve the readability of our paper,  some technical proofs of theorems and propositions will be deferred to Section \ref{theory}. Some concluding remarks are made in Section \ref{sec-concluding-remark}.

\section{Preliminaries on the Helmholtz Equation}
\label{sec: preliminaries}
Our multiscale algorithm relies on an in-depth understanding of the solution space of \eqref{eqn:Helmholtz smooth k}. To achieve this, we first present several analytic results for \eqref{eqn:Helmholtz smooth k}, which will serve as preliminaries for our subsequent discussions. We cover the weak formulation, the well-posedness of the equation, the stability estimates of the solution, and H\"older estimates. 
\subsection{Notations} We use $H^1(\Omega)$ to denote the standard complex Sobolev space in $\Omega$, containing $L^2$ functions with $L^2$ first order derivatives. We write $(u,v)_{D}:=\int_D u \bar{v}$ for any domain $D$. We use $C$ as a generic constant, and its value can change from place to place; we will state explicitly the parameters that this constant may or may not depend on.
\subsection{Analytic Results}
\label{sec-analytic-results}
For the model problem \eqref{eqn:Helmholtz smooth k}, we consider the complex Sobolev space $\mathcal{H}(\Omega):=\{u \in H^1(\Omega): u|_{\Gamma_D}=0 \}$ in which functions have zero trace on the Dirichlet boundary. This space is equipped with the norm $\|\cdot\|_{\cH(\Omega)}$ such that
\begin{equation*}
\|u\|_{\mathcal{H}(\Omega)} :=\int_{\Omega} A|\nabla u|^2+k^2V^2|u|^2\,.
\end{equation*}
The dual space of ${\mathcal{H}(\Omega)}$ is denoted by ${\mathcal{H}^{-1}(\Omega)}$ equipped with the norm $\|\cdot \|_{{\mathcal{H}^{-1}(\Omega)}}$; by definition one has
\[\|f\|_{\cH^{-1}(\Omega)}:=\sup_{v \in \cH(\Omega)} \frac{|(f,v)_{\Omega}|}{\|v\|_{\cH(\Omega)}}\, . \]
Now, we present several analytic results pertaining to the Helmholtz equation \eqref{eqn:Helmholtz smooth k}.

\textit{Weak formulation.} The weak formulation of \eqref{eqn:Helmholtz smooth k} is given by
\begin{equation}
\label{eqn: Helmholtz smooth weak form}
a(u, v) :=(A\nabla u, \nabla {v})_{\Omega}-k^{2}(V^2 u,  {v})_{\Omega}  -( T_{k} u,  {v})_{\Gamma_N\cup\Gamma_R} =( f,  {v})_{\Omega} , \quad \forall v \in \cH(\Omega)\, .
\end{equation}

\textit{Continuity estimate.} By the Cauchy-Schwarz and trace inequalities (see Lemma 3.1 of \cite{melenk2010convergence}), the sesquilinear form $a(\cdot,\cdot)$ is bounded on $\cH(\Omega)$ with a constant $C_c$ independent of $k$, i.e., for any $u,v \in \cH(\Omega)$, one has the continuity estimate:
\begin{equation}
\label{eqn: continuity estimate}
|a(u, v)|\leq C_c \|u\|_{\mathcal{H}(\Omega)}\|v\|_{\mathcal{H}(\Omega)}\, .
\end{equation}

\textit{Well-posedness and stability.} If $\Gamma_R$ has positive $d-1$ dimensional measure, then under some mild conditions (see Assumption 2.3 and Theorem 2.4 in \cite{graham2020stability}), problem \eqref{eqn: Helmholtz smooth weak form} admits a unique solution given the right hand side $f \in L^2(\Omega)$. We will assume these conditions. Let the solution operator be $N_k$, so that $u =N_{k} f$. 
Under the same conditions, this operator is stable (Theorem 2.4 in \cite{graham2020stability}) in the sense that
\begin{equation}
\label{eqn: stability}
    C_{\mathrm{stab}}(k) := \sup _{f \in L^{2}(\Omega) \backslash\{0\}} \frac{\left\|N_{k} f\right\|_{\mathcal{H}(\Omega)}}{\|f\|_{L^{2}(\Omega)}} <\infty \, .
\end{equation}
To avoid getting into detailed discussions of these assumptions and for simplicity of presentation, we will base most of our arguments on assuming \eqref{eqn: stability} holds.

The stability constant $C_{\mathrm{stab}}(k)$ will depend on $k$ in general, and obtaining an explicit characterization of this dependence has been a hard task; see \cite{betcke2011condition,brown2017multiscale,graham2019helmholtz,moiola2019acoustic,sauter2018stability}. A prevalent and reasonable assumption on the constant is that of polynomial growth, namely $C_{\mathrm{stab}}(k)\leq C(1+k^\gamma)$ for some constants $\gamma$ and $C$; see for example \cite{lafontaine2019most}. We are not going into detailed discussions on this assumption here, while we mention that the final error estimate of our numerical solution in this paper will depend on $C_{\mathrm{stab}}(k)$ explicitly; thus, those estimates on $C_{\mathrm{stab}}(k)$ in the literature can be readily applied to our context.

In addition, stability for $f \in L^2(\Omega)$ can yield well-posedness and stability for $f \in {\mathcal{H}^{-1}(\Omega)}$. According to Lemma 2.1 in \cite{peterseim2017eliminating} and also \cite{esterhazy2012stability}, one has
\begin{equation}
\label{eqn: wellposed H -1 rhs}
    \sup _{f \in \mathcal{H}^{-1}(\Omega) \backslash\{0\}} \frac{\left\|N_{k} f\right\|_{\mathcal{H}(\Omega)}}{\|f\|_{\mathcal{H}^{-1}(\Omega)}} \leq kC_{\mathrm{stab}}(k)\, .
\end{equation}

\textit{Adjoint problems.} Due to the presence of the Robin boundary condition, $a(\cdot,\cdot)$ is not Hermitian. Its adjoint sesquilinear form is defined as $a^*(u,v)=\overline{a(v,u)}$. The adjoint problem for \eqref{eqn: Helmholtz smooth weak form} is given by $a^*(u,v)=(f,v)_{\Omega}$ for any $v \in \cH(\Omega)$.
It also corresponds to the following PDE: 
\begin{equation*}
\left\{
\begin{aligned}
-\nabla \cdot(A\nabla u)-k^{2}V^2 u&=f, \ \text {on} \ \Omega\\
u&=0, \ \text{in} \ \Gamma_D\\
A\nabla u\cdot\nu&=T_k^*u, \ \text{on} \ \Gamma_N \cup \Gamma_R \, ,
\end{aligned}
\right.
\end{equation*}
where $T^*_ku := \overline{T_k u}=-T_k u$.
The adjoint solution operator is denoted by $N_k^*$. One can readily check that $N^*_k \overline{f}=\overline{N_k{f}}$. Therefore, the adjoint problem admits the same stability constant as the original problem; namely it holds 
\begin{equation*}
C_{\mathrm{stab}}(k) = \sup _{f \in L^{2}(\Omega) \backslash\{0\}} \frac{\left\|N_{k}^{\star}
f\right\|_{\mathcal{H}(\Omega)}}{\|f\|_{L^{2}(\Omega)}} <\infty \, .
\end{equation*}
The adjoint problem will play a valuable role when we analyze the convergence property of our multiscale methods for the Helmholtz equation.

\textit{$C^{\alpha}$ H\"older regularity.} We will need the $C^{\alpha}$ estimates of the solution in order to demonstrate the theoretical properties of our multiscale methods. 
\begin{proposition}
\label{prop: C alpha estimate}
Suppose $d\leq 3$ and \eqref{eqn: stability} holds. If $f \in L^2(\Omega)$, then the solution $u \in C^{\alpha}(\Omega)$ for some $\alpha \in (0,1)$.
\end{proposition}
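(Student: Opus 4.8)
The plan is to reduce the $C^\alpha$ claim to a local interior regularity statement plus a boundary regularity statement, and for each of those to absorb the zeroth-order term $-k^2 V^2 u$ into the right hand side, thereby turning the Helmholtz equation into a scalar divergence-form elliptic equation with bounded measurable coefficient $A$ and an $L^q$ right hand side. Concretely, first note that $\eqref{eqn: stability}$ gives $\|u\|_{\cH(\Omega)} \le C_{\mathrm{stab}}(k)\|f\|_{L^2(\Omega)}$, so in particular $u \in H^1(\Omega) \subset L^{2^*}(\Omega)$ by Sobolev embedding, with $2^* = 2d/(d-2)$ for $d=3$ (and any finite exponent for $d=2$). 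Hence $g := f + k^2 V^2 u \in L^{q}(\Omega)$ for $q = \min(2, 2^*) = 2$ when $d=3$, and $u$ weakly solves $-\nabla\cdot(A\nabla u) = g$ in $\Omega$ with the mixed boundary conditions inherited from $\eqref{eqn:Helmholtz smooth k}$ (the Robin term becoming a zeroth-order boundary contribution $ik\beta u$ on $\Gamma_R$, which is in $L^2(\Gamma_R)$ by the trace theorem).

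Second, I would invoke the De Giorgi--Nash--Moser theory, together with its boundary analogue for mixed Dirichlet/Neumann/Robin conditions on a Lipschitz domain. The interior statement is classical: a weak solution of $-\nabla\cdot(A\nabla u)=g$ with $A$ bounded, measurable, uniformly elliptic and $g\in L^q$ for $q>d/2$ is locally $C^\alpha$ for some $\alpha=\alpha(d,A_{\min},A_{\max},q)\in(0,1)$; see Gilbarg--Trudinger or Han--Lin. Since $d\le 3$ gives $d/2\le 3/2 < 2 = q$, this applies. For behavior up to $\Gamma$, one uses the standard reflection/localization arguments for Lipschitz domains with Neumann and Robin data; the Dirichlet portion $\Gamma_D$ with zero data is the easiest case. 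The upshot is $u\in C^\alpha_{\mathrm{loc}}$ in the interior and $C^\alpha$ up to the boundary, which on the bounded domain $\Omega$ yields $u\in C^\alpha(\Omega)$ (or $C^\alpha(\overline\Omega)$, as one prefers to state it).

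The main obstacle, and the only genuinely delicate point, is the boundary regularity near the interfaces where the boundary conditions change type ($\overline{\Gamma_D}\cap\overline{\Gamma_N}$, etc.) and near the corners of the Lipschitz boundary; mixed boundary value problems can fail to be $C^\alpha$ up to such junctions without geometric assumptions. I would handle this by either (i) citing an off-the-shelf De Giorgi--Nash--Moser result for mixed problems on Lipschitz domains that already incorporates the needed hypotheses (e.g. results of the type in Griepentrog--Recke, or the treatment in the references the paper uses for Helmholtz stability), or (ii) restricting the conclusion to $C^\alpha_{\mathrm{loc}}(\Omega)$ in the interior and $C^\alpha$ near the "nice" boundary portions, which is in fact all that the subsequent multiscale analysis needs (the coarse mesh elements and edges used later sit in the interior or along flat boundary pieces). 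Since the statement of Proposition~\ref{prop: C alpha estimate} only asserts $u\in C^\alpha(\Omega)$ --- i.e. interior H\"older continuity --- option (i) via the interior De Giorgi--Nash--Moser estimate already suffices, and the dependence of $\alpha$ on $k$ can be tracked (it does not degenerate, though the $C^\alpha$ seminorm will carry a factor polynomial in $k$ through $C_{\mathrm{stab}}(k)$ and the $k^2$ multiplying $V^2 u$).

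Finally, I would record the quantitative form for later use: combining the interior Moser estimate $\|u\|_{C^\alpha(\omega')}\le C(\mathrm{dist}(\omega',\partial\Omega))\big(\|u\|_{L^2(\omega)}+\|g\|_{L^q(\omega)}\big)$ with $\|g\|_{L^2}\le \|f\|_{L^2}+k^2 V_{\max}^2\|u\|_{L^2}\le (1+kV_{\max}^2 C_{\mathrm{stab}}(k))\|f\|_{L^2(\Omega)}$ shows the H\"older norm is controlled by $\|f\|_{L^2(\Omega)}$ with a constant depending polynomially on $k$ (through $C_{\mathrm{stab}}(k)$) --- this is the shape of estimate that feeds into the compactness/exponential-convergence arguments in Section~\ref{construct}.
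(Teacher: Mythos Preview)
Your approach is correct and is essentially the same as the paper's: absorb the zeroth-order term $k^2V^2u$ into the right hand side (using the a priori bound from \eqref{eqn: stability} to ensure $u\in H^1(\Omega)$, hence $g=f+k^2V^2u\in L^2(\Omega)$), and then invoke elliptic $C^\alpha$ regularity for the resulting divergence-form equation with mixed boundary data. The paper is simply more terse --- it cites Griepentrog--Recke directly (your option (i)) without the surrounding discussion of De~Giorgi--Nash--Moser or the interface issues, and records the same quantitative estimate $\|u\|_{C^\alpha(\Omega)}\le C(\|f\|_{L^2(\Omega)}+k^2\|u\|_{L^2(\Omega)})$ that you arrive at.
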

We defer the proof of this proposition to Subsection \ref{subsec: Proof of Proposition prop: C alpha estimate}.
\begin{remark}
\yc{
The global regularity estimate may depend on the wavenumber $k$. Nevertheless, we only use it to show qualitatively that our solution is continuous, so that the nodal interpolation in Subsection \ref{sec-Localization of Approximation} is mathematically rigorous. Later, when we derive error estimates of our methods, we will only use the local version of the regularity estimate, where the constant is independent of the wavenumber; see Lemma \ref{lemma: bound H 1/2 by energy norm}. }
\end{remark}
We have presented several critical analytic results for the Helmholtz equation. 
Based on these results, we are now ready to study the solution space of \eqref{eqn:Helmholtz smooth k} in the next section. The key is a coarse-fine scale decomposition of the solution space, which will play an essential role in designing our multiscale algorithms.

\section{Coarse-Fine Scale Decomposition}
\label{construct}
In this section, we develop a coarse-fine scale operator-adapted decomposition of the solution space. This decomposition is adaptive to the mesh structure, and a mesh of size $H=O(1/k)$ suffices to make this coarse-fine scale decomposition well defined. We discuss the setting of the mesh structure in Subsection \ref{geometri}, followed by introducing the coarse-fine scale decomposition in Subsection \ref{de comp}. In Subsection  \ref{sec-small-bubble} we show the fine scale part is local and small up to $O(H)$ in the $\cH(\Omega)$ norm. In Subsection \ref{sec-low-complexity-harmonic-part} we show the coarse-scale component can be approximated via local edge basis functions in a nearly exponentially convergent manner.

\subsection{Mesh Structure}
\label{geometri}
We begin by discussing related concepts of the mesh structure. The focus here is on $d=2$ while remarks on generalization to $d\geq 3$ will be discussed in Section \ref{sec-concluding-remark}. In the mesh structure, we discuss two dimensional elements in Subsection \ref{subsec: mesh structure of elements}, one dimensional edges and zero dimensional nodes, and their neighborship in Subsection \ref{sec-nodes-edges}. See also Figure \ref{fig:mesh geometry} for illustrations.
\subsubsection{Elements} 
\label{subsec: mesh structure of elements}
We consider a shape regular and uniform partition of the domain $ \Omega $ into finite elements, such as triangles and quadrilaterals. The collection of elements is denoted by $\cT_H=\{T_1,T_2,...,T_r\}$.  \yc{For simplicity, we assume that each connected component of the domain is at least partitioned into two elements.}

The mesh size is $H$, i.e., $\max_{T \in \cT_H} \operatorname{diam}(T)=H$. 
The uniformity of the mesh implies $\min_{T \in \cT_H} \operatorname{diam}(T)\geq c_0 H$ for some $0<c_0\leq 1$ that is independent of $H$ and $T$. The shape regularity property implies there is a constant $c_1>0$ independent of $H$ and $T$, such that $\max_{T \in \cT_H} \operatorname{diam}(T)^d /{|T|} \leq c_1$, where $|T|$ is the volume of $T$. 

In this mesh, by using a scaling argument, the following Poincar\'e inequality will hold uniformly for $T \in \cT_H$. This inequality will be  used frequently later.
\begin{proposition}[The Poincar\'e inequality]
\label{prop: poincare inequality}
For any $T \in \cT_H$ and a function $v \in H^1(T)$ that vanishes on one of the edges of $T$, it holds that
    \begin{equation}
\label{Poincaree} \|v\|_{L^2(T)} \leq C_P H \|\nabla v\|_{L^2(T)}\, ,
\end{equation}
where $C_P$ depends on $c_0,c_1$ and $d$.
\end{proposition}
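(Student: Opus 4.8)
The plan is to reduce the statement to the scaled reference configuration and invoke the classical Poincar\'e inequality there. First I would introduce, for each $T \in \cT_H$, an affine (or bi-affine, in the quadrilateral case) map $\Phi_T : \widehat{T} \to T$ from a fixed reference element $\widehat{T}$ of unit diameter. The uniformity assumption $\min_{T} \operatorname{diam}(T) \geq c_0 H$ together with the shape regularity bound $\operatorname{diam}(T)^d / |T| \leq c_1$ guarantees that the Jacobian $D\Phi_T$ and its inverse are bounded in norm by constants depending only on $c_0, c_1, d$; concretely $\|D\Phi_T\| \leq C H$ and $\|D\Phi_T^{-1}\| \leq C/H$, and $|\det D\Phi_T| \sim H^d$, with all implied constants depending only on $c_0,c_1,d$. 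Since $\widehat T$ ranges over only finitely many shapes (triangle, quadrilateral), I can treat this as a single reference element after possibly taking a finite maximum of constants.

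Next I would transplant the function: given $v \in H^1(T)$ vanishing on an edge $e \subset \partial T$, set $\widehat v := v \circ \Phi_T \in H^1(\widehat T)$, which vanishes on the corresponding edge $\widehat e = \Phi_T^{-1}(e)$ of $\widehat T$. On the reference element the Poincar\'e–Friedrichs inequality holds: there is a constant $\widehat C_P$, depending only on $\widehat T$ (hence only on $c_0,c_1,d$ after the finite maximum over admissible reference shapes), such that $\|\widehat v\|_{L^2(\widehat T)} \leq \widehat C_P \|\nabla \widehat v\|_{L^2(\widehat T)}$ for all $\widehat v \in H^1(\widehat T)$ vanishing on $\widehat e$. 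This is standard — it follows, e.g., from the compactness of the embedding $H^1(\widehat T) \hookrightarrow L^2(\widehat T)$ and a contradiction/normal-family argument, using that no nonzero constant vanishes on an edge of positive measure. Then I would change variables back: $\|v\|_{L^2(T)}^2 = \int_{\widehat T} |\widehat v|^2 |\det D\Phi_T|$ and $\|\nabla v\|_{L^2(T)}^2 = \int_{\widehat T} |(D\Phi_T^{-T}) \nabla \widehat v|^2 |\det D\Phi_T|$, so the Jacobian determinant cancels between the two sides and the only surviving scale factor is the $\|D\Phi_T^{-1}\|^2 \leq C/H^2$ coming from the gradient transformation, yielding $\|v\|_{L^2(T)} \leq C_P H \|\nabla v\|_{L^2(T)}$ with $C_P$ depending only on $c_0,c_1,d$.

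The only genuinely delicate point is ensuring the reference-element constant $\widehat C_P$ can be chosen uniformly over all elements $T$ of the mesh, i.e. that the hypotheses really do confine $\widehat T$ to a compact family of admissible shapes; this is exactly what shape regularity plus uniformity buy us (the reference triangle is fixed; reference quadrilaterals have angles bounded away from $0$ and $\pi$ and side-length ratios bounded, hence lie in a compact set), and a short argument shows $\widehat C_P$ is then bounded over that family. Everything else is routine change-of-variables bookkeeping, so I would keep that part brief and emphasize the scaling argument and the uniformity of $\widehat C_P$.
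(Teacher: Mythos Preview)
Your proposal is correct and matches the paper's approach: the paper does not give a detailed proof but simply states that the inequality follows ``by using a scaling argument,'' which is precisely what you have carried out in full.
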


\subsubsection{Nodes, Edges and Their Neighbors} 
\label{sec-nodes-edges}
Let $\cN_H=\{x_1,x_2,...,x_p\}$ be the collection of interior nodes, and $\cE_H=\{e_1,e_2,...,e_q\}$ be the collection of edges except those fully on the boundary of $\Omega$. An edge $e \in \cE_H$ is defined such that there exists two different elements $T_i,T_j$ with $e=\overline{T}_i\bigcap\overline{T}_j$ that has co-dimension $1$ in $\bR^d$. We will use $E_H=\bigcup_{e \in \cE_H} e \subset \Omega$ to denote the edges as a whole set. 

We use the symbol $\sim$ to describe the neighbourship between nodes, edges and elements. More precisely, if we consider a node $x \in \cN_H$, an edge $e \in \cE_H$, and an element $T \in \cT_H$, then, (1) $x \sim e$ denotes $x \in e$; (2) $e \sim T$ denotes $e \subset \overline{T}$; (3) $x \sim T$ denotes $x \in \overline{T}$. The relationship $\sim$ is symmetric.

We use $\rN(\cdot,\cdot)$ to describe the union of neighbors as a set. For example, 
$\rN(x,\cE_H)=\bigcup \{e \in \cE_H: e \sim x\} \subset E_H$, $\rN(x,\cT_H)=\bigcup \{T \in \cT_H: T \sim x\} \subset \Omega$, and $\rN(e,\cT_H)=\bigcup \{T \in \cT_H: T \sim e\} \subset \Omega$. 
\begin{figure}[t]
    \centering
\tikzset{every picture/.style={line width=0.75pt}} %set default line width to 0.75pt        

\begin{tikzpicture}[x=0.75pt,y=0.75pt,yscale=-1,xscale=1]
%uncomment if require: \path (0,277); %set diagram left start at 0, and has height of 277

%Shape: Grid [id:dp11151452961702835] 
\draw  [draw opacity=0][dash pattern={on 4.5pt off 4.5pt}] (45,47) -- (201,47) -- (201,203) -- (45,203) -- cycle ; \draw  [dash pattern={on 4.5pt off 4.5pt}] (84,47) -- (84,203)(123,47) -- (123,203)(162,47) -- (162,203) ; \draw  [dash pattern={on 4.5pt off 4.5pt}] (45,86) -- (201,86)(45,125) -- (201,125)(45,164) -- (201,164) ; \draw  [dash pattern={on 4.5pt off 4.5pt}] (45,47) -- (201,47) -- (201,203) -- (45,203) -- cycle ;
%Shape: Rectangle [id:dp9421479108114538] 
\draw   (123,86) -- (162,86) -- (162,125) -- (123,125) -- cycle ;
%Shape: Brace [id:dp7032208047481304] 
\draw   (123.5,124) .. controls (123.38,128.67) and (125.65,131.06) .. (130.31,131.18) -- (131.68,131.22) .. controls (138.34,131.39) and (141.61,133.81) .. (141.49,138.48) .. controls (141.61,133.81) and (145,131.57) .. (151.67,131.74)(148.67,131.67) -- (154.32,131.81) .. controls (158.99,131.94) and (161.38,129.67) .. (161.5,125) ;
%Shape: Circle [id:dp8145067468735627] 
\draw   (126.13,86) .. controls (126.13,84.27) and (124.73,82.88) .. (123,82.88) .. controls (121.27,82.88) and (119.88,84.27) .. (119.88,86) .. controls (119.88,87.73) and (121.27,89.13) .. (123,89.13) .. controls (124.73,89.13) and (126.13,87.73) .. (126.13,86) -- cycle ;
%Shape: Grid [id:dp4751593600776579] 
\draw  [draw opacity=0][dash pattern={on 4.5pt off 4.5pt}] (364.5,79) -- (516.5,79) -- (516.5,199) -- (364.5,199) -- cycle ; \draw  [dash pattern={on 4.5pt off 4.5pt}] (378.5,79) -- (378.5,199)(417.5,79) -- (417.5,199)(456.5,79) -- (456.5,199)(495.5,79) -- (495.5,199) ; \draw  [dash pattern={on 4.5pt off 4.5pt}] (364.5,93) -- (516.5,93)(364.5,132) -- (516.5,132)(364.5,171) -- (516.5,171) ; \draw  [dash pattern={on 4.5pt off 4.5pt}]  ;

%Shape: Circle [id:dp9186906272699648] 
\draw   (259.13,87) .. controls (259.13,85.27) and (257.73,83.88) .. (256,83.88) .. controls (254.27,83.88) and (252.88,85.27) .. (252.88,87) .. controls (252.88,88.73) and (254.27,90.13) .. (256,90.13) .. controls (257.73,90.13) and (259.13,88.73) .. (259.13,87) -- cycle ;
%Shape: Grid [id:dp9734124720024762] 
\draw  [draw opacity=0][dash pattern={on 4.5pt off 4.5pt}] (217,48) -- (295,48) -- (295,126) -- (217,126) -- cycle ; \draw  [dash pattern={on 4.5pt off 4.5pt}] (256,48) -- (256,126) ; \draw  [dash pattern={on 4.5pt off 4.5pt}] (217,87) -- (295,87) ; \draw  [dash pattern={on 4.5pt off 4.5pt}] (217,48) -- (295,48) -- (295,126) -- (217,126) -- cycle ;

%Shape: Circle [id:dp1390862282850186] 
\draw   (298.13,176) .. controls (298.13,174.27) and (296.73,172.88) .. (295,172.88) .. controls (293.27,172.88) and (291.88,174.27) .. (291.88,176) .. controls (291.88,177.73) and (293.27,179.13) .. (295,179.13) .. controls (296.73,179.13) and (298.13,177.73) .. (298.13,176) -- cycle ;
%Straight Lines [id:da05391272172354811] 
\draw  [dash pattern={on 4.5pt off 4.5pt}]  (291.88,176) -- (334,176) ;
%Straight Lines [id:da9187755819644763] 
\draw  [dash pattern={on 4.5pt off 4.5pt}]  (295,137) -- (295,176) ;
%Straight Lines [id:da5493862716505662] 
\draw  [dash pattern={on 4.5pt off 4.5pt}]  (256,176) -- (295,176) ;
%Straight Lines [id:da32968071781521835] 
\draw  [dash pattern={on 4.5pt off 4.5pt}]  (295,176) -- (295,215) ;

%Straight Lines [id:da157929791519147] 
\draw [line width=0.75]    (307,87) -- (346,87) ;
%Shape: Grid [id:dp3747049285012459] 
\draw  [draw opacity=0][dash pattern={on 4.5pt off 4.5pt}] (307,48) -- (346,48) -- (346,126) -- (307,126) -- cycle ; \draw  [dash pattern={on 4.5pt off 4.5pt}]  ; \draw  [dash pattern={on 4.5pt off 4.5pt}]  ; \draw  [dash pattern={on 4.5pt off 4.5pt}] (307,48) -- (346,48) -- (346,126) -- (307,126) -- cycle ;

% Text Node
\draw (136,97) node [anchor=north west][inner sep=0.75pt]   [align=left] {$\displaystyle T$};
% Text Node
\draw (137,137) node [anchor=north west][inner sep=0.75pt]   [align=left] {$\displaystyle e$};
% Text Node
\draw (110,67) node [anchor=north west][inner sep=0.75pt]   [align=left] {$\displaystyle x$};
% Text Node
\draw (33,223) node [anchor=north west][inner sep=0.75pt]   [align=left] { $\displaystyle \quad x\in \mathcal{N}_{H} ,e\in \mathcal{E}_{H} ,T\in \mathcal{T}_{H}$};
% Text Node
\draw (262,66) node [anchor=north west][inner sep=0.75pt]   [align=left] {$\displaystyle x$};
% Text Node
\draw (279,156) node [anchor=north west][inner sep=0.75pt]   [align=left] {$\displaystyle x$};
% Text Node
\draw (323,68) node [anchor=north west][inner sep=0.75pt]   [align=left] {$\displaystyle e$};
% Text Node
\draw (218,136) node [anchor=north west][inner sep=0.75pt]   [align=left] {$\displaystyle\ \  \rN( x,\mathcal{T}_{H})$};
% Text Node
\draw (301,135) node [anchor=north west][inner sep=0.75pt]   [align=left] {$\displaystyle \ \rN( e,\mathcal{T}_{H})$};
% Text Node
\draw (267,221) node [anchor=north west][inner sep=0.75pt]   [align=left] {$\displaystyle \ \rN( x,\mathcal{E}_{H})$};
% Text Node
\draw (391,219) node [anchor=north west][inner sep=0.75pt]   [align=left] {A fraction of $\displaystyle E_{H}$};

\end{tikzpicture}
    \caption{Geometry of the mesh}
    \label{fig:mesh geometry}
\end{figure}
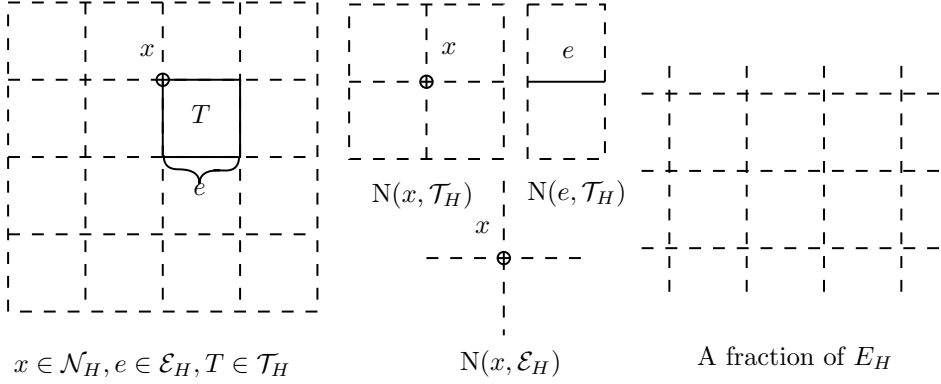

\subsection{Decomposition of Solution Space}
\label{de comp}
With the mesh structure defined, we now discuss the coarse-fine scale decomposition of the solution space. We first discuss decomposition in the local element $T$ in Subsection \ref{sec-Local decomposition} and then the global decomposition in Subsection \ref{sec-Global decomposition}.
 
\subsubsection{Local decomposition} 
\label{sec-Local decomposition}
A crucial requirement for the decomposition to be well defined is that the mesh size is order $O(1/k)$; see Assumption \ref{small mesh}. As we will see later, this bound on $H$ ensures that local Helmholtz problems in each element have properties that are similar to those of elliptic problems; thus, techniques in elliptic equations can then be applied.
\begin{assumption}
\label{small mesh}
The mesh size satisfies $H\leq {A}_{\min}^{1/2}/(\sqrt{2}C_P V_{\max} k)$, where $C_P$ is the constant in Proposition \ref{prop: poincare inequality}.
\end{assumption}

Given Assumption \ref{small mesh}, we decompose\footnote{This decomposition is inspired by that in the elliptic case \cite{chen2020exponential}.} $u$ into two parts $u=u_{T}^{\sfh}+u_T^{\sfb}$ in each element $T \in \cT_H$. The two components satisfy: 
\begin{equation}
\label{eqn:Helmholtz decomposed}
\begin{aligned}
    &\left\{
    \begin{aligned}
    -\nabla \cdot (A \nabla u_T^\sfh )-k^{2}V^2 u^\sfh_T&=0, \ \text{in} \  T\\
    u_T^\sfh&=u, \ \text{on} \  \partial T \setminus (\Gamma_N \cup \Gamma_R)\\
    A\nabla u_T^\sfh\cdot\nu&=T_k u_T^\sfh,\  \text{on} \ \partial T \cap (\Gamma_N \cup \Gamma_R)\, ,
    \end{aligned}
    \right.
    \\
    &\left\{
    \begin{aligned}
    -\nabla \cdot (A \nabla u^\sfb_T )-k^{2}V^2 u^\sfb_T&=f,\  \text{in} \  T\\
    u^\sfb_T&=0, \ \text{on} \  \partial T\setminus (\Gamma_N \cup \Gamma_R)\\
   A\nabla u^\sfb_T\cdot\nu&=T_k u^\sfb_T, \ \text{on} \ \partial T \cap (\Gamma_N \cup \Gamma_R) \, .
    \end{aligned}
    \right.
    \end{aligned}
\end{equation}
In short, the part $u^{\sfh}_T$ incorporates the boundary value of $u$, while $u^{\sfb}_T$ contains information of the right hand side. Both equations in \eqref{eqn:Helmholtz decomposed} should be understood in the standard weak sense using the following local sesquilinear form $a_T(\cdot,\cdot)$ in $T$:
\begin{equation}
a_T(v, w) :=(A\nabla v, \nabla {w})_{T}-k^{2}(V^2 v,  {w})_{T}  -( T_{k}v,  {w})_{\partial T \cap (\Gamma_N\cup\Gamma_R)}\, \text{ for } v,w \in \cH(T) \, ,
\end{equation}
where $\cH(T):=\cH(\Omega)|_T$, the restriction of $\cH(\Omega)$ in the domain $T$. The well-posedness of the two problems is due to the following proposition:
\begin{proposition}
\label{prop-well-definedness-local-prob}
Under Assumption \ref{small mesh}, for $v \in \cH(T)$ that vanishes on one of the edges of $T$, the corresponding sesquilinear form is coercive such that
\[\Re a_T(v,v)\geq \frac{1}{2} \|A^{1/2}\nabla v\|_{L^2(T)}^2\, .  \]
\end{proposition}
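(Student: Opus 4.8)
The plan is to compute $\Re a_T(v,v)$ directly from its definition and bound the indefinite and boundary terms using Assumption~\ref{small mesh}. Writing out the sesquilinear form with $v=w$,
\begin{equation*}
a_T(v,v) = \|A^{1/2}\nabla v\|_{L^2(T)}^2 - k^2\|V v\|_{L^2(T)}^2 - (T_k v, v)_{\partial T \cap (\Gamma_N\cup\Gamma_R)}\, .
\end{equation*}
The first step is to handle the boundary term: since $T_k v = 0$ on $\Gamma_N$ and $T_k v = ik\beta v$ on $\Gamma_R$, the term $(T_k v, v)_{\partial T\cap(\Gamma_N\cup\Gamma_R)}$ equals $ik\int_{\partial T\cap\Gamma_R}\beta|v|^2$, which is purely imaginary (as $\beta$ is real and positive); hence it contributes nothing to the real part. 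So $\Re a_T(v,v) = \|A^{1/2}\nabla v\|_{L^2(T)}^2 - k^2\|V v\|_{L^2(T)}^2$.

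The second step is to absorb the negative term $k^2\|Vv\|_{L^2(T)}^2$ into half of the gradient term. Here I use that $v$ vanishes on one of the edges of $T$, so the Poincar\'e inequality of Proposition~\ref{prop: poincare inequality} applies: $\|v\|_{L^2(T)}\leq C_P H\|\nabla v\|_{L^2(T)}$. Combining with $V\leq V_{\max}$ and $A\geq A_{\min}$,
\begin{equation*}
k^2\|V v\|_{L^2(T)}^2 \leq k^2 V_{\max}^2 C_P^2 H^2 \|\nabla v\|_{L^2(T)}^2 \leq \frac{k^2 V_{\max}^2 C_P^2 H^2}{A_{\min}}\|A^{1/2}\nabla v\|_{L^2(T)}^2\, .
\end{equation*}
Assumption~\ref{small mesh} states $H \leq A_{\min}^{1/2}/(\sqrt{2} C_P V_{\max} k)$, equivalently $k^2 V_{\max}^2 C_P^2 H^2/A_{\min} \leq 1/2$, so the negative term is bounded by $\tfrac12\|A^{1/2}\nabla v\|_{L^2(T)}^2$. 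Therefore $\Re a_T(v,v) \geq \|A^{1/2}\nabla v\|_{L^2(T)}^2 - \tfrac12\|A^{1/2}\nabla v\|_{L^2(T)}^2 = \tfrac12\|A^{1/2}\nabla v\|_{L^2(T)}^2$, which is the claim.

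This argument is essentially routine; there is no real obstacle, only two small points to be careful about. First, one must confirm that the Poincar\'e inequality in Proposition~\ref{prop: poincare inequality} is stated precisely for functions vanishing on one edge of $T$ (it is), so it applies verbatim here. Second, one should note that this coercivity on the subspace of functions vanishing on an edge is exactly what is needed for well-posedness of both local problems in \eqref{eqn:Helmholtz decomposed}: for $u_T^{\sfb}$ the trial and test space is $\{v\in\cH(T): v=0 \text{ on } \partial T\setminus(\Gamma_N\cup\Gamma_R)\}$, on which coercivity plus the continuity estimate \eqref{eqn: continuity estimate} (restricted to $T$) gives existence and uniqueness via Lax--Milgram; for $u_T^{\sfh}$ one subtracts a lifting of the boundary data and solves the resulting homogeneous-on-$\partial T\setminus(\Gamma_N\cup\Gamma_R)$ problem the same way. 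I would state the proof of the proposition itself in just the few lines above and, if desired, append a remark that well-posedness of \eqref{eqn:Helmholtz decomposed} follows.
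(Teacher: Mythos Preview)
Your proof is correct and follows essentially the same approach as the paper: compute $\Re a_T(v,v)$, note the boundary term is purely imaginary, and absorb $k^2\|Vv\|_{L^2(T)}^2$ into half the gradient term via the Poincar\'e inequality and Assumption~\ref{small mesh}. The paper's version is terser (it writes the real part directly without discussing the boundary term), but the argument is identical.
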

\begin{proof}
Using the Poincar\'e inequality \eqref{Poincaree} and Assumption \ref{small mesh}, we get
\begin{equation}
\label{poinmath}
\begin{aligned}
   \Re a_T(v,v)&=\|A^{1/2}\nabla v\|_{L^2(T)}^2- \|kVv\|^2_{L^2(T)} \\
   &\geq (1-C_P^2H^2k^2V_{\max}^2A_{\min}^{-1})\|A^{1/2}\nabla v\|_{L^2(T)}^2 
   \geq \frac{1}{2} \|A^{1/2}\nabla v\|_{L^2(T)}^2 \, .
  \end{aligned}
\end{equation}
\end{proof}
Since both equations in \eqref{eqn:Helmholtz decomposed} contain Dirichlet's boundary condition on at least one of the edges of $T$, the coercivity implied by Proposition \ref{prop-well-definedness-local-prob} suffices for the well-posedness. Consequently, the solutions $u^{\sfh}_T$ and $u^{\sfb}_T$ are well-defined.
\begin{remark}
\label{rmk:orthogonality}
An important property is that $u^{\sfh}_T$ is ``left-orthogonal'' to $u^{\sfb}_T$ in $T$ with respect to the local sesquilinear form $a_T(\cdot,\cdot)$ in $T$, in the sense of $a_T(u^{\sfh}_T,u^{\sfb}_T)=0$, according to the weak form of the equation. Note that we might not have $a_T(u^{\sfb}_T,u^{\sfh}_T)=0$ for $T$ near the boundary (i.e., $\partial T \cap (\Gamma_N\cup\Gamma_R) \neq \emptyset$) due to the fact that $a_T(\cdot,\cdot)$ is not Hermitian here. 
\end{remark}

 \subsubsection{Global decomposition} 
 \label{sec-Global decomposition}
 In this subsection, we define a global decomposition $u=u^{\sfb}+u^{\sfh}$, such that for each $T$, it holds that $u^{\sfh}(x)=u^{\sfh}_T(x)$ and $u^{\sfb}(x)=u^{\sfb}_T(x)$ when $x \in T$. 
 Both $u^\sfh$ and $u^\sfb$ are well-defined and belong to $\mathcal{H}(\Omega)$ due to the continuity across edges. Here, the component $u^{\sfh}_T$ (resp. $u^{\sfh}$) is called the local (resp. global) \textit{Helmholtz-harmonic part} and $u^{\sfb}_T$ (resp. $u^{\sfb}$) is the local (resp. global) \textit{bubble part}, of the solution $u$. 
 
 We further introduce the function space for the Helmholtz-harmonic part 
 \begin{equation}
     \begin{aligned}
         V^\sfh:=\{v \in \cH(\Omega): &-\nabla \cdot (A \nabla v)-k^2V^2v=0 \text{ in each } T\in \cT_H, \\
        &A\nabla v\cdot\nu=T_kv, \text{ on } \Gamma_N \cup \Gamma_R \}\, , 
     \end{aligned}
 \end{equation}
so that $u^\sfh \in V^\sfh$, and the space for the bubble part
  \begin{equation}
     \begin{aligned}
         V^\sfb:=\{v \in \cH(\Omega): v = 0 \text{ on } E_H \} \, ,
     \end{aligned}
 \end{equation}
 such that $u^\sfb \in V^\sfb$. In this way, the solution space of \eqref{eqn:Helmholtz smooth k} can be decomposed to $V^\sfh + V^\sfb$. Furthermore, 
 for any $v \in V^{\sfh} $ and $w \in V^{\sfb}$, it holds that $a(v,w)=0$ by summing up local sesquilinear forms $a_T(\cdot,\cdot)$ and using Remark \ref{rmk:orthogonality}.
 
 We will treat $V^{\sfb}$ as the fine scale or microscopic space, and refer to $V^{\sfh}$ as the coarse scale or macroscopic space. The idea of our multiscale framework is to compute the two parts separately by exploring their own structures.
  
 In the next two subsections, we will study the computational properties of $u^\sfh \in V^\sfh$ and $u^\sfb \in V^\sfb$, respectively. These properties serve as the cornerstone of designing our multiscale algorithm.

\subsection{Local and Small Bubble Part}
\label{sec-small-bubble}
In this subsection, we analyze the bubble part $u^\sfb$. This part depends locally on $f$ in each $T$. Thus, it can be computed efficiently in a parallel manner. Moreover, it is small and can be ignored if the target accuracy is $O(H)$; see Proposition \ref{prop: bubble is small}. 
\begin{proposition}
\label{prop: bubble is small}
Under Assumption \ref{small mesh}, it holds that
\begin{equation}
\label{thm: estimate for bubble part, interior}
\left\|u^{\sfb}\right\|_{\mathcal{H}(\Omega)} \leq  \frac{3C_P}{A_{\min}^{1/2}}H\|f\|_{L^{2}(\Omega)}\, .
\end{equation}
\end{proposition}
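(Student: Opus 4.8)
The plan is to bound $u^{\sfb}$ element by element and then sum, exploiting the fact that $u^{\sfb}|_T = u^{\sfb}_T$ is exactly the local bubble in \eqref{eqn:Helmholtz decomposed}, driven by $f$ and with homogeneous Dirichlet data on $\partial T\setminus(\Gamma_N\cup\Gamma_R)$. Since each connected component of $\Omega$ is split into at least two elements, every $T$ shares an interior edge with a neighbour, so $u^{\sfb}_T$ vanishes on at least one edge of $T$; this is precisely the hypothesis needed to invoke both the coercivity estimate of Proposition \ref{prop-well-definedness-local-prob} and the Poincar\'e inequality of Proposition \ref{prop: poincare inequality}.

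First I would test the weak form of the bubble equation with $v=u^{\sfb}_T$, giving $a_T(u^{\sfb}_T,u^{\sfb}_T)=(f,u^{\sfb}_T)_T$. Taking real parts and using Proposition \ref{prop-well-definedness-local-prob} on the left (the non-Hermitian Robin term $-(T_k u^{\sfb}_T,u^{\sfb}_T)_{\partial T\cap\Gamma_R}$ is purely imaginary and drops out), together with Cauchy--Schwarz and then the Poincar\'e inequality $\|u^{\sfb}_T\|_{L^2(T)}\le C_P H\|\nabla u^{\sfb}_T\|_{L^2(T)}\le C_P H A_{\min}^{-1/2}\|A^{1/2}\nabla u^{\sfb}_T\|_{L^2(T)}$ on the right, yields
\[
\tfrac12\|A^{1/2}\nabla u^{\sfb}_T\|_{L^2(T)}^2\le C_P H A_{\min}^{-1/2}\|f\|_{L^2(T)}\,\|A^{1/2}\nabla u^{\sfb}_T\|_{L^2(T)},
\]
hence $\|A^{1/2}\nabla u^{\sfb}_T\|_{L^2(T)}\le 2C_P H A_{\min}^{-1/2}\|f\|_{L^2(T)}$. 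For the zeroth-order part of the $\cH(T)$-norm, I would apply Poincar\'e once more and then Assumption \ref{small mesh}, which gives $C_P^2H^2k^2V_{\max}^2A_{\min}^{-1}\le\tfrac12$, so that $k^2\|Vu^{\sfb}_T\|_{L^2(T)}^2\le\tfrac12\|A^{1/2}\nabla u^{\sfb}_T\|_{L^2(T)}^2$. Adding the two contributions gives $\|u^{\sfb}_T\|_{\cH(T)}^2\le\tfrac32\|A^{1/2}\nabla u^{\sfb}_T\|_{L^2(T)}^2\le 6C_P^2H^2A_{\min}^{-1}\|f\|_{L^2(T)}^2$.

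Finally I would sum over $T\in\cT_H$; since the elements are essentially disjoint and $u^{\sfb}$ agrees with $u^{\sfb}_T$ on $T$, one gets $\|u^{\sfb}\|_{\cH(\Omega)}^2=\sum_{T}\|u^{\sfb}_T\|_{\cH(T)}^2\le 6C_P^2H^2A_{\min}^{-1}\|f\|_{L^2(\Omega)}^2$, and $\sqrt6<3$ gives the stated bound. This argument is essentially routine bookkeeping once the pieces are in place; the only genuinely delicate points are checking that $u^{\sfb}_T$ always vanishes on an edge (so that the local coercivity and Poincar\'e estimates are applicable, including for boundary elements) and recognizing that the indefiniteness and non-self-adjointness of $a_T$ are harmless here, because the $k^2$ term is absorbed by Assumption \ref{small mesh} and the Robin term is killed by taking real parts. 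I do not anticipate any substantial obstacle beyond this.
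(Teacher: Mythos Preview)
Your proof is correct and follows essentially the same route as the paper: test the local bubble equation with $u^{\sfb}_T$, take real parts (dropping the purely imaginary Robin contribution), apply the coercivity from Proposition~\ref{prop-well-definedness-local-prob} together with the Poincar\'e inequality of Proposition~\ref{prop: poincare inequality}, and then sum over elements. The only cosmetic difference is that the paper bounds $\|u^{\sfb}_T\|_{\cH(T)}$ directly via the lower bound $\Re a_T(u^{\sfb}_T,u^{\sfb}_T)\ge\tfrac13\|u^{\sfb}_T\|_{\cH(T)}^2$, whereas you first bound $\|A^{1/2}\nabla u^{\sfb}_T\|_{L^2(T)}$ and then add the zeroth-order piece; your ordering actually yields the slightly sharper constant $\sqrt{6}$ in place of $3$.
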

\begin{proof}
By definition, inside each patch $T$, it holds that $a_T(u^{\sfb},u^{\sfb})=(f,u^{\sfb})_{T}$. The coercivity estimate in  \eqref{poinmath} implies the inequality $\|kVu^{\sfb}\|^2_{L^2(T)} \leq \frac{1}{2} \|A^{1/2}\nabla u^{\sfb}\|_{L^2(T)}^2$. Using the estimate, we get 
\begin{equation*}
\begin{aligned}
    \Re a_T(u^{\sfb},u^{\sfb}) &=\|A^{1/2}\nabla u^{\sfb}\|_{L^2(T)}^2- \|kVu^{\sfb}\|^2_{L^2(T)}    \\
    &\geq \frac{1}{3} (\|A^{1/2}\nabla u^{\sfb}\|_{L^2(T)}^2+\|kVu^{\sfb}\|^2_{L^2(T)})= \frac{1}{3} \|u^{\sfb}\|^2_{\mathcal{H}(T)}\, .
\end{aligned}
\end{equation*}
Combining the above estimate with the Cauchy-Schwarz inequality, we arrive at
\[\|u^{\sfb}\|^2_{\mathcal{H}(T)}\leq 3\Re a_T(u^{\sfb},u^{\sfb}) = 3(f,u^{\sfb})_{T}\leq 3\|f\|_{L^2(T)}\|u^{\sfb}\|_{L^2(T)}\,.\]
Meanwhile, by the Poincar\'e inequality \eqref{Poincaree}, we get  \[\|u^{\sfb}\|_{L^2(T)}\leq C_P H\|\nabla u^{\sfb}\|_{L^2(T)}\leq \frac{C_P H}{ A_{\min}^{1/2}} \|u^{\sfb}\|_{\mathcal{H}(T)}\, .\] 
Combining all the above inequalities gives $\|u^{\sfb}\|_{\mathcal{H}(T)}\leq 3(C_P H/ A_{\min}^{1/2})\|f\|_{L^2(T)}$ for each element $T$. Summing them up for all elements $T$ yields the desired conclusion.
\end{proof}

\subsection{Low Complexity of the Helmholtz-Harmonic Part}
\label{sec-low-complexity-harmonic-part}
Now, we turn to the study of the Helmholtz-harmonic part $u^\sfh$. The goal is to show that $u^\sfh$ can be approximated via local basis functions in an exponentially efficient manner. 
To achieve this, our approximation framework\footnote{It is similar to that in our previous work for elliptic equations \cite{chen2020exponential}.} contains three steps: (1) reducing the approximation of $u^\sfh$ to that of edge functions in Subsection \ref{sec-Approximation via Edge Functions}; (2) localizing the approximation to every single edge in Subsection \ref{sec-Localization of Approximation}; and (3) realizing local approximation via oversampling and SVD in Subsection \ref{oversampling exp}. Combining all these three steps, we establish the low complexity in approximation of $u^\sfh$ in Subsection \ref{subsec:Low Complexity in Approximation}.

\subsubsection{Approximation via Edge Functions}
\label{sec-Approximation via Edge Functions}
We start with the first step of approximating $u^\sfh$. By definition, $u^\sfh$ belongs to $V^{\sfh}$. A key observation is that any function in $V^{\sfh}$ is determined entirely by its value on the edge set $E_H$. Thus, define  \[\tilde{V}^{\sfh}:=\{\tilde{\psi}: E_H \to \bR, \text{ there exists a function } \psi \in V^{\sfh}, \text{ such that } \tilde{\psi}=\psi|_{E_H} \}\, ;\] 
then \yc{under Assumption \ref{small mesh}}, there is a one to one correspondence $\tilde{\psi} \in \tilde{V}^{\sfh} \leftrightarrow \psi \in V^{\sfh}$. More precisely, in each $T$, it holds that
  \begin{equation}
    \label{eqn: edge bulk correspondence} 
    \left\{
    \begin{aligned}
    -\nabla \cdot (A \nabla \psi)-k^2V^2\psi&=0, \quad \text{in} \  T\\
    \psi&=\tilde{\psi}, \quad \text{on} \  \partial T\setminus (\Gamma_N \cup \Gamma_R)\\
    A\nabla \psi\cdot\nu&=T_k \psi, \ \text{on} \ \partial T \cap (\Gamma_N \cup \Gamma_R)\, .
    \end{aligned}
    \right.
    \end{equation}
    Indeed, we have $\tilde{V}^{\sfh} = H^{1/2}(E_H)$ by the trace theory {since the local equation is elliptic}. Using the above identification, approximating $u^\sfh$ corresponds to approximating $\tilde{u}^\sfh$, which is a function defined on edges and of lower complexity. {We need to pay attention to the norm we use when approximating $\tilde{u}^\sfh$ so that we can use the error bound of the approximation to control the  error of $u^\sfh$ in the energy norm. This will be the focus of the next section.}
    \begin{remark}
    In the remaining part of the article, we will frequently use the correspondence between $V^\sfh$ and $\tilde{V}^\sfh$. Conventionally, when we write a tilde on the top of a function in $V^\sfh$, it refers to its corresponding part in $\tilde{V}^\sfh$.
    \end{remark}
\subsubsection{Localization of Approximation}
\label{sec-Localization of Approximation} 
We discuss how to approximate the edge function $\tilde{u}^\sfh$, whose domain is $E_H$, which is nonlocal. Since it is often preferable to have localized basis functions for approximation and numerical algorithms, our second step is to localize the task of approximating $\tilde{u}^\sfh$ to every single edge.

To achieve localization, we study the geometry of the edge set $E_H$ first. Observing that different edges only communicate with each other along their shared nodes, we can use nodal interpolation to localize the approximation. More precisely, we proceed with the following steps:
\begin{enumerate}
    \item Interpolation: for each node $x_i\in \cN_H$, choose $\tilde{\psi}_i$ to be the piecewise linear tent function on $E_H$, satisfying $\tilde{\psi}_i(x_j)=\delta_{ij}$ for each $x_j \in \cN_H$. This defines an interpolation operator for $v\in V^{\sfh} \cap C(\overline{\Omega})$:  \[I_H {v}:=\sum_{x_i \in \cN_H} {v}(x_i){\psi}_i(x)\, .\]
    Note that ${\psi}_i(x)$ is the same as the basis function constructed via the multiscale finite element method (MsFEM \cite{hou_multiscale_1997}). The interpolation residual $v-I_H v$ vanishes on each $x_i \in \cN_H$. Set\footnote{Note that we can apply $I_H$ to $\tilde{u}^\sfh$ due to the $C^{\alpha}$ estimate of $u$ in Proposition \ref{prop: C alpha estimate}.} $v = \tilde{u}^\sfh$ and let $I_H\tilde{u}^{\sfh}$ be one part of the approximation for $\tilde{u}^\sfh$.  Then, it remains to approximate the residue $\tilde{u}^{\sfh}-I_H\tilde{u}^{\sfh}$. 
    \item Localization: we wish to explore the fact that $\tilde{u}^{\sfh}-I_H\tilde{u}^{\sfh}$ vanishes on nodes to localize the subsequent approximation task. To achieve so, define  $R_e\tilde{u}^{\sfh}=P_e(\tilde{u}^{\sfh}-I_H\tilde{u}^{\sfh}):=(\tilde{u}^{\sfh}-I_H\tilde{u}^{\sfh})|_e$. The goal is to find some basis functions on each $e$ to approximate $R_e\tilde{u}^{\sfh}$. To make this problem  precise, we need to specify the function space of $R_e\tilde{u}^{\sfh}$, and the norm for approximation. 

    It turns out that the natural function space $R_e\tilde{u}^{\sfh}$ is the Lions-Magenes space; see the following Proposition \ref{prop: C alpha interpolation residue}. 
\end{enumerate}

\begin{proposition}
    \label{prop: C alpha interpolation residue}
    Let $d=2$. Suppose  $f \in L^2(\Omega)$ and \eqref{eqn: stability} holds. For each $e \in \cE_H$, it holds that $R_e\tilde{u}^{\sfh} \in H_{00}^{1/2}(e)$, the Lions-Magenes space which contains functions $v\in H^{1/2}(e)$ such that 
      \[\frac{v(x)}{\dist(x,\partial e)} \in L^2(e)\, . \]
      Here $\dist(x,\partial e)$ is the Euclidean distance from $x$ to the boundary of $e$.
    \end{proposition}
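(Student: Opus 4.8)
The plan is to show that $R_e\tilde{u}^{\sfh}$, which is the restriction to the edge $e$ of the interpolation residue $\tilde{u}^{\sfh}-I_H\tilde{u}^{\sfh}$, lies in $H^{1/2}_{00}(e)$. The membership in $H^{1/2}(e)$ is essentially free: by Proposition~\ref{prop: C alpha estimate} the solution $u$ is $C^\alpha(\Omega)$, so the nodal interpolation $I_H\tilde{u}^{\sfh}$ makes sense, and since $\tilde{u}^{\sfh}\in \tilde{V}^{\sfh}=H^{1/2}(E_H)$ while $I_H\tilde{u}^{\sfh}$ is piecewise linear on the edges (hence also in $H^{1/2}$ of each edge), the difference restricted to $e$ is in $H^{1/2}(e)$. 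So the real content is the weighted-$L^2$ condition $v(x)/\dist(x,\partial e)\in L^2(e)$ near the two endpoints of $e$. By a standard characterization of the Lions--Magenes space, it is equivalent to show that the zero extension of $v:=R_e\tilde{u}^{\sfh}$ from $e$ to a slightly larger edge (or to all of $E_H$, or to $\bR$) remains in $H^{1/2}$; this is the formulation I would actually verify.

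First I would reduce to a local statement near an endpoint $x_i\in\cN_H$ of $e$. Since $v$ vanishes at $x_i$ (the interpolation residue is zero at every node) and $v\in H^{1/2}(e)$, and moreover $v$ is $C^\alpha$ up to $x_i$ (because $u^{\sfh}$ is $C^\alpha$ and $I_H\tilde{u}^{\sfh}$ is Lipschitz on $e$), I would show the zero extension across $x_i$ along the line containing $e$ stays in $H^{1/2}$ in a neighborhood of $x_i$. The key point is that $\tilde{u}^{\sfh}$, restricted to the union of \emph{all} edges meeting at $x_i$, is a single $H^{1/2}$ function on that one-dimensional ``star'' $\rN(x_i,\cE_H)$, and $I_H\tilde{u}^{\sfh}$ is continuous and piecewise linear on the same star. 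Hence $v$ is the restriction to the branch $e$ of a function $w:=\tilde{u}^{\sfh}-I_H\tilde{u}^{\sfh}$ that is globally $H^{1/2}$ on the star and vanishes at the center $x_i$. Extending $v$ by zero is then the same as keeping $w$ on the branch $e$ and replacing it by $0$ on the other branches; since $w$ already vanishes at the junction point and lies in $H^{1/2}$ of the whole star, this modification does not destroy the $H^{1/2}$ regularity near $x_i$ — intuitively, gluing a globally $H^{1/2}$ function with $0$ at a point where they agree produces another $H^{1/2}$ function, exactly the fact that fails in $H^1$ but holds in $H^{1/2}$ because $H^{1/2}$ does not ``see'' point values in a trace sense on a $1$-D domain. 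I would make this rigorous either via the Gagliardo seminorm (splitting the double integral over the branches and estimating the cross terms using that both pieces vanish at the junction and using a Hardy-type inequality), or by invoking the known equivalence $H^{1/2}_{00}(e)=\{v\in H^{1/2}(e): \tilde v\in H^{1/2}(\mathbb{R})\}$ together with the fact that $w\chi_e + 0\cdot\chi_{E_H\setminus e}$ agrees with a translate/rotation argument at $x_i$.

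Alternatively — and this may be the cleaner route to write up — I would use the two-sided characterization: $v\in H^{1/2}_{00}(e)$ iff $v\in H^{1/2}(e)$ and the Hardy inequality $\int_e |v(x)|^2/\dist(x,\partial e)^{\,2}\,dx<\infty$ holds. Near $x_i$ write $\dist(x,\partial e)\asymp |x-x_i|$, and use that $v(x_i)=0$ together with $v\in C^\alpha$: this gives $|v(x)|\le C|x-x_i|^\alpha$, which makes $|v(x)|^2/|x-x_i|^2\le C|x-x_i|^{2\alpha-2}$ — but this is only integrable for $\alpha>1/2$, so the crude $C^\alpha$ bound is not enough and one genuinely needs the $H^{1/2}$ information. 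The correct argument is that the Hardy inequality $\int_0^\delta |v(x)|^2/x^2\,dx \le C\,|v|_{H^{1/2}(0,\delta)}^2$ is precisely valid for functions $v$ vanishing at $0$ — this is the classical Hardy inequality adapted to the fractional order $1/2$, which is the borderline case where vanishing at the endpoint upgrades $H^{1/2}(e)$ membership to $H^{1/2}_{00}(e)$ membership \emph{provided} $v$ genuinely vanishes at the endpoint, not merely in an $L^2$-trace sense but pointwise, which here is guaranteed by $C^\alpha$ continuity. So the skeleton is: (i) $v\in H^{1/2}(e)$ from the trace theory and piecewise-linearity of $I_H$; (ii) $v$ is continuous up to $\partial e$ and $v(x_i)=0$ at each endpoint $x_i$, from Proposition~\ref{prop: C alpha estimate}; (iii) apply the fractional Hardy inequality at each endpoint to conclude $\int_e|v|^2/\dist(\cdot,\partial e)^2<\infty$, i.e.\ $v\in H^{1/2}_{00}(e)$.

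The main obstacle is step (iii): one must be careful that $H^{1/2}(e)$ membership alone does \emph{not} imply the Hardy bound — the extra ingredient is the pointwise vanishing at the endpoints, and the subtlety is pinning down exactly why vanishing pointwise (as opposed to, say, having a small average near the endpoint) suffices at the critical exponent $s=1/2$. I would handle this by recalling the standard fact that for $v\in H^{1/2}(0,\delta)$ with $v(0)=0$ in the sense of continuous representatives, $\int_0^\delta |v(x)/x|^2\,dx\lesssim |v|^2_{H^{1/2}(0,\delta)} + \|v\|^2_{L^2(0,\delta)}$; equivalently, the zero extension $\tilde v$ to $\bR$ has $|\tilde v|_{H^{1/2}(\bR)}<\infty$. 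Everything else — the reduction to endpoints, the $H^{1/2}$ regularity of $\tilde u^{\sfh}$ on the star of edges, and the Lipschitz regularity of $I_H\tilde u^{\sfh}$ — is routine given the results already established in the excerpt.
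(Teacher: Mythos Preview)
Your overall strategy matches the paper's: establish $R_e\tilde{u}^{\sfh}\in H^{1/2}(e)\cap C^\alpha(e)$ with vanishing endpoints (via trace theory and Proposition~\ref{prop: C alpha estimate}), then deduce $H^{1/2}_{00}(e)$ membership. The paper defers that last implication to Proposition~2.1 of \cite{chen2020exponential}. Your attempt to actually carry it out, however, contains a real gap.

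The ``standard fact'' you invoke in step~(iii), namely $\int_0^\delta |v(x)/x|^2\,dx\lesssim \|v\|_{H^{1/2}}^2$ for continuous $v\in H^{1/2}$ with $v(0)=0$, is false. If it held with a uniform constant, then since $C_c^\infty(0,\delta)$ is dense in $H^{1/2}(0,\delta)$ and every such function satisfies the hypothesis, Fatou's lemma would extend the bound to all of $H^{1/2}$, contradicting the strict inclusion $H^{1/2}_{00}\subsetneq H^{1/2}$. The exponent $s=1/2$ is precisely the borderline where the fractional Hardy inequality fails, which is exactly why the Lions--Magenes space is a nontrivial proper subspace. Your first approach has the same issue in disguise: the claim that ``gluing a globally $H^{1/2}$ function with $0$ at a point where they agree produces another $H^{1/2}$ function'' is equivalent to continuity plus pointwise vanishing implying $H^{1/2}_{00}$ at that endpoint, and is not true in general.

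The fix is to use the full H\"older modulus, not mere continuity, together with the correct weight. Computing the Gagliardo seminorm of the zero extension shows that the Lions--Magenes condition reads $\int_e |v(x)|^2/\dist(x,\partial e)\,dx<\infty$, i.e.\ weight $\dist^{-1}$ rather than $\dist^{-2}$ (the exponent in the statement appears to be a slip). With this weight, the bound $|v(x)|\le C|x-x_i|^\alpha$ coming from $C^\alpha$ regularity and $v(x_i)=0$ gives $|v(x)|^2/\dist(x,\partial e)\le C^2|x-x_i|^{2\alpha-1}$, which is integrable for every $\alpha>0$. So once you have $H^{1/2}$ from traces and $C^\alpha$ with vanishing endpoints from Proposition~\ref{prop: C alpha estimate}, the weighted condition follows directly from the H\"older bound---no Hardy inequality is needed, and this is presumably the content of the cited proposition.
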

    It might seem unclear at this stage why we should consider such a complicated function space. In fact, this is related to the zero extension of functions. According to Chapter 33 of \cite{tartar2007introduction}, $H_{00}^{1/2}(e)$ can also be characterized as the space of functions in $H^{1/2}(e)$, such that their zero extensions to $E_H$ is still in $H^{1/2}(E_H)$. This is the key and in fact the only property that we will use for $H_{00}^{1/2}(e)$. The zero extension allows us to connect local approximation and global approximation. In the following we will not distinguish $\tilde{\psi}\in H_{00}^{1/2}(e)$ and its zero extension to $E_H$ that belongs to $H^{1/2}(E_H)$.
    
    For any function in $ H_{00}^{1/2}(e)$, we define a norm to measure approximation accuracy.
\begin{definition}
    \label{def: H 1/2 norm}
      Let $d=2$. The $\cH^{1/2}(e)$ norm of a function $\tilde{\psi}\in H_{00}^{1/2}(e)$ is defined as:
    \begin{equation}
    \label{eqn-def-H-edge-norm}
    \|\tilde{\psi}\|_{\cH^{1/2}(e)}^2:=\int_{\Omega}A|\nabla \psi |^2+k^2|V\psi|^2\, ,
    \end{equation}
    where we have used the one to one correspondence $\tilde{\psi} \in \tilde{V}^{\sfh} \leftrightarrow \psi \in V^{\sfh}$. \yc{Here we identify $\tilde{\psi}$ as the zero extension of its value on the edge $e$ to $E_H$}. 
    \end{definition}
The $\cH^{1/2}(e)$ norm in Definition \ref{def: H 1/2 norm} is the natural one to consider here since eventually, we aim for approximation accuracy in the energy norm. 

The following theorem is the cornerstone for the above localization strategy. It states that a local accuracy guarantee can be seamlessly coupled to form a global accuracy guarantee.
    \begin{theorem}[Global error estimate]
        \label{thm: Edge coupling error estimate}
        Let $d=2$. Suppose for each edge $e$, there exists an edge function $\tilde{v}_e \in H_{00}^{1/2}(e)$ that satisfies 
        \begin{equation}
        \label{eqn: local err estimate}
            \|R_e\tilde{u}^{\sfh}-\tilde{v}_e\|_{\cH^{1/2}(e)} \leq \epsilon_{e}\, .
        \end{equation}
      Let $v_e \in V^\sfh$ be the corresponding part of $\tilde{v}_e \in \tilde{V}^\sfh$. Then, it holds that
        \begin{equation}
        \|u^{\sfh}-I_Hu^{\sfh}-\sum_{e \in \cE_H} v_e\|^2_{\cH(\Omega)}\leq C_{\mathrm{mesh}}\sum_{e \in \cE_H} \epsilon_{e}^2\, ,    
        \end{equation}
        where $C_{\mathrm{mesh}}$ is a constant depending on the number of edges for the elements only, e.g., for quadrilateral mesh $C_{\mathrm{mesh}}=4$.
    \end{theorem}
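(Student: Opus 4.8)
The plan is to reduce the global error to a single linear object — the Helmholtz--harmonic extension — and then close the estimate with a finite‑overlap counting argument, exactly as in the elliptic case \cite{chen2020exponential}; the Helmholtz structure (indefiniteness, non‑Hermiticity) plays no role here. First I would analyze the interpolation residual $w:=\tilde u^{\sfh}-I_H\tilde u^{\sfh}\in H^{1/2}(E_H)$. By construction $w$ vanishes at every node $x_i\in\cN_H$, so on each edge $e$ we have $w|_e=R_e\tilde u^{\sfh}\in H_{00}^{1/2}(e)$ by Proposition \ref{prop: C alpha interpolation residue} (here $I_H\tilde u^{\sfh}$ is well defined thanks to the $C^{\alpha}$ regularity in Proposition \ref{prop: C alpha estimate}). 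Using the zero‑extension characterization of $H_{00}^{1/2}(e)$ recalled after Proposition \ref{prop: C alpha interpolation residue} and the fact that distinct edges meet only at nodes, one gets the identity $w=\sum_{e\in\cE_H}R_e\tilde u^{\sfh}$ in $H^{1/2}(E_H)$, where each $R_e\tilde u^{\sfh}$ is identified with its zero extension to $E_H$; subtracting $\sum_e\tilde v_e$ (also zero‑extended) yields $w-\sum_e\tilde v_e=\sum_{e\in\cE_H}\bigl(R_e\tilde u^{\sfh}-\tilde v_e\bigr)$.

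Next I would transport this identity to $V^{\sfh}$ via the one‑to‑one, \emph{linear} correspondence $\tilde V^{\sfh}\leftrightarrow V^{\sfh}$ (well defined under Assumption \ref{small mesh}), whose inverse $\Pi$ solves \eqref{eqn: edge bulk correspondence} elementwise. Since each MsFEM basis function $\psi_i$ lies in $V^{\sfh}$ with $\psi_i|_{E_H}=\tilde\psi_i$, we have $I_Hu^{\sfh}=\Pi(I_H\tilde u^{\sfh})$; also $u^{\sfh}=\Pi(\tilde u^{\sfh})$ and $v_e=\Pi(\tilde v_e)$. Applying the linear map $\Pi$ to the identity of the previous paragraph gives
\begin{equation*}
u^{\sfh}-I_Hu^{\sfh}-\sum_{e\in\cE_H}v_e=\sum_{e\in\cE_H}G_e,\qquad G_e:=\Pi\bigl(R_e\tilde u^{\sfh}-\tilde v_e\bigr)\in V^{\sfh}.
\end{equation*}
I would then record two facts about $G_e$: (i) its edge data is supported on $e$ alone, hence by well‑posedness of the local problems (Proposition \ref{prop-well-definedness-local-prob}) $G_e$ vanishes identically outside $\rN(e,\cT_H)$; and (ii) by Definition \ref{def: H 1/2 norm}, $\|G_e\|_{\cH(\Omega)}=\|R_e\tilde u^{\sfh}-\tilde v_e\|_{\cH^{1/2}(e)}\le\epsilon_e$.

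Finally comes the overlap estimate. Writing $m_T$ for the number of edges of $T$ and $m:=\max_{T\in\cT_H}m_T$ ($m=4$ for quadrilaterals, $m=3$ for triangles), and using additivity of $\|\cdot\|_{\cH(\cdot)}^2$ over elements together with fact (i) that on each $T$ only the $G_e$ with $e\sim T$ are nonzero,
\begin{equation*}
\Bigl\|\sum_{e\in\cE_H}G_e\Bigr\|_{\cH(\Omega)}^2=\sum_{T\in\cT_H}\Bigl\|\sum_{e\sim T}G_e\Bigr\|_{\cH(T)}^2\le\sum_{T\in\cT_H}m_T\sum_{e\sim T}\|G_e\|_{\cH(T)}^2\le m\sum_{e\in\cE_H}\sum_{T\sim e}\|G_e\|_{\cH(T)}^2=m\sum_{e\in\cE_H}\|G_e\|_{\cH(\Omega)}^2,
\end{equation*}
where the middle step is $\bigl|\sum_{j=1}^{m_T}a_j\bigr|^2\le m_T\sum_j|a_j|^2$ and the last equality uses (i) once more. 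Combining with fact (ii) gives the claim with $C_{\mathrm{mesh}}=m$, in particular $C_{\mathrm{mesh}}=4$ for quadrilateral meshes.

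I do not expect a genuine obstacle — the argument is essentially bookkeeping — but the two points that must be handled with care are: (a) the compatibility of the zero‑extension convention at the shared nodes, so that $w=\sum_eR_e\tilde u^{\sfh}$ is a valid $H^{1/2}(E_H)$ identity (this is precisely where $H_{00}^{1/2}(e)$ and Proposition \ref{prop: C alpha interpolation residue} are indispensable), and (b) the linearity of $\Pi$ together with the exact equality $\|\Pi(\tilde\psi)\|_{\cH(\Omega)}=\|\tilde\psi\|_{\cH^{1/2}(e)}$, which is what keeps the constant clean and equal to $\max_T m_T$; both are immediate consequences of material already established.
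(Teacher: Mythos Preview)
Your proposal is correct and follows essentially the same route as the paper: decompose the energy norm elementwise, observe that on each $T$ only the edge contributions with $e\sim T$ survive, apply the discrete Cauchy--Schwarz inequality $|\sum_j a_j|^2\le m_T\sum_j|a_j|^2$ on each element, then swap the order of summation and use $\sum_{T\sim e}\|G_e\|_{\cH(T)}^2=\|G_e\|_{\cH(\Omega)}^2$. The paper phrases the intermediate step via a local norm $\|\cdot\|_{\cH_T^{1/2}(e)}$ rather than your global object $G_e$, but the two are identical in content and yield the same constant $C_{\mathrm{mesh}}=\max_T m_T$.
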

    
    Given this theorem, to approximate $u^{\sfh}$ it suffices to find local edge basis functions that satisfy \eqref{eqn: local err estimate} for some desired $\epsilon_e$. This is a localized task for each $e$.
    
    The proofs for Propositions \ref{prop: C alpha interpolation residue} and Theorem \ref{thm: Edge coupling error estimate} are similar to that in the setting of elliptic equations \cite{chen2020exponential}. However, for completeness, we will also present them here in Subsections \ref{sec: Proof of Proposition prop: C alpha interpolation residue} and \ref{sec:Proof of Theorem thm: Edge coupling error estimate}.
\subsubsection{Local Approximation via Oversampling}
     \label{oversampling exp}
     The last step of approximation is to find local edge basis functions for each $e$ so that \eqref{eqn: local err estimate} is satisfied.
     In this subsection, we discuss how to achieve this via oversampling and SVD, which can yield exponentially decaying $\epsilon_e$.  The general idea is to explore the fact that for a coarse scale function, its behavior on $e$ can be controlled very well by that in an oversampling domain due to the compactness property of the restriction operator.
     
   More precisely, for a given edge $e$, consider an oversampling domain $\omega_e$ associated with the edge. In general, any domain containing $e$ in the interior can serve as a candidate. Here, for simplicity of presentation and as an illustrative example, we set 
\begin{equation}
    \label{eqn: os domain 1 layer}
        \omega_e=\overline{\bigcup \{T\in \cT_H: \overline{T} \cap e \neq \emptyset\}}\, .
    \end{equation} 
   For interior edges and edges connected to the boundary, an illustration of this choice \eqref{eqn: os domain 1 layer} for a quadrilateral mesh is given in Figure \ref{fig:os domain}. 
         \begin{figure}[ht]
        \centering
\tikzset{every picture/.style={line width=0.75pt}} %set default line width to 0.75pt    
\begin{tikzpicture}[x=0.75pt,y=0.75pt,yscale=-1,xscale=1]
%uncomment if require: \path (0,236); %set diagram left start at 0, and has height of 236

%Shape: Grid [id:dp4502413607423936] 
\draw  [draw opacity=0][dash pattern={on 4.5pt off 4.5pt}] (60,64) -- (201.5,64) -- (201.5,163) -- (60,163) -- cycle ; \draw  [dash pattern={on 4.5pt off 4.5pt}] (70,64) -- (70,163)(109,64) -- (109,163)(148,64) -- (148,163)(187,64) -- (187,163) ; \draw  [dash pattern={on 4.5pt off 4.5pt}] (60,74) -- (201.5,74)(60,113) -- (201.5,113)(60,152) -- (201.5,152) ; \draw  [dash pattern={on 4.5pt off 4.5pt}]  ;
%Straight Lines [id:da8503725141214521] 
\draw    (109,113) -- (148,113) ;
%Straight Lines [id:da21825982078459039] 
\draw    (70,74) -- (187,74) ;
%Straight Lines [id:da29433241828016854] 
\draw    (187,74) -- (187,152) -- (70,152) -- (70,74) ;
%Shape: Grid [id:dp9605623982728824] 
\draw  [draw opacity=0][dash pattern={on 4.5pt off 4.5pt}] (274,71) -- (399.5,71) -- (399.5,162) -- (274,162) -- cycle ; \draw  [dash pattern={on 4.5pt off 4.5pt}] (274,71) -- (274,162)(313,71) -- (313,162)(352,71) -- (352,162)(391,71) -- (391,162) ; \draw  [dash pattern={on 4.5pt off 4.5pt}] (274,71) -- (399.5,71)(274,110) -- (399.5,110)(274,149) -- (399.5,149) ; \draw  [dash pattern={on 4.5pt off 4.5pt}]  ;
%Straight Lines [id:da6107855326361947] 
\draw    (313,71) -- (313,110) ;
%Straight Lines [id:da9454074405226199] 
\draw    (274,71) -- (274,149) -- (352,149) -- (352,71) -- cycle ;

% Text Node
\draw (124,102) node [anchor=north west][inner sep=0.75pt]   [align=left] {$\displaystyle e$};
% Text Node
\draw (166,76) node [anchor=north west][inner sep=0.75pt]   [align=left] {$\displaystyle \omega _{e}$};
% Text Node
\draw (88,171) node [anchor=north west][inner sep=0.75pt]   [align=left] {interior edge};
% Text Node
\draw (253,171) node [anchor=north west][inner sep=0.75pt]   [align=left] {edge connected to boundary};
% Text Node
\draw (314,89) node [anchor=north west][inner sep=0.75pt]   [align=left] {$\displaystyle e$};
% Text Node
\draw (276,130) node [anchor=north west][inner sep=0.75pt]   [align=left] {$\displaystyle \omega _{e}$};
\end{tikzpicture}
        \caption{Illustration of oversampling domains. On the right, we use an edge connected to the upper boundary as an illustrating example.}
        \label{fig:os domain}
    \end{figure}
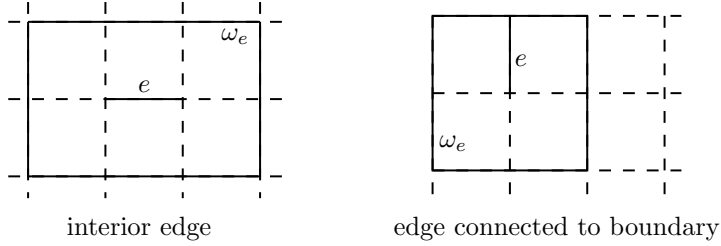

The key idea is to treat the residue $R_e\tilde{u}^{\sfh}$ as a restriction of a coarse scale function in $\omega_e$ and explore the compactness property of such restriction operators. {By an abuse of notation via the correspondence of $V^{\sfh}$ and $\tilde{V}^{\sfh}$, for any $\cH(\Omega)$ in $V$, we identify $R_e v$ as $R_e \tilde{v}^{\sfh}$}. As a first step, we write
   \begin{equation}
  \label{eqn: residue os decompose}
    R_e\tilde{u}^{\sfh}=R_e u=R_eu_{\omega_e}^{\sfh}+ R_eu_{\omega_e}^{\sfb}\, ,  
  \end{equation}
 where we decompose $u$ in $\omega_e$ into its coarse and fine scale components, via \eqref{eqn:Helmholtz decomposed} with $T$ replaced by $\omega_e$, and we shall use $u_{\omega_e}^\sfh$ and $u_{\omega_e}^\sfb$ to denote the corresponding local Helmholtz-harmonic and bubble part respectively.    
 Then, to approximate $R_e\tilde{u}^{\sfh}$, we could approximate the two terms in \eqref{eqn: residue os decompose} separately. We will show that the first term can be approximated in an exponentially efficient manner due to a compactness property, and the second term can be computed locally and is very small.
 
   \begin{remark}
   One may ask whether the decomposition \eqref{eqn: residue os decompose} in the oversampling domain is still well-defined. Indeed, similar to \eqref{Poincaree}, we have a uniform Poincar\'e inequality for every $\omega_e$: for any edge $e$ and $H^1(\omega_e)$ function $v$ vanishing on any one of the edge boundaries of $\omega_e$, it holds that
    \begin{equation}
\label{Poincaree1} 
\|v\|_{L^2(\omega_e)} \leq C'_{P} H \|\nabla v\|_{L^2(\omega_e)}\, ,
\end{equation}
where $C'_P$ is a constant that only depends on $c_0,c_1, d$ and our choice of oversampling domain. For the particular choice \eqref{eqn: os domain 1 layer}, $C'_P$ is a constant multiple of $C_P$; without loss of generality we assume $C'_P\geq C_P$. Based on this observation, we will choose a small $H$ so that Assumption \ref{small mesh1} holds, which guarantees that local Helmholtz operators in the oversampling domain behave in a manner similar to that of elliptic case; this is similar to Proposition \ref{prop-well-definedness-local-prob}.
\begin{assumption}
\label{small mesh1}
The mesh size satisfies $H\leq{A}_{\min}^{1/2}/(\sqrt{2}C'_P V_{\max} k)$, where $C'_P$ is the constant in \eqref{Poincaree1}.
 \end{assumption}
\end{remark}
Note that Assumption \ref{small mesh1} implies Assumption \ref{small mesh}.  Now, we discuss in detail how to deal with the two terms in \eqref{eqn: residue os decompose}.
  \begin{enumerate}
      \item For the first term, we consider the following function space in $\omega_e$:
    \begin{equation}
    \label{eqn-Helmholtz-harmonic-space}
    \begin{aligned}
        U(\omega_e):= \{v \in \cH(\omega_e): &-\nabla \cdot (A \nabla v)-k^2V^2v=0, \text{ in } \omega_e\\
        % & v=0, \text{ on } \Gamma_D \cap \partial \omega_e, \\ 
        &A\nabla v\cdot\nu=T_kv, \text{ on } (\Gamma_N \cup \Gamma_R) \cap \partial \omega_e\} 
        \, .
    \end{aligned}
    \end{equation}
    {Functions in this space are fully determined by their trace on $\partial \omega_e\backslash (\Gamma_N \cup \Gamma_R)$.} By definition, $u_{\omega_e}^{\sfh}$ belongs to $U(\omega_e)$.  Under Assumption \ref{small mesh1}, $(U(\omega_e),\|\cdot\|_{\cH(\omega_e)})$ is a Hilbert space, since the Helmholtz operator in $\omega_e$ is elliptic. Then, by abuse of notation, consider the operator 
\[R_e: (U(\omega_e),\|\cdot\|_{\cH(\omega_e)}) \to (H_{00}^{1/2}(e),\|\cdot\|_{\cH^{1/2}(e)})\, ,\]
    such that $R_e v=P_e(v-I_Hv)$ for $v\in U(\omega_e)$. A critical property is that the singular values of $R_e$ decay nearly exponentially fast; see Theorem \ref{thm: svd exponential decay of Re}. Its proof is deferred to Subsection \ref{subsec: Proof of thm: svd exponential decay of Re}.
    \begin{theorem}
     \label{thm: svd exponential decay of Re}
     {Let $d$=2.} Under Assumption \ref{small mesh1}, the operator $R_{e}$ is compact for each $e \in \cE_H$. Denote the pairs of its left singular vectors and singular values by $\{\tilde{v}_{m,e},\lambda_{m,e} \}_{m \in \bN}$, where $\tilde{v}_{m,e} \in H_{00}^{1/2}(e)$ and the sequence $\{\lambda_{m,e}\}_{m \in \bN}$ is in a descending order. Then, for any $\epsilon>0$, it holds that
     \begin{equation}
     \label{eqn: upper bound of singular value}
         \lambda_{m,e}\leq C_{\epsilon}\exp\left(-m^{(\frac{1}{d+1}-\epsilon)}\right)\, ,
     \end{equation}
     where $C_{\epsilon}$ is a constant that is independent of $k, H$ and may depend on $\epsilon, d$ and the mesh parameters $c_0,c_1$.    \end{theorem}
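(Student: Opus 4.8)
The plan is to establish compactness and the near-exponential singular value decay of $R_e$ by adapting the Caccioppoli-type argument from the elliptic case \cite{chen2020exponential}, with the crucial new ingredient that, under Assumption \ref{small mesh1}, the local Helmholtz operator on $\omega_e$ is \emph{elliptic} in the sense of Proposition \ref{prop-well-definedness-local-prob} (with $C_P$ replaced by $C'_P$), so that $(U(\omega_e),\|\cdot\|_{\cH(\omega_e)})$ is genuinely a Hilbert space and all the comparison estimates carry constants independent of $k$ and $H$. First I would fix a nested sequence of oversampling domains $\omega_e = \omega_e^{(0)} \subset \omega_e^{(1)} \subset \cdots \subset \omega_e^{(L)}$, where $\omega_e^{(\ell)}$ is obtained from $e$ by adding $\ell$ extra layers of elements, and prove the key \emph{Caccioppoli inequality}: for $v \in U(\omega_e^{(\ell+1)})$ that is Helmholtz-harmonic on the larger domain, $\|v\|_{\cH(\omega_e^{(\ell)})} \leq C \|v\|_{L^2(\omega_e^{(\ell+1)}\setminus\omega_e^{(\ell)})}/(\text{layer width})$ or a comparable estimate controlling the energy on the smaller domain by a weaker norm on the annulus. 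This is where Assumption \ref{small mesh1} does the work: the coercivity estimate $\Re a_{\omega}(v,v) \geq \frac12\|A^{1/2}\nabla v\|^2$ lets one run the standard cutoff-function argument (multiply the equation by $\chi^2 \bar v$, integrate by parts, absorb the $k^2 V^2$ term using the Poincaré inequality \eqref{Poincaree1}) exactly as in the elliptic setting.

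Next I would iterate the Caccioppoli inequality across the $L$ layers to obtain a \emph{quantitative smallness gain}: the energy of a Helmholtz-harmonic function on $\omega_e$ is controlled by its energy on $\omega_e^{(L)}$ times a factor that decays geometrically in $L$, or more precisely, combining with an interpolation/Sobolev step, one shows that the restriction map from $U(\omega_e^{(L)})$ to $U(\omega_e)$ followed by $R_e$ is approximated to accuracy $\sim \rho^L$ (for some $\rho<1$ depending only on the mesh constants) by a finite-dimensional operator whose rank is controlled by the number of degrees of freedom needed to resolve boundary data on $\partial\omega_e^{(L)}$. Since $\partial\omega_e^{(L)}$ is a $(d-1)$-dimensional set discretized at the coarse scale blown up by $L$ layers, a standard covering/approximation-number bound gives that rank on the order of $L^{d-1}$ (in $d=2$, on the order of $L$). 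Then I would optimize: to make the $m$-th singular value small we balance the number $m \sim L^{d-1}$ of retained modes against the error $\rho^L$, giving $\lambda_{m,e} \lesssim \rho^{m^{1/(d-1)}}$ — but to get the sharper exponent $1/(d+1)$ claimed in \eqref{eqn: upper bound of singular value} (rather than $1/(d-1)$), I would instead follow the refined argument of \cite{babuska2011optimal,chen2020exponential}, where at each layer one does not retain all boundary modes but only a polynomially growing number, using the $C^\alpha$/$H^{1/2}$ local regularity (Lemma \ref{lemma: bound H 1/2 by energy norm}, referenced in the excerpt) to control the tail; the bookkeeping of "modes per layer" then yields total rank $\sim L^{d+1}$ against error $\rho^L$, hence $\lambda_{m,e} \lesssim \exp(-c\, m^{1/(d+1)})$, and absorbing the $\epsilon$ slack into the constant $C_\epsilon$ gives the stated bound. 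Compactness of $R_e$ follows immediately since it is a norm-limit of finite-rank operators, or directly from the fact that $U(\omega_e)$ embeds compactly into $H^{1/2}_{00}(e)$ via interior elliptic regularity (the trace on $e$, which sits in the interior of $\omega_e$, gains derivatives).

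\textbf{Main obstacle.} The chief difficulty, and the genuinely new part relative to \cite{chen2020exponential}, is handling the non-Hermitian, indefinite structure of $a_{\omega_e}(\cdot,\cdot)$: the Caccioppoli argument must be run on the \emph{complex} sesquilinear form, and one must be careful that the Robin boundary contribution $-(T_k v, v)_{\Gamma_R \cap \partial\omega_e}$ has a sign that cooperates (indeed $\Re(-ik\beta v, v) = 0$, so it contributes nothing to the real part — a small blessing), while the cutoff-commutator terms $\nabla\chi \cdot \nabla v$ produce cross terms that must be absorbed using Young's inequality \emph{together with} the $k^2 V^2$ absorption, and here the factor-of-$\tfrac12$ room in Proposition \ref{prop-well-definedness-local-prob} is exactly what makes the absorption possible with a constant independent of $k$. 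A second, more technical obstacle is making rigorous the claim "$\tilde{V}^\sfh = H^{1/2}(E_H)$ and $R_e v \in H^{1/2}_{00}(e)$ with norm-equivalence to $\|\cdot\|_{\cH^{1/2}(e)}$ uniformly in $k$" — i.e. that the map $\psi \mapsto \tilde\psi$ between the PDE-harmonic extension and its edge trace is a bounded isomorphism with $k$-independent constants on the relevant subspace — which again relies on the local ellipticity from Assumption \ref{small mesh1} and on Lemma \ref{lemma: bound H 1/2 by energy norm}. I would isolate these two points as separate lemmas before assembling the layer iteration, so that the final optimization is a clean counting argument.
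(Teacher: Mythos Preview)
Your proposal has the right high-level architecture --- Caccioppoli inequality plus iteration over nested domains, following \cite{babuska2011optimal,chen2020exponential} --- and you correctly identify that Assumption~\ref{small mesh1} is precisely what makes the local Helmholtz problem behave elliptically so that the cutoff argument goes through with $k$-independent constants. However, there is a genuine geometric gap in how you set up the nested domains, and this gap would prevent the argument from closing.

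You propose domains $\omega_e = \omega_e^{(0)} \subset \omega_e^{(1)} \subset \cdots \subset \omega_e^{(L)}$ obtained by \emph{adding mesh-element layers}. This cannot work for the operator $R_e$ as defined in the theorem: its domain is $U(\omega_e)$ for the \emph{fixed} one-layer oversampling patch $\omega_e$ of \eqref{eqn: os domain 1 layer}, and a generic $v \in U(\omega_e)$ is Helmholtz-harmonic only on $\omega_e$, with no extension to any larger $\omega_e^{(\ell)}$. Even if you reinterpret the nesting as sitting \emph{inside} $\omega_e$, adding discrete element layers gives at most a bounded number of them (since $\omega_e$ has fixed diameter $\sim 3H$), so $L$ cannot be taken large and the iteration stalls. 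The paper resolves this by working entirely inside $\omega_e$ with a chain $e \subset \omega = \omega_0 \subset \omega_1 \subset \cdots \subset \omega_N = \omega^* \subset \omega_e$ of \emph{continuously scaled concentric rectangles}, related by $\omega_j - m_e = (1+t)(\omega_{j-1} - m_e)$ with $t = l_1^{1/N} - 1$; this allows $N \to \infty$ with shrinking gap $t \sim 1/N$, and the Caccioppoli constant scales like $1/(tH)$ (Lemma~\ref{lemma-H-by-L2}).

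This geometric point also explains the exponent, which in your write-up is hand-waved (you first arrive at $1/(d-1)$ and then invoke a ``refined argument'' for $1/(d+1)$). In the paper, each of the $N$ steps combines a Caccioppoli gain (Lemma~\ref{lemma-H-by-L2}) with an $n$-dimensional $L^2$ approximation of rate $n^{-1/d}$ coming from the Neumann--Laplace spectrum (Lemma~\ref{lemma-L2-by-H}); the total rank is $m \sim nN$ and the error is $(C t^{-1} n^{-1/d})^N \sim (C N n^{-1/d})^N$. Optimizing $N \sim m^{q/(q+1)}$ with $q < 1/d$ gives $\exp(-m^{q/(q+1)})$, i.e.\ exponent $\frac{1}{d+1} - \epsilon$. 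There are no ``boundary modes on $\partial \omega_e^{(L)}$'' in the mechanism; the $n^{-1/d}$ rate is a bulk Kolmogorov-width estimate, not a trace count. Your observations about the Robin term ($\Re(T_k v,\eta^2 v)=0$) and about isolating Lemma~\ref{lemma: bound H 1/2 by energy norm} as a separate boundedness step are both on target and match the paper.
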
\yc{\begin{remark}
     As we can see from the proof, we actually show that \eqref{eqn: upper bound of singular value} still holds by setting $C_\epsilon$ to be $1$ and requiring for $m > N_\epsilon$ with $N_\epsilon$ depending on $k$ and $H$.  But we can also make the above inequality hold for all $m$ by introducing the constant $C_\epsilon$.     \end{remark}}
     We discuss the implication of this theorem. By definition of singular values, if we set $W_{m,e}=\mathrm{span}~\{\tilde{v}_{j,e}\}_{j=1}^{m-1}$, then Theorem \ref{thm: svd exponential decay of Re} implies that
    \begin{equation}
    \label{eqn-approximation-property-singular-values}
        \min_{\tilde{v}_{e} \in W_{m,e}} \|R_e v-\tilde{v}_{e}\|_{\cH^{1/2}(e)}\leq C_{\epsilon}\exp\left(-m^{(\frac{1}{d+1}-\epsilon)}\right)\|v\|_{\cH(\omega_e)}\, .
    \end{equation} Applying this result to $v=u^\sfh_{\omega_e} \in U(\omega_e)$ leads to
    \begin{equation}
    \label{eqn: os harmonic part approximation}
    \min_{\tilde{v}_e \in W_{m,e}} \|R_e u^\sfh_{\omega_e}-\tilde{v}_e\|_{\cH^{1/2}(e)}\leq C_{\epsilon}\exp\left(-m^{(\frac{1}{d+1}-\epsilon)}\right)\|u_{\omega_e}^{\sfh}\|_{\cH(\omega_e)}\, .    
    \end{equation}
    Thus, there is a nearly exponential efficiency in approximating the first term $R_e u^\sfh_{\omega_e}$.
    \item For the second term in \eqref{eqn: residue os decompose}, the oversampling bubble part $u^{\sfb}_{\omega_e}$ can be efficiently computed by solving local Helmholtz problems. Moreover, under Assumption \ref{small mesh1} this term is small in the $\cH(\Omega)$ norm as shown in the following proposition.
\begin{proposition}
\label{prop: small os bubble}
Under Assumption \ref{small mesh1}, for each $e \in \cE_H$ the following estimate holds for the oversampling bubble part:
    \label{prop: os bubble small}
    \[\|R_e u^\sfb_{\omega_e}\|_{\cH^{1/2}(e)}\leq CH\|f\|_{L^2(\omega_e)}\, ,\]
    where $C$ is a constant independent of $k$ and $H$.
    \end{proposition}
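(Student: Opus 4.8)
The plan is to mimic the proof of Proposition~\ref{prop: bubble is small}, but carried out on the oversampling domain $\omega_e$ and then combined with the fact that $R_e$ acts on the difference $u^\sfb_{\omega_e}-I_H u^\sfb_{\omega_e}$ restricted to $e$. First I would estimate $\|u^\sfb_{\omega_e}\|_{\cH(\omega_e)}$ exactly as in Proposition~\ref{prop: bubble is small}: since $u^\sfb_{\omega_e}$ vanishes on $\partial\omega_e\setminus(\Gamma_N\cup\Gamma_R)$ (which contains at least one edge of $\omega_e$), Assumption~\ref{small mesh1} and the oversampling-domain Poincar\'e inequality \eqref{Poincaree1} give coercivity $\Re a_{\omega_e}(u^\sfb_{\omega_e},u^\sfb_{\omega_e})\geq \tfrac13\|u^\sfb_{\omega_e}\|^2_{\cH(\omega_e)}$; testing the local equation against $u^\sfb_{\omega_e}$ and using Cauchy--Schwarz followed by \eqref{Poincaree1} yields $\|u^\sfb_{\omega_e}\|_{\cH(\omega_e)}\leq 3C'_P H A_{\min}^{-1/2}\|f\|_{L^2(\omega_e)}$.

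Next I would control $\|R_e u^\sfb_{\omega_e}\|_{\cH^{1/2}(e)}$ by $\|u^\sfb_{\omega_e}\|_{\cH(\omega_e)}$, i.e.\ prove that $R_e$ (as a map into the $\cH^{1/2}(e)$ norm) is bounded with constant independent of $k$ and $H$. By Definition~\ref{def: H 1/2 norm}, $\|R_e u^\sfb_{\omega_e}\|_{\cH^{1/2}(e)}$ equals the energy norm of the $V^\sfh$-function whose edge trace is the zero extension of $(u^\sfb_{\omega_e}-I_H u^\sfb_{\omega_e})|_e$ to $E_H$; because this trace is supported only on $e$, that function is supported in $\rN(e,\cT_H)$, so it suffices to bound it on $\rN(e,\cT_H)$. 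By the energy-minimizing property of Helmholtz-harmonic extensions (the local problem is elliptic under Assumption~\ref{small mesh1}), this energy norm is $\lesssim$ the energy norm of any $\cH$-function on $\rN(e,\cT_H)$ with the same trace on $E_H$; a convenient choice is the zero extension of $(u^\sfb_{\omega_e}-I_H u^\sfb_{\omega_e})|_e$ realized as a piecewise-nice bulk function, and then a trace/extension estimate together with the $H^{1/2}(e)\hookleftarrow H^1$ bound reduces matters to $\|u^\sfb_{\omega_e}-I_H u^\sfb_{\omega_e}\|_{H^1(\rN(e,\cT_H))}$. The interpolation operator $I_H$ is $H^1$-stable on $V^\sfh$ up to a $C^\alpha$/Poincar\'e argument (as used in Subsection~\ref{sec-Localization of Approximation}), and on $\rN(e,\cT_H)$ the $kV$-term is controlled by the gradient term via \eqref{Poincaree1}, so altogether $\|R_e u^\sfb_{\omega_e}\|_{\cH^{1/2}(e)}\lesssim \|u^\sfb_{\omega_e}\|_{\cH(\rN(e,\cT_H))}\leq \|u^\sfb_{\omega_e}\|_{\cH(\omega_e)}$ with a constant depending only on the mesh parameters.

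Combining the two displays gives $\|R_e u^\sfb_{\omega_e}\|_{\cH^{1/2}(e)}\leq C H\|f\|_{L^2(\omega_e)}$ with $C$ depending only on $c_0,c_1,d$ (through $C'_P$, $C_P$, $C_{\mathrm{mesh}}$-type constants and the interpolation stability constant), which is exactly the claim. The main obstacle I anticipate is the second step: making rigorous and $k$-uniform the boundedness of $R_e$ into the $\cH^{1/2}(e)$ norm, since that norm is defined through a \emph{global} Helmholtz-harmonic extension to all of $\Omega$ rather than a purely local object. The resolution is precisely that the zero extension of an $H_{00}^{1/2}(e)$ function to $E_H$ produces a $V^\sfh$-function supported in $\rN(e,\cT_H)$, so the global extension is in fact compactly supported, and on that fixed patch the energy-minimality plus the smallness of the $k^2V^2$-term (guaranteed by Assumption~\ref{small mesh1}) make the bound elliptic-like and $k$-independent; one must be careful to invoke the $H^1$-stability of $I_H$ on $V^\sfh$ with a constant that does not blow up as $k$ grows, which again follows from \eqref{Poincaree1} because $H=O(1/k)$. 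The remaining ingredients---coercivity on $\omega_e$, Cauchy--Schwarz, and the Poincar\'e inequality---are routine, as in Proposition~\ref{prop: bubble is small}.
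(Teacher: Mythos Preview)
Your first step---bounding $\|u^{\sfb}_{\omega_e}\|_{\cH(\omega_e)}\leq 3C'_P A_{\min}^{-1/2}H\|f\|_{L^2(\omega_e)}$ via coercivity on $\omega_e$---is correct and is exactly what the paper does. The divergence is in your second step. You try to show that $R_e$ is bounded from $(\cH(\omega_e),\|\cdot\|_{\cH(\omega_e)})$ to $(H^{1/2}_{00}(e),\|\cdot\|_{\cH^{1/2}(e)})$ uniformly in $k,H$, and conclude $\|R_e u^{\sfb}_{\omega_e}\|_{\cH^{1/2}(e)}\lesssim \|u^{\sfb}_{\omega_e}\|_{\cH(\omega_e)}$. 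The paper does \emph{not} claim this; instead it invokes Lemma~\ref{lemma: bound H 1/2 by energy norm}, which gives
\[
\|R_e v\|_{\cH^{1/2}(e)}\leq C\bigl(\|v\|_{\cH(\omega)} + H\|\nabla\cdot(A\nabla v)+k^2V^2v\|_{L^2(\omega)}\bigr),
\]
and then uses that the Helmholtz residual of $u^{\sfb}_{\omega_e}$ equals $-f$, so the second term is $H\|f\|_{L^2(\omega)}$. Both contributions are $O(H\|f\|_{L^2(\omega_e)})$ and the result follows in two lines.

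Your version of the second step has a genuine gap. Nodal interpolation $I_H$ is \emph{not} $H^1$-stable in 2D without extra regularity; the $C^\alpha$ estimate you invoke to repair this depends on $\|\nabla\cdot(A\nabla v)\|_{L^2}$, which for $v=u^{\sfb}_{\omega_e}$ brings back exactly the source term $\|f\|_{L^2}$ that is absent from your claimed bound. Moreover, you justify stability of $I_H$ ``on $V^{\sfh}$'', but $u^{\sfb}_{\omega_e}$ is not Helmholtz-harmonic in any element (it solves the inhomogeneous equation), so that statement does not apply. Once you accept that the correct intermediate bound carries the additional term $H\|f\|_{L^2(\omega)}$, your argument collapses into the paper's: the energy-minimality and trace gymnastics are unnecessary, and Lemma~\ref{lemma: bound H 1/2 by energy norm} does all the work.
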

    The proof is deferred to Subsection \ref{subsec: Proof of Proposition prop: small os bubble}. 
    
       \yc{We further define a special Helmholtz-harmonic function $u^\sfs\in V^\sfh$, such that that its restriction on each edge $e\in E_H$ equals $R_eu_{\omega_e}^{\sfb}$. Namely this special Helmholtz-harmonic function accounts for the second term in \eqref{eqn: residue os decompose} for each edge. By the previous proposition, we immediately have the estimate:
       \begin{equation*}
         \|u^\sfs\|_{\cH(\Omega)}\leq CH\|f\|_{L^2(\Omega)}\, ,
       \end{equation*} where $C$ is a constant independent of $k$ and $H$. Along with Proposition \ref{prop: bubble is small}, we conclude that there is a constant $C_s$ independent of $k$ and $H$ such that 
       \begin{equation}
           \label{bbh}
         \|u^\sfs\|_{\cH(\Omega)}+\|u^\sfb\|_{\cH(\Omega)}\leq C_s H\|f\|_{L^2(\Omega)}\, .
       \end{equation}} 
    \end{enumerate}
   Now consider the following space of basis functions:
  \[\tilde{V}_{H,m,e}^{(1)}:=\yc{W_{m,e}} \, .\]
    In practice, this space can be computed locally by an SVD of $R_e$. Due to \eqref{eqn: residue os decompose} and \eqref{eqn: os harmonic part approximation}, we have the following error estimate on each $e$:
    \begin{equation}
    \label{eqn: harmonic part approximation on each e}
        \min_{\tilde{v}_e \in \tilde{V}_{H,m,e}^{(1)}} \|R_e u^\sfh-u^\sfs-\tilde{v}_e\|_{\cH^{1/2}(e)}\leq C_{\epsilon}\exp\left(-m^{(\frac{1}{d+1}-\epsilon)}\right)\|u_{\omega_e}^{\sfh}\|_{\cH(\omega_e)}\, .
    \end{equation}
    {\begin{remark}
    The operator $R_e$ involves nodal interpolation, which is in general not stable for $H^1$ functions if the dimension is greater than $1$. However, in Theorem \ref{thm: svd exponential decay of Re}, we take the domain of the operator to be $U(\omega_e)$, which contains Helmholtz-harmonic functions that are  H\"older continuous, due to the standard $C^{\alpha}$ estimates for elliptic equations. More specifically, Lemma \ref{lemma: bound H 1/2 by energy norm} implies the stability of $R_e$ in this space.
    \end{remark}}
    
   %{\begin{remark}\label{rmh}
%If we only use the Helmholtz-harmonic part $\text{span}\{W_{m,e} \}$ for approximation, we will have the following bound due to the fact that the oversampling bubble part is small  by Proposition \ref{prop: small os bubble}:
    %\begin{equation}
    %\label{eqn: harmonic part approximation on each e wb}
        %\min_{\tilde{v}_e \in \text{span}\{W_{m,e} \}} \|R_e u^\sfh-\tilde{v}_e\|_{\cH^{1/2}(e)}\leq C_{\epsilon}\exp\left(-m^{(\frac{1}{d+1}-\epsilon)}\right)\|u_{\omega_e}^{\sfh}\|_{\cH(\omega_e)}+CH\|f\|_{L^2(\omega_e)}\, .
    %\end{equation}
    %\end{remark}
    \begin{remark}
    \label{rmki}
    If we follow the proof of Lemma 3.13 in \cite{ma2021novel}, it is be possible to remove the small parameter $\epsilon$ in Theorem \ref{thm: svd exponential decay of Re} to get a better asymptotic bound $O(\exp(-m^{\frac{1}{d+1}}))$.
    \end{remark}
    \subsubsection{Low Complexity in Approximation}
    \label{subsec:Low Complexity in Approximation}
    Finally, define the collection of edge basis functions \[\tilde{V}_{H,m}^{(1)}=\text{span}~\{\bigcup_e \tilde{V}^{(1)}_{H,m,e}\}\, ,\] and denote by $\tilde{V}_H^{(0)}$ the span of the nodal interpolation basis used earlier, i.e. $\tilde{V}_H^{(0)}:=\text{span}~\{\tilde{\psi}_i\}$. Define the overall edge approximation $\tilde{V}_{H,m}=\text{span}~\{\tilde{V}_H^{(0)} \bigcup \tilde{V}_{H,m}^{(1)}\}$.
    Let $V_{H,m} \subset V^\sfh$ be the corresponding part of  $\tilde{V}_{H,m} \subset \tilde{V}^\sfh$, via \eqref{eqn: edge bulk correspondence}. Then, using \eqref{eqn: harmonic part approximation on each e} and Theorem \ref{thm: Edge coupling error estimate}, we get a nearly exponentially decaying error estimate for approximating $u^\sfh$; see Theorem \ref{thm: exponential error estimate}. 
    \begin{theorem}
    \label{thm: exponential error estimate} Let $d=2$. Under Assumption \ref{small mesh1} and \eqref{eqn: stability}, it holds that
    \[\min_{v \in V_{H,m}} \|u^\sfh - u^\sfs - v\|_{\cH(\Omega)} \leq C_d(C_{\mathrm{stab}}(k)+H)\exp\left(-m^{(\frac{1}{d+1}-\epsilon)}\right)\|f\|_{L^2(\Omega)}\, , \]
    where $C_d$ is a generic constant independent of $k, m, H$.
    \end{theorem}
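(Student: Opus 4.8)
The plan is to assemble the estimate from the three ingredients already in place: the nodal interpolation $I_H$, the edge-coupling bound of Theorem \ref{thm: Edge coupling error estimate}, and the per-edge approximation bound \eqref{eqn: harmonic part approximation on each e} coming from the SVD of $R_e$. First I would fix the approximant
\[
v \;=\; I_H u^{\sfh} \;+\; \sum_{e\in\cE_H} v_e \;\in\; V_{H,m},
\]
where $v_e\in V^{\sfh}$ is the $V^{\sfh}$-representative of the element $\tilde v_e\in\tilde V^{(1)}_{H,m,e}$ realizing the minimum in \eqref{eqn: harmonic part approximation on each e}; that $v$ lies in $V_{H,m}$ follows from $I_H u^{\sfh}$ being the representative of an element of $\tilde V^{(0)}_H$ together with $\tilde v_e\in\tilde V^{(1)}_{H,m,e}$. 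A preliminary observation that will be used repeatedly is that $u^{\sfs}$ vanishes at every node of $\cN_H$: on each edge $e$ it equals $R_e u^{\sfb}_{\omega_e}=P_e(u^{\sfb}_{\omega_e}-I_H u^{\sfb}_{\omega_e})$, an interpolation residue. Hence $I_H u^{\sfs}=0$, and, via the zero-extension characterization of $H_{00}^{1/2}(e)$, the sum over $e$ of the zero extensions of $u^{\sfs}|_e$ equals $\widetilde{u^{\sfs}}$.

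Second, I would apply Theorem \ref{thm: Edge coupling error estimate} with the shifted per-edge targets $\tilde v_e':=u^{\sfs}|_e+\tilde v_e$ (each zero-extended to $E_H$), so that $\sum_e v_e'=u^{\sfs}+\sum_e v_e$ and, since $I_H(u^{\sfh}-u^{\sfs})=I_H u^{\sfh}$,
\[
\|u^{\sfh}-u^{\sfs}-v\|^2_{\cH(\Omega)}
=\Big\|u^{\sfh}-I_Hu^{\sfh}-\sum_{e\in\cE_H}v_e'\Big\|^2_{\cH(\Omega)}
\le C_{\mathrm{mesh}}\sum_{e\in\cE_H}\big\|R_e\tilde u^{\sfh}-u^{\sfs}|_e-\tilde v_e\big\|^2_{\cH^{1/2}(e)}.
\]
By the oversampling decomposition \eqref{eqn: residue os decompose} one has $R_e\tilde u^{\sfh}-u^{\sfs}|_e=R_e u^{\sfh}_{\omega_e}$ on $e$, so choosing $\tilde v_e\in W_{m,e}$ optimally and invoking \eqref{eqn: os harmonic part approximation} bounds each summand by $C_\epsilon^2\exp(-2m^{1/(d+1)-\epsilon})\,\|u^{\sfh}_{\omega_e}\|^2_{\cH(\omega_e)}$.

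Third, it remains to control $\sum_{e}\|u^{\sfh}_{\omega_e}\|^2_{\cH(\omega_e)}$. Writing $u=u^{\sfh}_{\omega_e}+u^{\sfb}_{\omega_e}$ in $\omega_e$ and repeating, now on $\omega_e$ under Assumption \ref{small mesh1}, the coercivity argument of Proposition \ref{prop-well-definedness-local-prob} together with the bubble estimate in Proposition \ref{prop: bubble is small}, one gets $\|u^{\sfh}_{\omega_e}\|_{\cH(\omega_e)}\le\|u\|_{\cH(\omega_e)}+\|u^{\sfb}_{\omega_e}\|_{\cH(\omega_e)}\le\|u\|_{\cH(\omega_e)}+CH\|f\|_{L^2(\omega_e)}$. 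Each element of $\cT_H$ lies in only a mesh-dependent bounded number of oversampling domains, so summing over $e$ and then applying the stability bound $\|u\|_{\cH(\Omega)}\le C_{\mathrm{stab}}(k)\|f\|_{L^2(\Omega)}$ gives $\big(\sum_e\|u^{\sfh}_{\omega_e}\|^2_{\cH(\omega_e)}\big)^{1/2}\le C(C_{\mathrm{stab}}(k)+H)\|f\|_{L^2(\Omega)}$. Inserting this into the second step yields the claim, with $C_d$ absorbing $C_\epsilon$, $\sqrt{C_{\mathrm{mesh}}}$, and the mesh/overlap constants.

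I expect the main obstacle to be the bookkeeping in the second step: carefully using the $V^{\sfh}\leftrightarrow\tilde V^{\sfh}$ correspondence, the node-vanishing and zero-extension properties of $u^{\sfs}$, and the identity $\sum_e v_e'=u^{\sfs}+\sum_e v_e$, so that Theorem \ref{thm: Edge coupling error estimate}---stated for $u^{\sfh}$---can legitimately be applied after shifting the target by $u^{\sfs}$. The remaining analytic estimates (local coercivity on $\omega_e$, finite overlap of the $\omega_e$, and the global stability bound) are routine consequences of the results established earlier.
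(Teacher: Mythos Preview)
Your proposal is correct and follows essentially the same route as the paper: apply the edge-coupling bound (Theorem \ref{thm: Edge coupling error estimate}) together with the per-edge estimate \eqref{eqn: harmonic part approximation on each e}, then control $\sum_e\|u^{\sfh}_{\omega_e}\|^2_{\cH(\omega_e)}$ via the local bubble bound on $\omega_e$, finite overlap of the oversampling domains, and the global stability estimate \eqref{eqn: stability}. Your treatment of the $u^{\sfs}$ shift (noting $I_H u^{\sfs}=0$ and absorbing $u^{\sfs}|_e$ into the edge targets) is in fact more explicit than the paper's, which invokes \eqref{eqn: harmonic part approximation on each e} and Theorem \ref{thm: Edge coupling error estimate} together without spelling out this bookkeeping.
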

\begin{proof}
    By Theorem \ref{thm: svd exponential decay of Re} and the global error estimate in Theorem \ref{thm: Edge coupling error estimate}, we get
     \begin{equation}
     \label{eqn-global-bounded-by-local}
         \min_{v \in V_{H,m}} \|u^\sfh -u^\sfs -  v\|^2_{\cH(\Omega)} \leq  C_{\mathrm{mesh}}C_{\epsilon}^2\exp\left(-2m^{(\frac{1}{d+1}-\epsilon)}\right)\sum_{e \in \cE_H} \|u_{\omega_e}^{\sfh}\|^2_{\cH(\omega_e)}\, .
     \end{equation}
     Due to Assumption \ref{small mesh1}, we have the elliptic estimate for the oversampling bubble part 
     \begin{equation}
     \label{eqn-small-os-bubble}
         \|u^{\sfb}_{\omega_e}\|_{\mathcal{H}(\omega_e)} \leq  \frac{3C'_P}{A_{\min}^{1/2}}H\|f\|_{L^{2}(\omega_e)}\, .
     \end{equation}
     This is similar to Proposition \ref{prop: bubble is small}, which is a consequence of Assumption \ref{small mesh}.
     Then,  using $u_{\omega_e}^{\sfh}=u-u^{\sfb}_{\omega_e}$, it follows that 
     \begin{equation}
     \label{eqn-local-bounded-by-rhs}
         \|u_{\omega_e}^{\sfh}\|^2_{\cH(\omega_e)}\leq 2(\|u\|^2_{\cH(\omega_e)}+\|u^{\sfb}_{\omega_e}\|^2_{\mathcal{H}(\omega_e)})\leq\frac{18C'^{2}_P}{A_{\min}}H^2\|f\|^2_{L^{2}(\omega_e)}+2\|u\|^2_{\cH(\omega_e)}\,. 
     \end{equation}
     Note that by our choice of oversampling domains, every element $T$ can only be covered by $\{\omega_e\}_{e \in \cE_H}$ at most $C_1$ times for a fixed $C_1$. Therefore it holds that
     \begin{equation}
     \label{eqn-sum-of-local-rhs-bounded}
        \sum_{e \in \cE_H} \|f\|^2_{L^{2}(\omega_e)}\leq C_1\|f\|^2_{L^{2}(\Omega)}\, , 
     \end{equation}
     as well as
     \begin{equation}
     \label{eqn-sum-of-u-norm-bounded}
        \sum_{e \in \cE_H} \|u\|^2_{\cH(\omega_e)}\leq C_1\|u\|^2_{\cH(\Omega)}\leq C_1C^2_{\mathrm{stab}}(k)\|f\|^2_{L^{2}(\Omega)}\, , 
     \end{equation}
      where the last inequality is due to the \textit{a priori} estimate \eqref{eqn: stability}. Combining \eqref{eqn-global-bounded-by-local}, \eqref{eqn-local-bounded-by-rhs}, \eqref{eqn-sum-of-local-rhs-bounded} and \eqref{eqn-sum-of-u-norm-bounded} completes the proof.
    \end{proof}
    
    Clearly, Theorem \ref{thm: exponential error estimate} implies the low complexity property of the part $u^\sfh-u^\sfs$. Each edge contains at most $m$ basis functions, so the space $V_{H,m}$ is of dimension $O(m/H^d)$, while the approximation accuracy is of order $\exp\left(-m^{(\frac{1}{d+1}-\epsilon)}\right)$. We will use the space $V_{H,m}$ in our multiscale framework for approximating $u^\sfh-u^\sfs$.

{\begin{remark}
\label{crucial}
%If we exclude the oversampling bubble part in $V_{H,m}$ and denote the new space by $V_{H,m}'$, then following \eqref{eqn: harmonic part approximation on each e wb}, we get the following global error estimate:
%\begin{equation}
%\label{bbh}
    %\min_{v \in V_{H,m}'} \|u^\sfh - v\|_{\cH(\Omega)} \leq C_G\left((C_{\mathrm{stab}}(k)+H)\exp\left(-m^{(\frac{1}{d+1}-\epsilon)}\right)+H\right)\|f\|_{L^2(\Omega)}\, ,
%\end{equation}
%where $C_G$ is a generic constant independent of $k,m,H$. 
\yc{$V_{H,m}$ does not depend on the right hand side $f$ or the solution $u$. Therefore, we can use the same $V_{H,m}$ for different right-hand sides.}
\end{remark}}
\section{The Multiscale Methods}
\label{methods}
In this section, we discuss the multiscale methods for solving \eqref{eqn:Helmholtz smooth k}, based on the coarse-fine scale decomposition established in the last section. 

\yc{By the nature of a multiscale algorithm, we will handle the ``coarse part'' $u^\sfh-u^\sfs$ and the ``fine part'' $u^\sfb+u^\sfs$ separately. Conceptually, the locality and small magnitude of $u^\sfb+u^\sfs$ imply that it can be computed efficiently or ignored without affecting the accuracy much, and the low complexity of $u^\sfh-u^\sfs$ indicates that we can use a Galerkin method with a small number of basis functions to approximate it accurately.}

In Subsection \ref{subsec: framework}, we outline our general multiscale computational framework. Depending on how the trial and test spaces in the Galerkin method are selected, we get two categories of algorithms, namely the Ritz-Galerkin approach and Petrov-Galerkin approach that we will make precise in Subsections \ref{subsec: Ritz-Galerkin Method} and \ref{subsec: Petrov-Galerkin Method}, respectively. 

\subsection{The Multiscale Framework}  
\label{subsec: framework}
\yc{The bubble part $u^\sfb$ and the special function $u^\sfs$ are first computed locally. Given these parts, we form an effective equation for $u^\sfh-u^\sfs$ as
\begin{equation}
\label{eqn: effective eqn for u h}
    a(u^\sfh-u^\sfs,v)=(f,v)_{\Omega}-a(u^\sfb+u^\sfs,v)\, ,
\end{equation}}
for any $v \in \cH(\Omega)$. 
\begin{remark}
The right hand side in \eqref{eqn: effective eqn for u h} can be seen as a bounded linear functional on $v \in \cH(\Omega)$. By the estimate in \eqref{eqn: wellposed H -1 rhs}, this equation for $u^\sfh-u^\sfs$ (given fixed $u^\sfb+u^\sfs$) is well-posed.
\end{remark}

Numerically, we solve the equation \eqref{eqn: effective eqn for u h} for $u^\sfh-u^\sfs$ using a Galerkin method. That is, we choose a trial space $S$ and a test space $S_{\mathrm{test}}$ to find a numerical solution $u_S \in S$ that satisfies
\begin{equation}
    a(u_S,v)=(f,v)_{\Omega}-a(u^\sfb+u^\sfs,v)\, ,
\end{equation}
for any $v \in S_{\mathrm{test}}$. If $S_{\mathrm{test}}=S$, then it is called a Ritz-Galerkin method, otherwise it is a Petrov-Galerkin method. Here since the equation is formulated in the complex domain, we specifically refer to the choice $S_{\mathrm{test}}=\overline{S}$ as the Petrov-Galerkin method. 

In Subsection \ref{subsec: Ritz-Galerkin Method}, we formulate our Ritz-Galerkin method and present theories for the well-posedness of the discrete problem, as well as the error estimate in both the energy norm and the $L^2$ norm. In Subsection \ref{subsec: Petrov-Galerkin Method}, we discuss the Petrov-Galerkin method, which is more straightforward and appears more convenient in practical computation. 
\subsection{The Ritz-Galerkin Method}
\label{subsec: Ritz-Galerkin Method}
First, we establish a general strategy for analyzing the Ritz-Galerkin method in solving \eqref{eqn: effective eqn for u h}. We start with a definition of the approximation accuracy of $S$.
\begin{definition}  For $S\subset V^\sfh$, the approximation accuracy of $S$ is defined as  
    \begin{equation}
    \label{apro proxy}
        \eta(S):=\sup _{f \in L^{2}(\Omega) \backslash\{0\}} \inf _{v \in S} \frac{\left\|u-v\right\|_{\mathcal{H}(\Omega)}}{\|f\|_{L^{2}(\Omega)}}\, ,
    \end{equation}
    where $u$ and $f$ are related via the Helmholtz equation in \eqref{eqn:Helmholtz smooth k}.
\end{definition}
For the Ritz-Galerkin method, it turns out that $\eta(S)$ is critical in analyzing the solution errors of $u_S$.
\begin{theorem}
\label{eqn:approximation property}
Suppose \eqref{eqn: stability} holds and $k \eta(S)  \leq 1/{(4C_cV_{\max})}$ as well as $\overline{S}=S$.
Then, the following statements hold for the Ritz-Galerkin method:
\begin{enumerate}
    \item The Galerkin solution $u_S$ is a quasi-optimal approximation in the sense that
 \begin{align*}
     &\|u^\sfh-u^\sfs-u_{S}\|_{\mathcal{H}(\Omega)} \leq 2 C_{c} \inf _{v \in S}\|u^\sfh-u^\sfs-v\|_{\mathcal{H}(\Omega)}\, ,\\
     &\|u^\sfh-u^\sfs-u_{S}\|_{L^{2}(\Omega)} \leq C_{c} \eta(S)\|u^\sfh-u^\sfs-u_{S}\|_{\mathcal{H}(\Omega)}\, .
 \end{align*}
     \item \yc{If we further assume $Hk\leq 1/(8C_sC_cV_{\max})$}, for constant $C_s$ defined in \eqref{bbh}, the discrete problem satisfies the discrete inf-sup stability condition:
 \begin{equation*}
\inf _{v \in S} \sup _{v' \in S \backslash\{0\}} \frac{|a(v, v')|}{\|v\|_{\mathcal{H}(\Omega)}\|v'\|_{\mathcal{H}(\Omega)}} \geq \frac{1}{4+3C_{c}^{-1}+8 kV_{\max} C_{\mathrm{stab}}(k)} \, .
\end{equation*}
\end{enumerate}
\end{theorem}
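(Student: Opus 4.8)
I would run a Schatz--type (Aubin--Nitsche) argument for part~1 and a duality-based test-function construction for part~2. Write $w^{\star}:=u^{\sfh}-u^{\sfs}\in V^{\sfh}$ (a difference of two $V^{\sfh}$ functions) and $e:=w^{\star}-u_{S}\in V^{\sfh}$. Subtracting the continuous equation \eqref{eqn: effective eqn for u h} tested on $S$ from the Galerkin equation yields the orthogonality $a(e,v)=0$ for all $v\in S$. The first step is the $L^{2}$ bound by duality: let $z:=N_{k}^{*}e$, so that $\|e\|_{L^{2}(\Omega)}^{2}=(e,e)_{\Omega}=a(e,z)$; by Galerkin orthogonality $a(e,z)=a(e,z-v)$ for every $v\in S$, and by the continuity estimate \eqref{eqn: continuity estimate}, $\|e\|_{L^{2}(\Omega)}^{2}\le C_{c}\|e\|_{\cH(\Omega)}\inf_{v\in S}\|z-v\|_{\cH(\Omega)}$. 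Since $\overline{S}=S$ and $N_{k}^{*}\overline{g}=\overline{N_{k}g}$, the approximation accuracy $\eta(S)$ from \eqref{apro proxy} is unchanged for the adjoint problem, hence $\inf_{v\in S}\|z-v\|_{\cH(\Omega)}\le\eta(S)\|e\|_{L^{2}(\Omega)}$, giving the second inequality of part~1, namely $\|e\|_{L^{2}(\Omega)}\le C_{c}\eta(S)\|e\|_{\cH(\Omega)}$.

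For the energy estimate I would use the G\aa rding identity: since the Robin contribution $(T_{k}v,v)_{\Gamma_{R}}$ is purely imaginary, $\Re a(v,v)=\|v\|_{\cH(\Omega)}^{2}-2k^{2}\|Vv\|_{L^{2}(\Omega)}^{2}$ for all $v\in\cH(\Omega)$, so $\|e\|_{\cH(\Omega)}^{2}\le\Re a(e,e)+2k^{2}V_{\max}^{2}\|e\|_{L^{2}(\Omega)}^{2}$. Galerkin orthogonality gives $\Re a(e,e)=\Re a(e,w^{\star}-v)\le C_{c}\|e\|_{\cH(\Omega)}\|w^{\star}-v\|_{\cH(\Omega)}$ for any $v\in S$; the $L^{2}$ bound just proved together with the hypothesis $k\eta(S)\le 1/(4C_{c}V_{\max})$ lets us absorb $2k^{2}V_{\max}^{2}\|e\|_{L^{2}(\Omega)}^{2}\le\tfrac18\|e\|_{\cH(\Omega)}^{2}$ into the left-hand side, yielding $\|e\|_{\cH(\Omega)}\le\tfrac{8}{7}C_{c}\inf_{v\in S}\|w^{\star}-v\|_{\cH(\Omega)}\le 2C_{c}\inf_{v\in S}\|w^{\star}-v\|_{\cH(\Omega)}$, which is part~1. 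Applying this bound with zero data forces $e=0$, so the discrete problem has a unique (hence, $S$ being finite dimensional, well-defined) solution $u_{S}$, and part~1 is complete.

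For part~2, fix $v\in S$ and note $\|v\|_{\cH(\Omega)}^{2}=\Re\!\left[a(v,v)+2k^{2}(v,V^{2}v)_{\Omega}\right]$ from the same identity. Introduce $\xi:=N_{k}^{*}(2k^{2}V^{2}v)\in\cH(\Omega)$, so $a(v,\xi)=2k^{2}(v,V^{2}v)_{\Omega}$ and therefore $\|v\|_{\cH(\Omega)}^{2}=\Re a(v,v+\xi)$. Decompose $\xi=\xi^{\sfh}+\xi^{\sfb}$ as in Section~\ref{construct}; by the left-orthogonality $a(v,w)=0$ for $v\in V^{\sfh}$, $w\in V^{\sfb}$ (Remark~\ref{rmk:orthogonality}) we get $a(v,\xi)=a(v,\xi^{\sfh})$, with $\xi^{\sfh}\in V^{\sfh}$ approximable by $S$. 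Pick $\tilde{\xi}\in S$ almost realizing $\inf_{w\in S}\|\xi^{\sfh}-w\|_{\cH(\Omega)}$ and set $v':=v+\tilde{\xi}\in S$; then $\Re a(v,v')\ge\|v\|_{\cH(\Omega)}^{2}-C_{c}\|v\|_{\cH(\Omega)}\|\xi^{\sfh}-\tilde{\xi}\|_{\cH(\Omega)}$. Using $\|2k^{2}V^{2}v\|_{L^{2}(\Omega)}\le 2kV_{\max}\|v\|_{\cH(\Omega)}$, the bubble estimate \eqref{bbh} (applied to $\xi$; the differential operator is the primal one) with $Hk\le 1/(8C_{s}C_{c}V_{\max})$ to bound $\|\xi^{\sfb}\|_{\cH(\Omega)}$, and $k\eta(S)\le 1/(4C_{c}V_{\max})$ to bound $\inf_{w\in S}\|\xi-w\|_{\cH(\Omega)}$, one obtains $\|\xi^{\sfh}-\tilde{\xi}\|_{\cH(\Omega)}\le\tfrac{3}{4C_{c}}\|v\|_{\cH(\Omega)}$, hence $\Re a(v,v')\ge\tfrac14\|v\|_{\cH(\Omega)}^{2}$. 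Finally $\|v'\|_{\cH(\Omega)}\le\|v\|_{\cH(\Omega)}+\|\tilde{\xi}\|_{\cH(\Omega)}$, and $\|\tilde{\xi}\|_{\cH(\Omega)}$ is controlled by $\|\xi\|_{\cH(\Omega)}\le C_{\mathrm{stab}}(k)\|2k^{2}V^{2}v\|_{L^{2}(\Omega)}\le 2kV_{\max}C_{\mathrm{stab}}(k)\|v\|_{\cH(\Omega)}$ together with the previous estimates; combining $\Re a(v,v')\ge\tfrac14\|v\|_{\cH(\Omega)}^{2}$ with the resulting bound on $\|v'\|_{\cH(\Omega)}$ and keeping careful track of constants gives the stated inf-sup lower bound.

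The main obstacle is the part~2 duality construction: picking the auxiliary field $\xi$ so that $v+\xi$ reproduces $\|v\|_{\cH(\Omega)}^{2}$ exactly, and then replacing $\xi$ by an element of $S$ at controlled cost. This forces three quantitative inputs to mesh together --- the coarse/fine splitting plus left-orthogonality to remove the un-approximable bubble component, the smallness of $Hk$ to make the residual bubble negligible, and the smallness of $k\eta(S)$ for the coarse approximation --- while simultaneously preventing $\|v+\tilde{\xi}\|_{\cH(\Omega)}$ from blowing up, which is precisely where $C_{\mathrm{stab}}(k)$ enters the final constant. A secondary technical point is that $a(\cdot,\cdot)$ is non-Hermitian: one must check that the coarse/fine decomposition and the bubble bound \eqref{bbh} still apply to the adjoint solution $\xi$ (the operator is the same and the Robin term only flips sign, being annihilated by $\Re$), and that $\eta(S)$ is genuinely unchanged for the adjoint problem, which is where $\overline{S}=S$ is used.
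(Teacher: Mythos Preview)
Your proposal is essentially the paper's proof. Part~1 is identical: Aubin--Nitsche duality via $N_k^*e$ for the $L^2$ bound (using $\overline S=S$ to transfer $\eta(S)$ to the adjoint), then the G\aa rding identity with absorption for the energy bound. Part~2 is also the same construction: set $\xi=N_k^*(2k^2V^2v)$ so that $\Re a(v,v+\xi)\ge\|v\|_{\cH(\Omega)}^2$, then replace $\xi$ by an element of $S$ at controlled cost and bound the test function via $C_{\mathrm{stab}}(k)$.

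One remark on Part~2: your invocation of the left-orthogonality $a(v,\xi^{\sfb})=0$ is superfluous and slightly delicate. It is superfluous because you end up bounding $\|\xi^{\sfh}-\tilde\xi\|_{\cH(\Omega)}\le\|\xi-\tilde\xi\|_{\cH(\Omega)}+\|\xi^{\sfb}\|_{\cH(\Omega)}$ anyway, and the first term alone already yields $\Re a(v,v')\ge\|v\|_{\cH(\Omega)}^2-C_c\|v\|_{\cH(\Omega)}\|\xi-\tilde\xi\|_{\cH(\Omega)}$ directly, without ever passing through $\xi^{\sfh}$. It is delicate because $\xi$ solves the \emph{adjoint} problem, so on elements touching $\Gamma_R$ its primal bubble part $\xi^{\sfb}$ does not satisfy the standard local bubble equation (the Robin sign flips), and Proposition~\ref{prop: bubble is small}/\eqref{bbh} do not literally apply. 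The paper avoids this by conjugating: since $\overline\xi=N_k(2k^2V^2\overline v)$ is a genuine primal solution, one decomposes $\overline\xi$ and applies \eqref{bbh} there, then uses $\|\cdot\|_{\cH(\Omega)}$-invariance under conjugation. Your observation that the Robin term is annihilated by $\Re$ is the moral reason everything still works, but the cleaner route is the paper's conjugation trick.
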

{The proof of this theorem is deferred to Subsection \ref{Gard}. It is inspired by the standard G\aa rding-type inequality for a posteriori estimate; see for example \cite{melenk2010convergence}. However, our proofs are slightly different since 
\yc{only the part $u^\sfh-u^\sfs$ is approximated via the basis functions.}}

The above theorem implies that once $\eta(S)$ is small, the discrete problem is well-posed, and the Galerkin solution approximates the exact solution accurately.

\yc{Given Theorem \ref{eqn:approximation property}, we can choose $S=V_{H,m}+\overline{V_{H,m}}$ where $V_{H,m}$ is defined in Theorem \ref{thm: exponential error estimate} independent of the right hand side. 
%We emphasize that since we treat this right hand side as a fixed specific function, the space $S$ can be understood as independent of $f$ in the expression of \eqref{apro proxy}. Thus, the quantity $\eta^\sfh(S)$ is well-defined.
For the quantity $\eta(S)$, we have the following estimate using its subspace $V_{H,m}$:
\begin{equation}
\label{generic}
    \eta(S)\leq \eta(V_{H,m})\leq \max(C_d,C_s)\left((C_{\mathrm{stab}}(k)+H)\exp\left(-m^{(\frac{1}{d+1}-\epsilon)}\right)+H\right). 
\end{equation}
 Here we have used \eqref{bbh} for the small parts $u^\sfb$ and $u^\sfs$ of size $O(H)$, and Theorem \ref{thm: exponential error estimate} for the approximation error for $u^\sfh-u^\sfs$. 
}
{
Invoking Theorems \ref{eqn:approximation property} and  \ref{thm: exponential error estimate}, we get the following error analysis for the Galerkin solution:
\begin{theorem}
\label{thm: err estimate Ritz-Galerkin}
Let $d=2$. Suppose Assumption \ref{small mesh1} and \eqref{eqn: stability} hold, and \[ \max(C_d,C_s)k\left((C_{\mathrm{stab}}(k)+H)\exp\left(-m^{(\frac{1}{d+1}-\epsilon)}\right)+H\right) \leq 1/{(4C_cV_{\max})}\, ,\] where \yc{$C_s$, $C_d$ are generic constants defined in \eqref{bbh} and Theorem \ref{thm: exponential error estimate} respectively. }  Then using $S=V_{H,m}+\overline{V_{H,m}}$ in the Ritz-Galerkin method leads to a solution $u_S$ that satisfies:
\begin{equation}
\label{eqn: ritz-Galerkin, final error estimate}
%\begin{aligned}
     \|u^\sfh-u^\sfs-u_{S}\|_{\mathcal{H}(\Omega)} \leq 2 C_{c}C_d(C_{\mathrm{stab}}(k)+H)\exp\left(-m^{(\frac{1}{d+1}-\epsilon)}\right)\|f\|_{L^2(\Omega)}\, .
     %&\|u^\sfh-u_{S}\|_{L^{2}(\Omega)} \leq 2(C_{c} C)^2\left(C_{\mathrm{stab}}(k)+H\right)^2\exp\left(-2m^{(\frac{1}{d+1}-\epsilon)}\right)\|f\|_{L^2(\Omega)}\, ,
 %\end{aligned}
\end{equation}
%where $C_d$ is the constant in Theorem \ref{thm: exponential error estimate}.
\end{theorem}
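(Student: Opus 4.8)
The plan is to combine the two pillars already available: the quasi-optimality of the Ritz--Galerkin solution (Part 1 of Theorem \ref{eqn:approximation property}) and the nearly exponential approximation estimate for $u^\sfh - u^\sfs$ by elements of $V_{H,m}$ (Theorem \ref{thm: exponential error estimate}). The only preliminary work is to check that the hypotheses of Theorem \ref{eqn:approximation property} are met under the standing assumptions of the present theorem, after which the error bound \eqref{eqn: ritz-Galerkin, final error estimate} is essentially a substitution.

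First I would set $S = V_{H,m} + \overline{V_{H,m}}$ and verify the two structural requirements needed to invoke Theorem \ref{eqn:approximation property}: the symmetry condition $\overline{S} = S$ holds by construction since $S$ is closed under conjugation; and the smallness condition $k\,\eta(S) \le 1/(4C_cV_{\max})$ follows from the chain of bounds in \eqref{generic}, namely $\eta(S) \le \eta(V_{H,m}) \le \max(C_d,C_s)\big((C_{\mathrm{stab}}(k)+H)\exp(-m^{(\frac{1}{d+1}-\epsilon)}) + H\big)$, together with the hypothesis of the theorem, which is precisely the statement that $k$ times this upper bound is at most $1/(4C_cV_{\max})$. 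I would also note that this same smallness hypothesis, being stronger than $Hk \le 1/(8C_sC_cV_{\max})$ once $m$ is at all nontrivial (or, if one wants to be careful, this second condition can be recorded as an additional harmless assumption of the same flavor), secures the discrete inf-sup stability in Part 2, hence the well-posedness of the discrete problem and the existence of a unique $u_S$.

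With Theorem \ref{eqn:approximation property} in force, Part 1 gives the quasi-optimality
\[
\|u^\sfh - u^\sfs - u_S\|_{\cH(\Omega)} \le 2C_c \inf_{v \in S} \|u^\sfh - u^\sfs - v\|_{\cH(\Omega)} \le 2C_c \inf_{v \in V_{H,m}} \|u^\sfh - u^\sfs - v\|_{\cH(\Omega)},
\]
where the second inequality simply uses $V_{H,m} \subset S$. Then Theorem \ref{thm: exponential error estimate} bounds the right-hand infimum by $C_d(C_{\mathrm{stab}}(k)+H)\exp(-m^{(\frac{1}{d+1}-\epsilon)})\|f\|_{L^2(\Omega)}$, and chaining the two estimates yields exactly \eqref{eqn: ritz-Galerkin, final error estimate}.

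The main obstacle, such as it is, is purely bookkeeping: one must make sure the constants $C_d$ and $C_s$ appearing in the hypothesis are literally the ones from \eqref{bbh} and Theorem \ref{thm: exponential error estimate}, so that the smallness condition assumed here translates verbatim into the hypothesis $k\eta(S) \le 1/(4C_cV_{\max})$ of Theorem \ref{eqn:approximation property} via \eqref{generic}; there is a minor subtlety in that \eqref{generic} uses $\max(C_d,C_s)$ while the final bound \eqref{eqn: ritz-Galerkin, final error estimate} only displays $C_d$, which is fine since the extra $u^\sfs$, $u^\sfb$ contributions of size $O(H)$ are dominated once $k H$ is small — but this is exactly the role of the smallness hypothesis and needs a one-line remark rather than real argument. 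No new analytic input beyond the cited results is required.
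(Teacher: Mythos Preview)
Your proposal is correct and matches the paper's approach exactly: the paper simply states that the result follows by ``Invoking Theorems \ref{eqn:approximation property} and \ref{thm: exponential error estimate}'', which is precisely the combination you spell out --- verify $\overline{S}=S$ and $k\eta(S)\leq 1/(4C_cV_{\max})$ via \eqref{generic}, apply the quasi-optimality from Part~1 of Theorem~\ref{eqn:approximation property}, restrict the infimum from $S$ to $V_{H,m}$, and conclude with Theorem~\ref{thm: exponential error estimate}. Your bookkeeping remarks about the constants are more careful than the paper itself.
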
}
%\begin{proof}
% By Remark \ref{crucial}, the best approximation for any right hand is at least as good as the bound in \eqref{bbh}. Therefore \[\eta^\sfh(S)\leq C_G((C_{\mathrm{stab}}(k)+H)\exp\left(-m^{(\frac{1}{d+1}-\epsilon)}\right)+H)\leq \frac{1}{4kC_cV_{\max}}\, .\]
%It holds that \[\eta^\sfh(S)\leq C_G((C_{\mathrm{stab}}(k)+H)\exp\left(-m^{(\frac{1}{d+1}-\epsilon)}\right)+H)\leq \frac{1}{4kC_cV_{\max}}\, .\]
%By invoking Theorem \ref{eqn:approximation property} and \eqref{eqn: S approximation}, we complete the proof.
%\end{proof}

{For the $\epsilon$ that satisfies $\frac{1}{d+1}-\epsilon=\frac{1}{d+2}$, we can take $m\sim O(\log^{d+2}(kC_{\mathrm{stab}}(k))$. Then the condition in Theorem \ref{thm: err estimate Ritz-Galerkin} holds, provided that the mesh size $H$ satisfies the following Assumption \ref{small mesh2}:
 \begin{assumption}
\label{small mesh2}
The mesh size satisfies $H\leq 1/(8\max(C_d,C_s)C_c V_{\max} k)$.
 \end{assumption}} Furthermore, if $C_{\mathrm{stab}}(k)\leq C(1+k^\gamma)$ for some constants $\gamma$ and $C$, then the condition $m\sim O(\log^{d+2}(kC_{\mathrm{stab}}(k))$ reduces to $m \sim \log^{d+2}(k)$. This implies that once $m$ is moderately large, i.e., logarithmic in $k$, the nearly exponential convergence of the Galerkin solution shown in Theorem \ref{thm: err estimate Ritz-Galerkin} will become effective. As in Remark \ref{rmki}, we can improve the index $d+2$ to $d+1$.

We provide several additional remarks of the Ritz-Galerkin method below.
\begin{remark}
In the Ritz-Galerkin method, the trial and test spaces are $S=V_{H,m}+\overline{V_{H,m}}$. One can intuitively understand that $V_{H,m}$ is needed to represent the desired solution, and $\overline{V_{H,m}}$ is used for the approximation of the adjoint problem, which is required in the numerical analysis of the Helmholtz equation. There can be a lot of overlap between $V_{H,m}$ and $\overline{V_{H,m}}$: on each interior edge, since the singular vectors of $R_e$ are real, these edge basis functions are real-valued. Thus, $V_{H,m}$ and $\overline{V_{H,m}}$ can only differ on the edges connected to the boundary, where the presence of the Robin boundary condition makes the operator non-Hermitian. 
\end{remark}

\begin{remark}
\yc{Combining \eqref{eqn: ritz-Galerkin, final error estimate} with the local computation of the fine parts will yield the overall error estimate for $u$, which is nearly exponentially convergent. }
%If $R_e u_{\omega_e}^{\sfb}$ is not computed, then the overall error will be at most $O(H)$ in the $\cH(\Omega)$ norm, and $O(H^2)$ in the $L^2$ norm, due to \eqref{crucial} and Theorem \ref{eqn:approximation property}.
\end{remark}

%\begin{remark}
%In \eqref{eqn: effective eqn for u h}, we need to additionally compute the part $a(u^\sfb,v)$ for each basis function $v$. The computational burden is effectively low. Indeed, for each inner edge, we have $a(u^\sfb,v)=\overline{a(v,u^\sfb)}=0$ for a basis function $v$ induced by this edge, due to the fact that this sesquilinear form is symmetric in the interior and the one-direction orthogonality between $V^\sfh$ and $V^\sfb$. Thus, we only need to additionally compute $a(u^\sfb,v)$ for $v$ induced by edges connected to the boundary.
%\end{remark}

\subsection{The Petrov-Galerkin Method} \label{subsec: Petrov-Galerkin Method}
In this subsection, we introduce the Petrov-Galerkin method. We choose $S=V_{H,m}$ and $S_{\text{test}}=\overline{V_{H,m}}$. {We give the following remarks on this method}.
\begin{remark}
The trial and test spaces in the Petrov-Galerkin method often have smaller dimensions than their Ritz-Galerkin counterpart, since we do not put the complex conjugate $\overline{V_{H,m}}$ in $S$. This can save computational efforts.
\end{remark}
%\begin{remark}
%\label{rmk: benefit PG no additional terms}
%A main benefit of the Petrov-Galerkin method is that, equation \eqref{eqn: effective eqn for u h} becomes $a(u_S,v)=(f,v)_{\Omega}$, due to the fact that $a(u^\sfb,v)=a(\overline{v}, \overline{u^\sfb}) =0$, where $\overline{v} \in V_{H,m} \subset V^\sfh$ and we have used the property $a(\overline{v}, w)=0$ for any $w \in V^\sfb$. Therefore, the effective equation for $u^\sfh$ is the same as the equation for $u$: we do not need to compute additional terms. Computations of $u^\sfh$ and $u^\sfb$ are automatically decoupled.
%\end{remark}

\begin{remark}
Our current theory does not address the stability of the discrete system and the $\cH(\Omega)$ error estimate for the Petrov-Galerkin method. This is left for our future work. We note that our numerical experiments in the next section imply that these properties also hold for the Petrov-Galerkin method.
\end{remark}
\section{Numerical Experiments} 
\label{sec-numeric-experiments}
In this section, we will outline and discuss our numerical algorithms in detail based on the established theoretical analysis. Several Helmholtz equations are solved using our algorithm, which confirm our theoretical results. We also consider some examples in which our theoretical assumptions are not satisfied. Even for these examples, our methods still give a nearly exponential rate of convergence. This provides further evidence for the robustness of our methods.

\subsection{Set-up}

We consider the domain $\Omega=[0,1]\times [0,1]$ and discretize it by a uniform two-level quadrilateral mesh; see a fraction of this mesh in Figure \ref{fig:mesh1}, where we also show an edge $e$ and its oversampling domain $\omega_e$ in solid lines. 
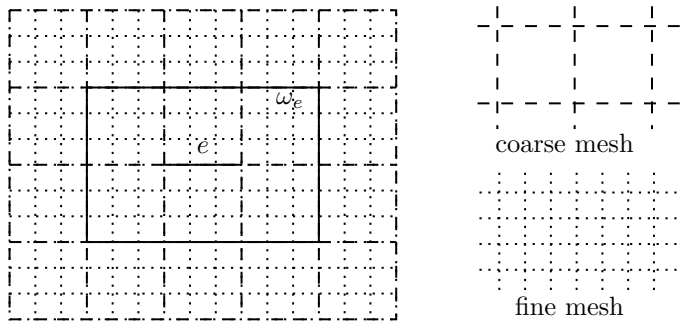
\begin{figure}[htbp]
\centering

\tikzset{every picture/.style={line width=0.75pt}} %set default line width to 0.75pt        

\begin{tikzpicture}[x=0.75pt,y=0.75pt,yscale=-1,xscale=1]
%uncomment if require: \path (0,303); %set diagram left start at 0, and has height of 303

%Shape: Grid [id:dp32131165057233446] 
\draw  [draw opacity=0][dash pattern={on 4.5pt off 4.5pt}] (170.5,75) -- (365.5,75) -- (365.5,231) -- (170.5,231) -- cycle ; \draw  [color={rgb, 255:red, 0; green, 0; blue, 0 }  ,draw opacity=1 ][dash pattern={on 4.5pt off 4.5pt}] (209.5,75) -- (209.5,231)(248.5,75) -- (248.5,231)(287.5,75) -- (287.5,231)(326.5,75) -- (326.5,231) ; \draw  [color={rgb, 255:red, 0; green, 0; blue, 0 }  ,draw opacity=1 ][dash pattern={on 4.5pt off 4.5pt}] (170.5,114) -- (365.5,114)(170.5,153) -- (365.5,153)(170.5,192) -- (365.5,192) ; \draw  [color={rgb, 255:red, 0; green, 0; blue, 0 }  ,draw opacity=1 ][dash pattern={on 4.5pt off 4.5pt}] (170.5,75) -- (365.5,75) -- (365.5,231) -- (170.5,231) -- cycle ;
%Shape: Grid [id:dp5565750274689868] 
\draw  [draw opacity=0][dash pattern={on 0.84pt off 2.51pt}] (170.5,75) -- (365.5,75) -- (365.5,231) -- (170.5,231) -- cycle ; \draw  [color={rgb, 255:red, 0; green, 0; blue, 0 }  ,draw opacity=1 ][dash pattern={on 0.84pt off 2.51pt}] (183.5,75) -- (183.5,231)(196.5,75) -- (196.5,231)(209.5,75) -- (209.5,231)(222.5,75) -- (222.5,231)(235.5,75) -- (235.5,231)(248.5,75) -- (248.5,231)(261.5,75) -- (261.5,231)(274.5,75) -- (274.5,231)(287.5,75) -- (287.5,231)(300.5,75) -- (300.5,231)(313.5,75) -- (313.5,231)(326.5,75) -- (326.5,231)(339.5,75) -- (339.5,231)(352.5,75) -- (352.5,231) ; \draw  [color={rgb, 255:red, 0; green, 0; blue, 0 }  ,draw opacity=1 ][dash pattern={on 0.84pt off 2.51pt}] (170.5,88) -- (365.5,88)(170.5,101) -- (365.5,101)(170.5,114) -- (365.5,114)(170.5,127) -- (365.5,127)(170.5,140) -- (365.5,140)(170.5,153) -- (365.5,153)(170.5,166) -- (365.5,166)(170.5,179) -- (365.5,179)(170.5,192) -- (365.5,192)(170.5,205) -- (365.5,205)(170.5,218) -- (365.5,218) ; \draw  [color={rgb, 255:red, 0; green, 0; blue, 0 }  ,draw opacity=1 ][dash pattern={on 0.84pt off 2.51pt}] (170.5,75) -- (365.5,75) -- (365.5,231) -- (170.5,231) -- cycle ;
%Straight Lines [id:da38546691028387126] 
\draw    (209.5,114) -- (209.5,192) -- (326.5,192) -- (326.5,114) -- cycle ;

%Straight Lines [id:da5883609541572945] 
\draw [color={rgb, 255:red, 0; green, 0; blue, 0 }  ,draw opacity=1 ][line width=0.75]    (248.5,153) -- (287.5,153) ;
%Shape: Grid [id:dp6978138592522893] 
\draw  [draw opacity=0][dash pattern={on 0.84pt off 2.51pt}] (407.5,157) -- (507.5,157) -- (507.5,218) -- (407.5,218) -- cycle ; \draw  [color={rgb, 255:red, 0; green, 0; blue, 0 }  ,draw opacity=1 ][dash pattern={on 0.84pt off 2.51pt}] (417.5,157) -- (417.5,218)(430.5,157) -- (430.5,218)(443.5,157) -- (443.5,218)(456.5,157) -- (456.5,218)(469.5,157) -- (469.5,218)(482.5,157) -- (482.5,218)(495.5,157) -- (495.5,218) ; \draw  [color={rgb, 255:red, 0; green, 0; blue, 0 }  ,draw opacity=1 ][dash pattern={on 0.84pt off 2.51pt}] (407.5,167) -- (507.5,167)(407.5,180) -- (507.5,180)(407.5,193) -- (507.5,193)(407.5,206) -- (507.5,206) ; \draw  [color={rgb, 255:red, 0; green, 0; blue, 0 }  ,draw opacity=1 ][dash pattern={on 0.84pt off 2.51pt}]  ;
%Shape: Grid [id:dp3571739898377386] 
\draw  [draw opacity=0][dash pattern={on 4.5pt off 4.5pt}] (406.5,73) -- (508.5,73) -- (508.5,135) -- (406.5,135) -- cycle ; \draw  [color={rgb, 255:red, 0; green, 0; blue, 0 }  ,draw opacity=1 ][dash pattern={on 4.5pt off 4.5pt}] (416.5,73) -- (416.5,135)(455.5,73) -- (455.5,135)(494.5,73) -- (494.5,135) ; \draw  [color={rgb, 255:red, 0; green, 0; blue, 0 }  ,draw opacity=1 ][dash pattern={on 4.5pt off 4.5pt}] (406.5,83) -- (508.5,83)(406.5,122) -- (508.5,122) ; \draw  [color={rgb, 255:red, 0; green, 0; blue, 0 }  ,draw opacity=1 ][dash pattern={on 4.5pt off 4.5pt}]  ;

% Text Node
\draw (263.5,140) node [anchor=north west][inner sep=0.75pt]   [align=left] {$\displaystyle e$};
% Text Node
\draw (303.5,115) node [anchor=north west][inner sep=0.75pt]   [align=left] {$\displaystyle \omega _{e}$};
% Text Node
\draw (414,136) node [anchor=north west][inner sep=0.75pt]   [align=left] {coarse mesh};
% Text Node
\draw (424,218) node [anchor=north west][inner sep=0.75pt]   [align=left] {fine mesh};

\end{tikzpicture}

\caption{Two level mesh: a fraction}
\label{fig:mesh1}
\end{figure}
    The coarse and fine mesh sizes are denoted by $H$ and $h$, respectively. 
    
    For a given Helmholtz equation,  we compute the reference solution $u_{\text{ref}}$ using the classical FEM on the fine mesh; with a sufficiently small $h$, it is reasonable to treat $u_{\text{ref}}$ as the ground truth $u$. {We remark that via a posteriori estimates, we can check that the fine mesh indeed resolve the corresponding problems; thus the associated fine mesh solutions could serve as good reference solutions, for all of our numerical examples. To be precise, we check that the relative error between the solutions using fine mesh of size $h$ and $h/2$ are small, such that it is of order $10^{-2}$ in energy norm and $10^{-4}$ in $L^2$ norm.}
    
    The accuracy of a numerical solution $u_{\mathrm{sol}}$ is computed by comparing it with the reference solution $u_{\text{ref}}$ on the fine mesh. The accuracy will be measured both in the $L^2$ norm and energy norm: 
    \begin{equation}
\label{rel_error}
\begin{aligned}
e_{L^{2}}&=\frac{\|u_{\text{ref}}-u_{\mathrm{sol}}\|_{L^{2}(\Omega)}}{\|u_{\text{ref}}\|_{L^{2}(\Omega)}}\, ,\\
e_{\cH}&=\frac{\|u_{\text{ref}}-u_{\mathrm{sol}}\|_{\cH(\Omega)}}{\|u_{\text{ref}}\|_{\cH(\Omega)}}\, . 
\end{aligned}
\end{equation}
\subsection{Multiscale Algorithms}
We outline our numerical algorithms for obtaining $u_{\mathrm{sol}}$. There are offline and online stages, depending on whether the steps involve the information of the right hand side.
\subsubsection{Offline Stage}
For each edge $e \in \cE_H$ and its associated oversampling domain $\omega_e$, the key step in the offline stage is to construct the discretized version of the operator
\[R_e: (U(\omega_e),\|\cdot\|_{\cH(\omega_e)}) \to (H_{00}^{1/2}(e),\|\cdot\|_{\cH^{1/2}(e)})\, ,\]
which is defined by $R_e v = (v-I_H v)|_e$. Here $U(\omega_e)$ is defined in \eqref{eqn-Helmholtz-harmonic-space}, $\|\cdot\|_{\cH(\omega_e)}$ is the energy norm in $\omega_e$,  while $H_{00}^{1/2}(e)$ is the Lions-Magenes space, and $\|\cdot\|_{\cH^{1/2}(e)}$ is defined in \eqref{eqn-def-H-edge-norm}. 

We note that functions in $U(\omega_e)$ are fully determined by their traces on $\partial \omega_e \backslash (\Gamma_N\cup \Gamma_R)$. Thus, we can take the discretized matrix version of $R_e$ as a linear mapping from Dirichlet's data on  $\partial \omega_e \backslash (\Gamma_N\cup \Gamma_R)$ to the image of $R_e$, which contains functions on the edge $e$. The discretization of the $\|\cdot\|_{\cH(\omega_e)}$ and $\|\cdot\|_{\cH^{1/2}(e)}$ norms leads to positive definite matrices on the discretized domains $\partial \omega_e \backslash (\Gamma_N\cup \Gamma_R)$ and $e$. To obtain these positive definite matrices, we construct the Helmholtz-harmonic extension operators both on $e$ and $\partial \omega_e \backslash (\Gamma_N\cup \Gamma_R)$, which maps boundary data to the Hemholtz-harmonic function in the domain. Based on this operator, we can calculate the energy norms of the extended Hemholtz-harmonic function. This leads to the required norms as well as the positive definite matrices defining these norms\footnote{See also the implementation in Subsection 4.2 of \cite{chen2020exponential} on how these matrices are constructed for elliptic problems.}.

With the discretized matrices constructed, the next step is to compute the top $m$ left singular vectors of $R_e$ for some selected $m \in \bN$. This SVD problem turns out to be a generalized eigenvalue problem for these discrete matrices. For each $e$, denote the singular vectors by $\tilde{v}_{1,e}, ...,\tilde{v}_{m,e} \in H^{1/2}_{00}(e)$. Their Helmholtz-harmonic extensions to the domain are denoted by $v_{1,e},...,v_{m,e} \in \cH(\Omega)$, obtained via the correspondence \eqref{eqn: edge bulk correspondence}. The basis function space formed by the collection of all $v_{j,e}, 1\leq j \leq m$ and $e \in \cE_H$, together with the interpolation part $\{\psi_i\}_{x_i \in \cN_H}$, are denoted by \yw{$V_{H,m}$ and will constitute the Galerkin basis as defined in Subsection \ref{subsec:Low Complexity in Approximation}}. Note that here $\{\psi_i\}_{x_i \in \cN_H}$ are the same as the basis functions in the MsFEM.

\yw{We are now in a position to construct our Galerkin basis and the associated stiffness matrix. The construction depends on how to choose the trial and test spaces in the Galerkin method. We will outline two possible choices below:
\begin{itemize}
    \item  Ritz-Galerkin: $S = V_{H,m} + \overline{V_{H,m}}$ and $S_{\mathrm{test}}=S$.
    \item  Petrov-Galerkin: $S = V_{H,m}$ and $S_{\mathrm{test}}=\overline{V_{H,m}}$.
\end{itemize}}

\subsubsection{Online Stage}
\label{sec: online stage}
In the online stage, we solve \yw{the coarse and fine scales separately. Firstly we solve for $u^\sfb$ and $u^\sfs$, and then we use the effective equation \eqref{eqn: effective eqn for u h} to solve for $u^\sfh-u^\sfs$.

For the bubble part $u^\sfb$, we solve the local Hemholtz problem in each element $T\in\cT_H$, which leads to $u^{\sfb}_T$ defined in \eqref{eqn:Helmholtz decomposed}. Gluing them together leads to $u^\sfb$.

For $u^\sfs$, on each $e \in \cE_H$ and $\omega_e$, we construct the oversampling bubble part $u^{\sfb}_{\omega_e}$ via solving a local Helmholtz equation. Then, we get an edge function $R_e u^{\sfb}_{\omega_e}$ for each edge. We solve locally the Helmholtz-harmonic extension of these edge functions and add them together to obtain $u^\sfs$.

Now we can form the right-hand side vector in our effective equation \eqref{eqn: effective eqn for u h}, and use the offline-assembled stiffness matrix to obtain the Galerkin solution for the part $u^\sfh-u^\sfs$.}

This construction yields a practical numerical algorithm that efficiently handles multiple right-hand sides.

We note that all the above algorithms consider a uniform number of basis functions, namely $m$, for each edge $e \in \cE_H$. It is also possible to make this number vary with edges, thus fully adaptive to the problem's local properties such as the approach in \cite{hou2015optimal}. Consequently, this will lead to an adaptive algorithm where the truncated singular values serve as local error indicators. We do not pursue this in detail here and will leave this to our future work.

In the following, we will test our algorithms for different model problems. Our general set-up is to fix a reasonable coarse scale $H$ and then study how the errors behave as $m$ changes, for the two choices outlined above.

\begin{remark}
Our numerical experience implies that in the Ritz-Galerkin method, one does not need to add the conjugate space $\overline{V_{H,m}}$  into $S$ while still obtaining an exponential rate of convergence. %Thus, we will use $S=V_{H,m}$  also in the Ritz-Galerkin method for our subsequent numerical experiments. Moreover, in the online basis approach, the space $V_{H,m}$ contains $R_e u_{\omega_e}^\sfb$ for each $e$, which becomes zero when the right hand side in $\omega_e$ is zero. For the sake of numerical implementation, when $R_e u_{\omega_e}^\sfb =0$, we replace it by a quadratic function on $e$ with zero boundary values; this guarantees the resulting stiffness matrix is invertible.
\end{remark}

\subsection{A High Wavenumber Example: Planar Wave} 
\label{subsec: High Wavelength Example}
We start with an example of planar wave where the coefficients are constant and the wavenumber is high. More precisely, we set $A=V=\beta=1$ and $f=0$. The wavenumber $k=2^7$. We take the exact solution to be\[u(x_1,x_2)= \exp(-ik(0.6x_1+0.8x_2))\, .\] Using this solution, we are able to specify the Robin boundary condition on $\partial \Omega$. Note that this is an inhomogeneous boundary condition, so it is beyond our previous discussion. In this case, the inhomogeneous data are incorporated to the equation of the bubble part $u^\sfb$, while the treatment for the Helmholtz-harmonic part remains the same as that in the homogeneous case. \yc{To be specific, now our decomposition on each element $T$ is $u=u_T^\sfh+u_T^\sfb+u_T^\sfp$ where $u_T^\sfp$ stands for a particular solution. The part $u_T^\sfb+u_T^\sfp$ satisfies
$$\begin{aligned}
    -\nabla \cdot (A \nabla (u_T^\sfb+u_T^\sfp) )-k^{2}V^2 (u_T^\sfb+u_T^\sfp)&=f,\  \text{in} \  T\\
   u_T^\sfb+u_T^\sfp&=0, \ \text{on} \  \partial T\setminus (\Gamma_N \cup \Gamma_R)\\
   A\nabla (u_T^\sfb+u_T^\sfp)\cdot\nu&=T_k (u_T^\sfb+u_T^\sfp)+g, \ \text{on} \ \partial T \cap (\Gamma_N \cup \Gamma_R) \, .
    \end{aligned}$$
    We will use $u^\sfb+u^\sfp$ to replace $u^\sfb$ on the right-hand side of the effective equation for Galerkin solution \eqref{eqn: effective eqn for u h}. Similarly, when we compute the special Helmholtz-harmonic function $u^\sfs$ to account for the oversampling bubble part, its restriction on each edge equals $R_e(u_{\omega_e}^\sfb+u_{\omega_e}^\sfp)$ instead of $R_e u_{\omega_e}^\sfb$. In this way we can take care of the boundary data via local particular problems and still obtain the desired accuracy. The error analysis in such case remains the same once we replace $u_T^\sfb$ in the homogeneous data case by $u_T^\sfb+u_T^\sfp$;  in the bound we will also have the norm of $g$.}

We set the fine mesh $h=2^{-10}$, coarse mesh $H=2^{-5}$. We vary the number of edge basis functions in each $e \in \cE_H$, choosing $m=1,2,...,7$ and implementing the two algorithms outlined in Subsection \ref{sec: online stage}. The results are shown in Figure \ref{fig:eg1}.
\begin{figure}[ht]
    \centering
    \includegraphics[width=6cm]{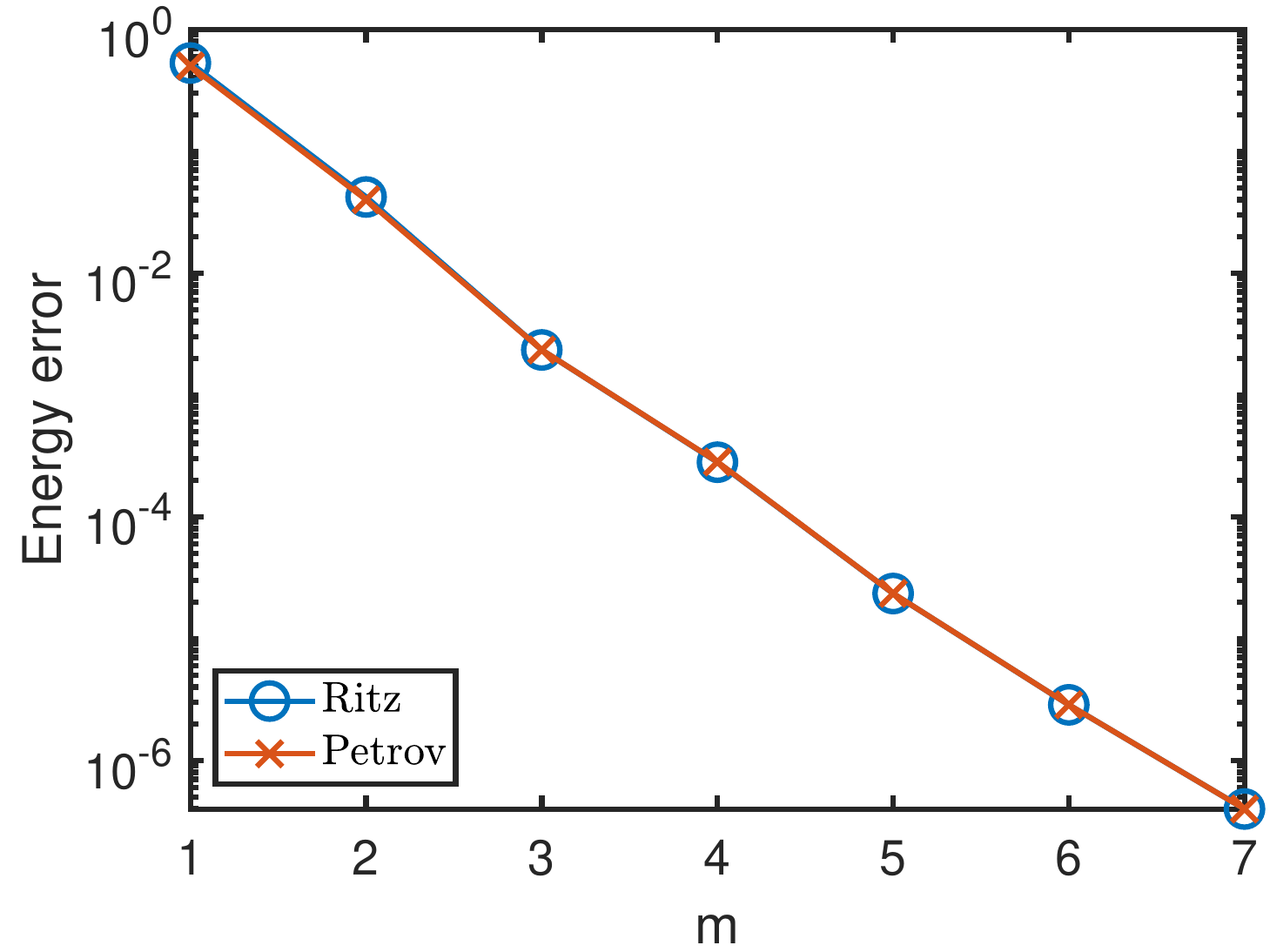}
    \includegraphics[width=6cm]{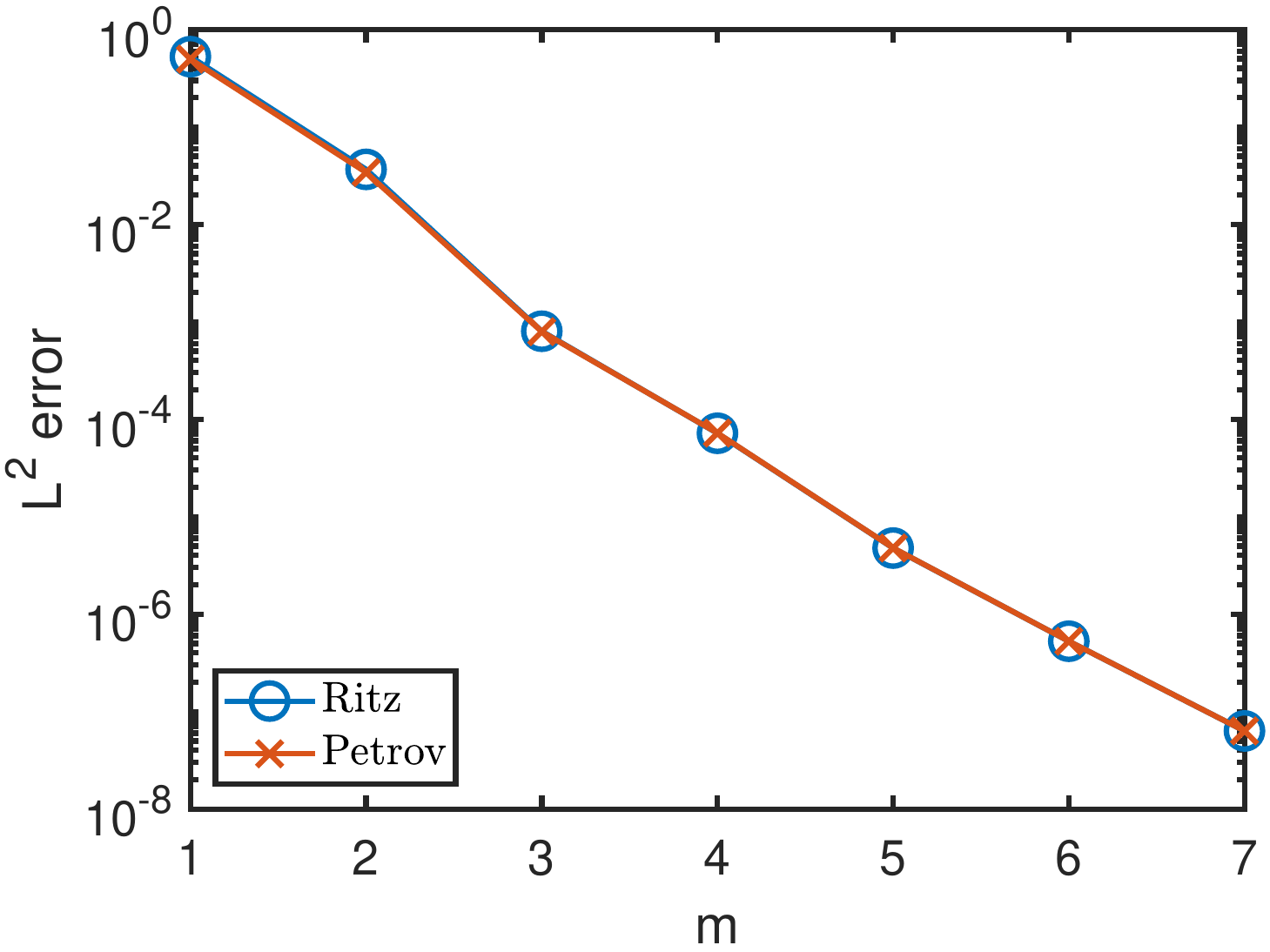}
    \caption{Numerical results for the high wavenumber example. Left: $e_{\cH}$ versus $m$; right: $e_{L^2}$ versus $m$.}
    \label{fig:eg1}
\end{figure}
We observe that the online basis approaches achieve nearly exponential decaying errors with respect to $m$. The difference between the Ritz-Galerkin and Petrov-Galerkin approaches is almost negligible.  We can see that a few basis per edge suffice for very high accuracy.

\yc{
Furthermore, we make some comparison between our edge coupling approach (the Ritz-Galerkin version) and the PUM approach reported in \cite{ma2021wavenumber}. We adopt the same setting there with $k=100$, $H=1/20$, $h=1/1000$ and vary the number of edge basis functions in each $e \in \cE_H$, choosing $m=2,3,...,7$. We present the results in Figure \ref{fig:eg11}.
\begin{figure}[ht]
    \centering
    \includegraphics[width=6cm]{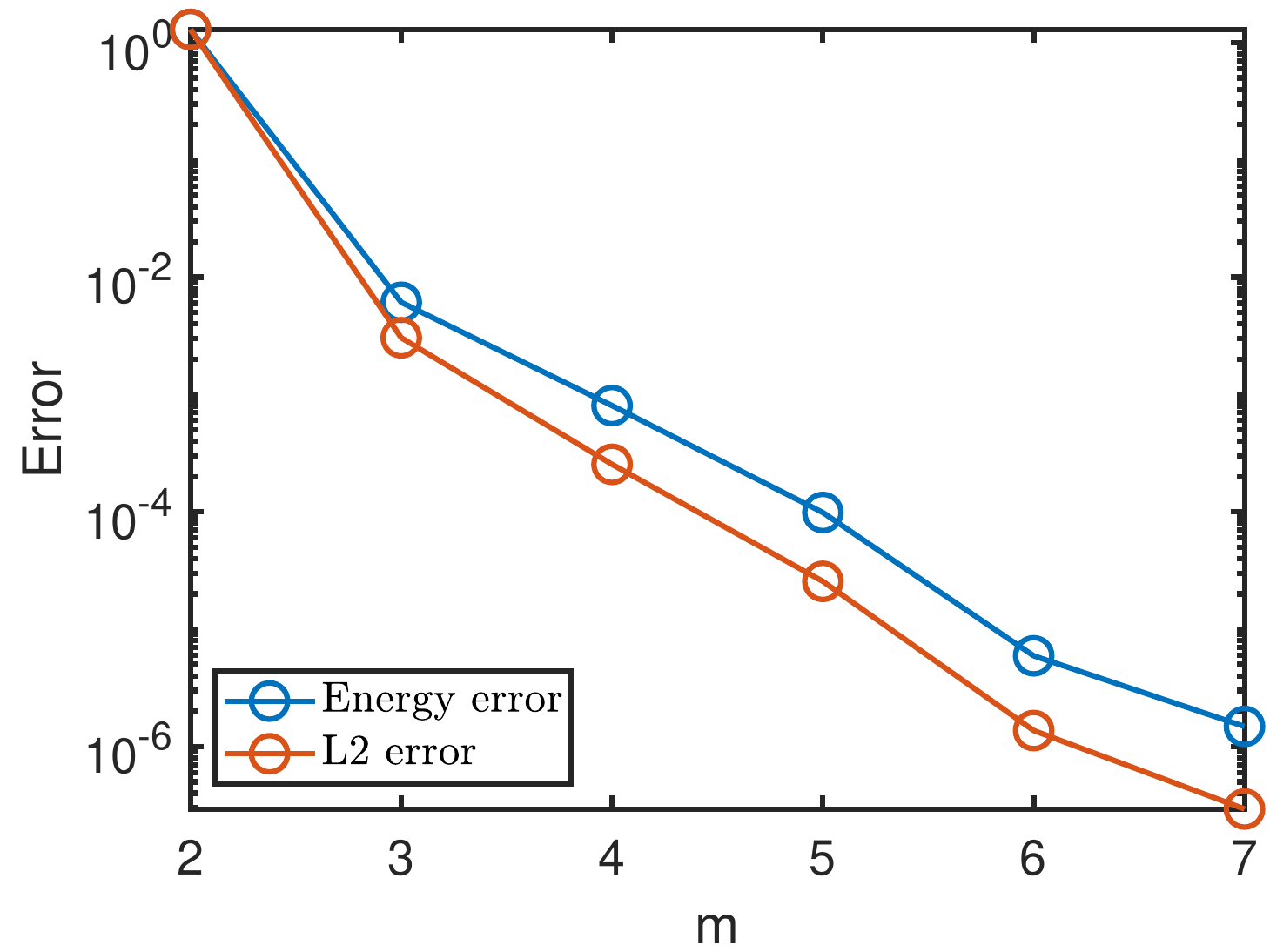}
    \caption{Numerical results for the high wavenumber example with $k=100$, $H=1/20$, $h=1/1000$.}
    \label{fig:eg11}
\end{figure}
We see that both errors decay very fast, and in particular,  the error in our method for $m=7$ is smaller than the error in \cite{ma2021wavenumber} with oversampling ratio $H_*/H=2$ and $35$ local basis per patch. With the same wavenumber and number of coarse patches, our method uses a slightly larger oversampling domain, while reducing the number of multiscale basis by a factor of around $35/(2\times7) = 2.5$. Here, we have used the fact that the number of edges is twice as many as domains in 2D. Nevertheless, the support of basis functions in our approach and PUM approach could be different by a factor of $2$, and the size of the overlapped domain decomposition in the PUM approach could also influence the result, leading to additional complexities for comparison. More detailed numerical study of the two approaches could be of future interests.
}

\subsection{A High Contrast Example: Mie resonances}
% In the third example, we consider $A$ to be a field of high contrast. More precisely, define the domain \begin{equation}
% \Omega_{\varepsilon}=(0.25,0.75)^{2} \cap \bigcup_{j \in \mathbb{Z}^{2}} \varepsilon\left(j+(0.25,0.75)^{2}\right)\, .
% \end{equation}
% Then, $A=1$ outside of $\Omega_{\varepsilon}$, and $A =\varepsilon^2$ in $\Omega_{\varepsilon}$ for $\epsilon = \frac{1}{8}$. 
In this example, we consider an $A(x)$ with high contrast channels. More precisely, define the domain \begin{equation}
 \Omega_{\varepsilon}=(0.25,0.75)^{2} \cap \bigcup_{j \in \mathbb{Z}^{2}} \varepsilon\left(j+(0.25,0.75)^{2}\right)\, ,
\end{equation}
and the coefficient is defined as
\begin{equation*}
    A(x)=\left\{ 
    \begin{aligned} 1, \quad &x\notin \Omega_{\varepsilon}\\
    \varepsilon^2, \quad &x\in \Omega_{\varepsilon}\, .
    \end{aligned}
    \right.
\end{equation*}
Here, $\varepsilon$ is a parameter controlling the contrast. We choose $\varepsilon=2^{-4}$ and visualize $\log_{10} A(x)$ in the left plot of Figure \ref{fig:contour_A}.
\begin{figure}[ht]
    \centering
    \includegraphics[width=6cm]{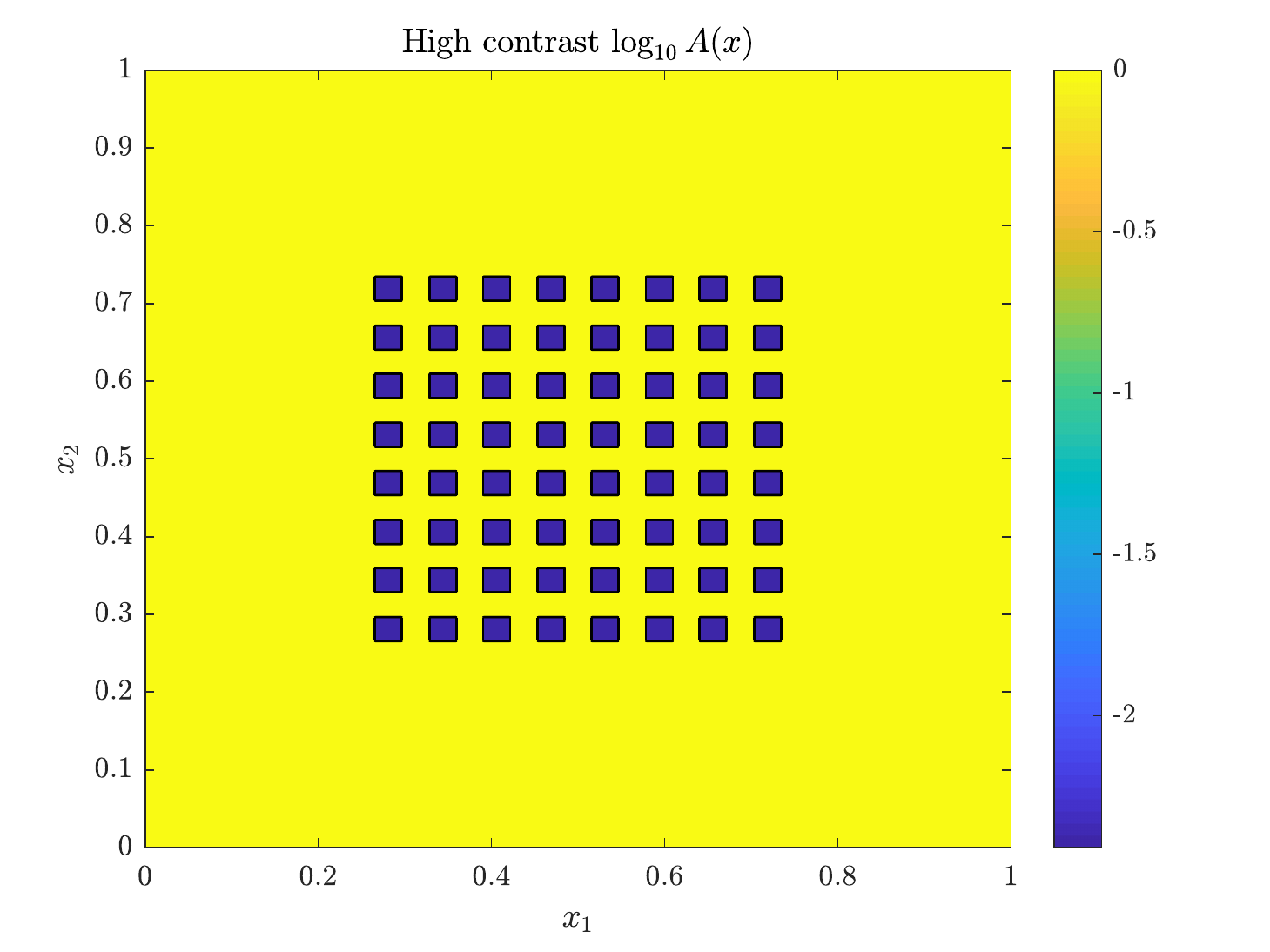}
    \includegraphics[width=6cm]{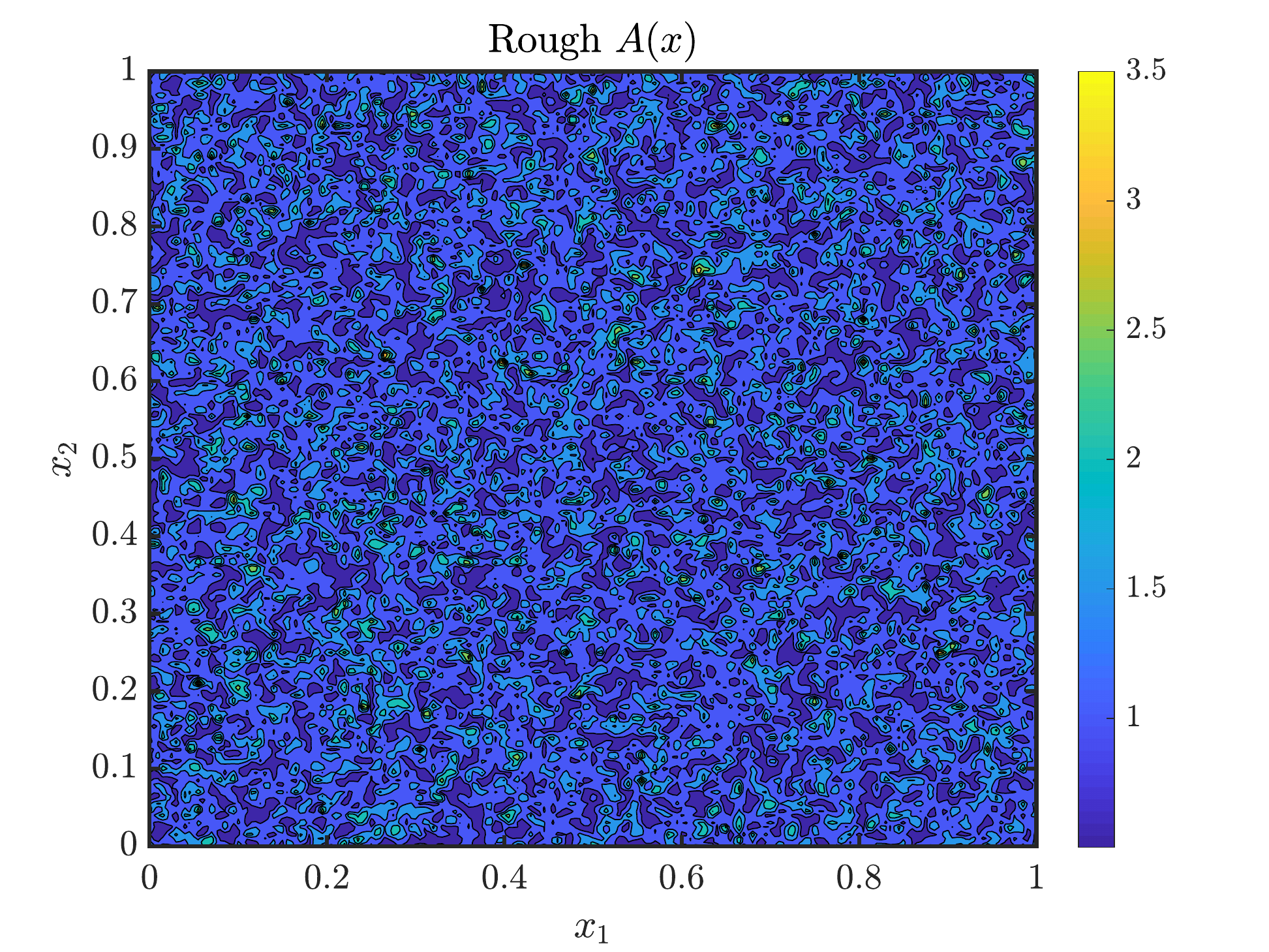}
    \caption{Left: the contour of $\log_{10} A$ for the high contrast example; right: the contour of $A$ for the rough media example.}
    \label{fig:contour_A}
\end{figure}

{We take $\beta=1, V=1, k=9$. For such a choice of $k$, the model exhibits an unusual behavior induced by Mie resonances in the small inclusions; see \cite{ohlberger2018new,peterseim2020computational}. An accurate numerical solution for this model would be hard to compute and it serves as a proper benchmark for our method.}  The right hand side is
\begin{equation*}
    f(x_1,x_2)=\left\{
    \begin{aligned}10000\exp(-\frac{1}{1-400\times \mathrm{dist}(x,z)^2}), &\ \mathrm{dist}(x,z)^2<\frac{1}{400}\\
    0, &\ \text{otherwise}\, , \end{aligned}\right.
\end{equation*}
where $z=(0.125,0.5)$ and $\mathrm{dist}(x,z)^2 = (x_1-0.125)^2+(x_2-0.5)^2$. 
We impose the homogeneous Robin boundary condition on $\partial \Omega$.
We take the fine mesh $h=2^{-9}$ and the coarse mesh $H=2^{-5}$.  As before we take $m=1,2,...,7$ and the numerical results are shown in Figure \ref{fig:eg3}. {A nearly exponential rate of convergence is observed consistently, and in this particular example, the Ritz method \yw{slightly outperforms} the Petrov method.}
\begin{figure}[ht]
    \centering
    \includegraphics[width=6cm]{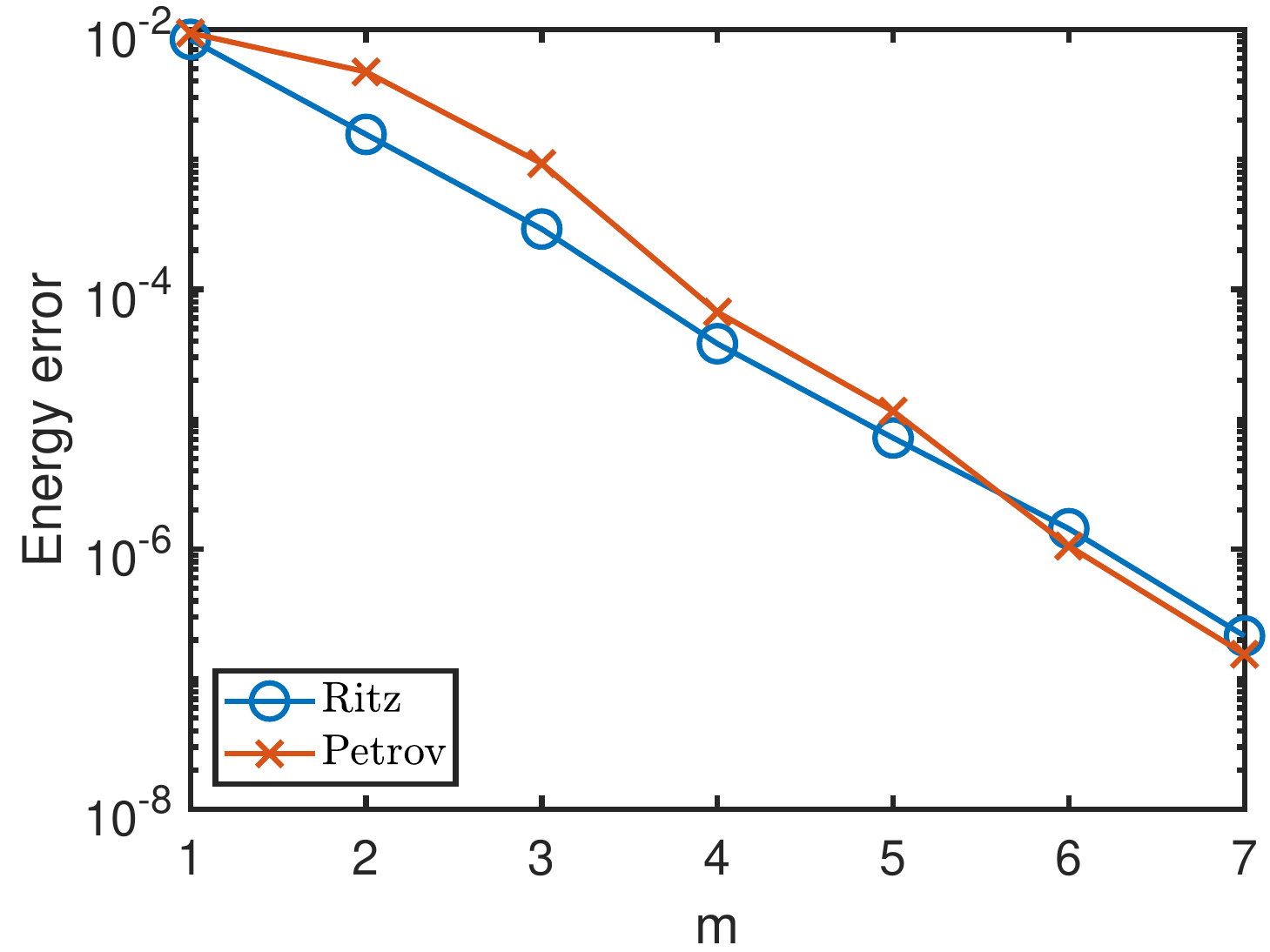}
    \includegraphics[width=6cm]{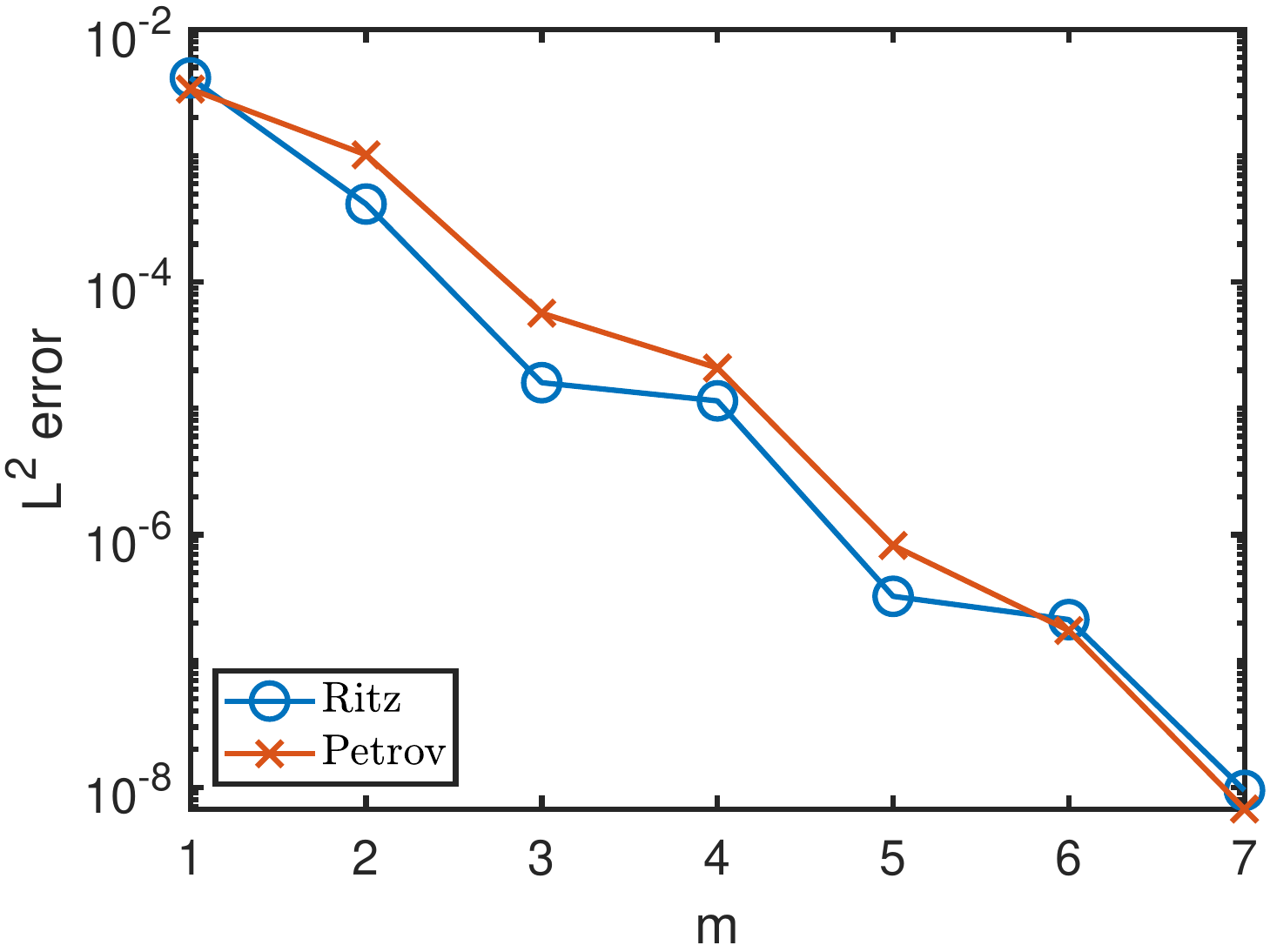}
    \caption{Numerical results for the high contrast example. Left: $e_{\cH}$ versus $m$; right: $e_{L^2}$ versus $m$.}
    \label{fig:eg3}
\end{figure}
\subsection{An Numerical Example with Mixed Boundary and Rough Field}
In the last example, we consider a mixed boundary problem. We impose the
homogeneous Dirichlet boundary condition on $(x_1,0), x_1 \in [0,1]$, the homogeneous Neumann boundary condition on $(x_1,1), x_1 \in [0,1]$, and the homogeneous Robin boundary condition on the other two parts of $\partial \Omega$. 
We choose $A(x)$ to be a realization of some random field; more precisely,
\begin{equation}
A(x)=|\xi(x)|+0.5\, ,
\end{equation}
        where the field $\xi(x)$ satisfies \[\xi(x)=a_{11}\xi_{i,j}+a_{21}\xi_{i+1,j}+a_{12}\xi_{i,j+1}+a_{22}\xi_{i+1,j+1}, \ \text{if}\ x \in [\frac{i}{2^{7}},\frac{i+1}{2^{7}})\times [\frac{j}{2^{7}},\frac{j+1}{2^{7}})\, .\]
        Here, $\{\xi_{i,j}, 0\leq i,j \leq 2^7 \}$ are i.i.d. unit Gaussian random variables. In addition, $a_{11}=(i+1-2^7x_1)(j+1-2^7x_2)$, $a_{21}=(2^7x_1-i)(j+1-2^7x_2)$, $a_{12}=(i+1-2^7x_1)(2^7x_2-j)$, $a_{22}=(2^7x_1-i)(2^7x_2-j)$ are interpolating coefficients to make $\xi(x)$ piecewise linear. A sample from this field is displayed in the right plot of Figure \ref{fig:contour_A}.

Moreover, we also take $V(x)$ and $\beta(x)$ as independent samples drawn from this random field. We choose the wavenumber $k=2^5$, the right hand side 
{$f(x_1,x_2)=x_1^4-x_2^3+1$}, the fine mesh $h=2^{-10}$ and the coarse mesh $H=2^{-5}$. Again we take $m=1,2,...,7$ and present the numerical results in Figure \ref{fig:eg4}. 
\begin{figure}[ht]
    \centering
    \includegraphics[width=6cm]{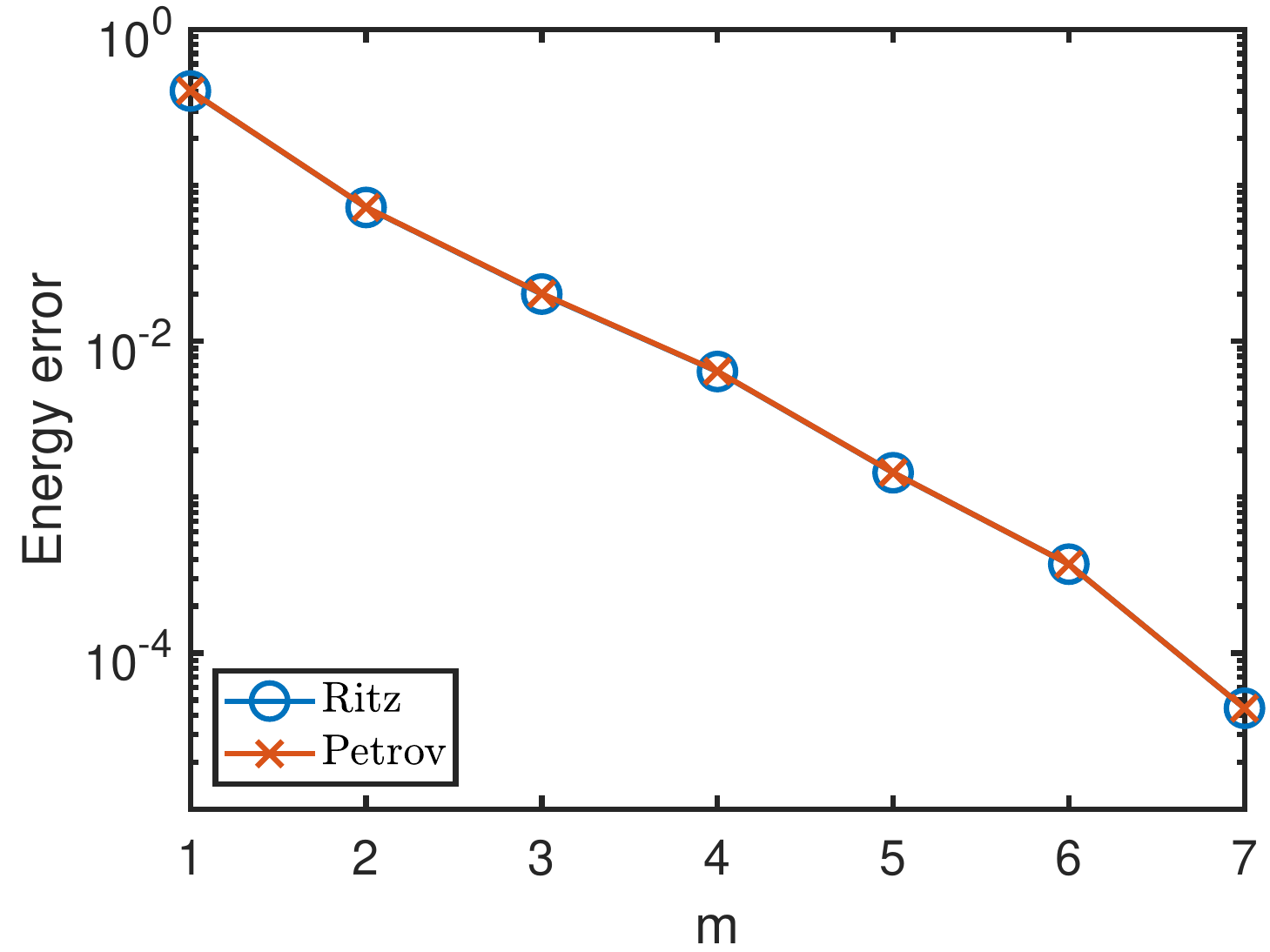}
    \includegraphics[width=6cm]{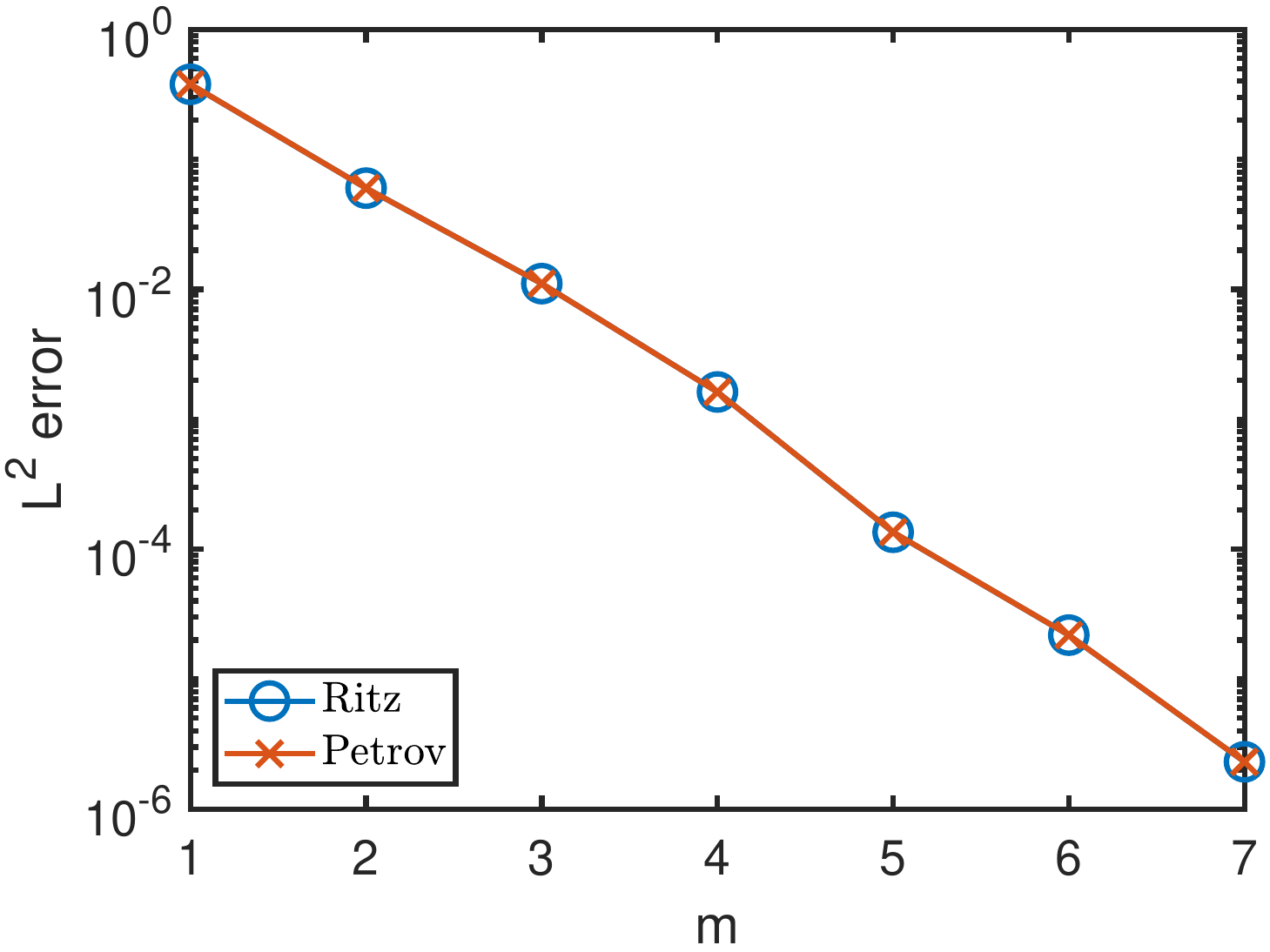}
    \caption{Numerical results for the mixed boundary and rough field example. Left: $e_{\cH}$ versus $m$; right: $e_{L^2}$ versus $m$.}
    \label{fig:eg4}
\end{figure}
A nearly exponential rate of convergence is still observed for this challenging example. The differences between the Ritz-Galerkin method and Petrov-Galerkin method is very mild. 

{It is worth noting that this example is constructed artificially, mixing different kinds of boundary conditions and rough coefficients, without taking into account the analytical properties of this combination. Thus, the numerical results for this example demonstrate the effectiveness of our multiscale methods in a more general setting. Moreover, our right hand side $f$ is global, so most oversampling bubble parts would be non-zero.}
\subsection{Summary}
We summarize what we have observed in these numerical examples. \yw{Both algorithms lead to a nearly exponential rate of convergence with respect to $m$, and we are able to use the offline-computed Galerkin basis to solve for multiple right-hand sides.} %Online basis approaches are more accurate than offline basis approaches, especially in the regime where the error is driven to be extremely small; however, the latter will be more computationally efficient since there is no need to update the stiffness matrix for different right hand sides. As the difference between the online and offline approaches is not very significant, it is preferable to use the offline approaches in practice, especially for the setting of multiple $f(x)$; for computation of very high accuracy, it may be more preferable to use online basis.}

%Moreover, it is observed that the difference between the Ritz-Galerkin and the Petrov-Galerkin approaches is very mild. As mentioned in Remark \ref{rmk: benefit PG no additional terms}, the Petrov-Galerkin approach is computationally more efficient since one does not need to compute the additional effective equation for $u^\sfh$.
{Moreover, it is observed that the difference between the Ritz-Galerkin and the Petrov-Galerkin approaches is very mild in most cases, but sometimes Ritz-Galerkin method can have better performances.
Therefore, we recommend using the Ritz-Galerkin approach in practice.}
\section{Proofs}
\label{theory}
This section presents the theoretical proofs in this paper. Some proofs are similar to those in the elliptic case. We will  refer these proofs to the corresponding proofs in the elliptic case \cite{chen2020exponential}, while we will make relevant remarks on possible changes and modifications.
\subsection{Proof of Proposition \ref{prop: C alpha estimate}}
\label{subsec: Proof of Proposition prop: C alpha estimate}
In this subsection, we provide the proof of the qualitative version of $C^{\alpha}$ estimate. It is a direct application of related results for elliptic equations. 
%We need to be careful to apply these results so that the constant is independent of $k$.
\begin{proof}
We note that the Helmholtz PDE \eqref{eqn:Helmholtz smooth k} is equivalent to 
\begin{equation}
\label{reformulated eqn}
\begin{cases}
-\nabla \cdot(A\nabla u)=f+k^{2}V^2 u, \ \text{in} \ \Omega\\
u=0, \ \text{on} \ \Gamma_D\\
A\nabla u\cdot\nu=T_ku, \ \text{on} \  \Gamma_N \cup \Gamma_R \, .
\end{cases}
\end{equation}
Since $f\in L^2(\Omega)$, we know by the a priori estimate of the Helmholtz equation that $u\in H^1(\Omega)$. Therefore  we can regard \eqref{reformulated eqn} as an elliptic PDE with $k^{2}V^2 u$ known as a part of the right hand side. This PDE has its right hand side in $L^2(\Omega)$ and has $u$ as its solution. We can invoke the result in Remark 6.5 of \cite{griepentrog2001linear}, which concludes that $u$ lies in some H\"older space $C^{\alpha}(\Omega)$ such that
\begin{equation*}
    \|u\|_{C^{\alpha}(\Omega)}\leq C(\|f\|_{L^2(\Omega)}+k^2\|u\|_{L^2(\Omega)})\, ,
\end{equation*}
for some H\"older exponent $\alpha \in (0,1)$ and $C$.
\end{proof}

\subsection{Proof of Proposition \ref{prop: C alpha interpolation residue}}
\label{sec: Proof of Proposition prop: C alpha interpolation residue}
The proof relies on the fact that any function $v$ on $e$ belonging to $H^{1/2}(e)\bigcap C^{\alpha}(e)$ and vanishing at $\partial e$ will be in the space $H_{00}^{1/2}(e)$; see Proposition 2.1 in \cite{chen2020exponential} for detailed arguments of this fact. Then, $R_e\tilde{u}^{\sfh} \in H^{1/2}(e)\bigcap C^{\alpha}(e)$ and vanishes at $\partial e$, so it belongs to $H_{00}^{1/2}(e)$.

\subsection{Proof of Theorem \ref{thm: Edge coupling error estimate}}
\label{sec:Proof of Theorem thm: Edge coupling error estimate}
We decompose the energy norm into the contribution from each element $T \in \cT_H$:
    \[\|u^{\sfh}-I_Hu^{\sfh}-\sum_{e \in \cE_H} v_e\|^2_{\cH(\Omega)}=\sum_{T\in\cT_H}\|u^{\sfh}-I_Hu^{\sfh}-\sum_{e \sim T} v_e\|^2_{\cH(T)}\, , \]
    where we have used the fact that $v_e = 0$ in $T$ if $e$ and $T$ are not neighbors.
    
   Let us fix an element $T$. For each $e \sim T$, the trace of the function $u^{\sfh}-I_Hu^{\sfh}-\sum_{e \in T} v_e$ on $e$ is $\tilde{u}^{\sfh}-I_H\tilde{u}^{\sfh}-\tilde{v}_e\in H^{1/2}_{00}(e)$. We can extend this trace to $\partial T\backslash e$ by $0$ to get an $H^{1/2}(\partial T)$ boundary data. Then, this boundary data can be used to define a Helmholtz-harmonic function in $T$, via the correspondence \eqref{eqn: edge bulk correspondence}. Using the triangle inequality and the Cauchy-Schwarz inequality, we get
     \[ \|u^{\sfh}-I_Hu^{\sfh}-\sum_{e \sim T} v_e\|^2_{\cH(T)}\leq C_{\text{mesh}}\sum_{e \sim T} \|P_e(\tilde{u}^{\sfh}-I_H\tilde{u}^{\sfh})-\tilde{v}_e\|^2_{\cH_T^{1/2}(e)}\, ,\]
     where the $\cH_T^{1/2}(e)$ norm of a function $\tilde{\psi} \in H_{00}^{1/2}(e)$ is defined as 
    \begin{equation}
    \|\tilde{\psi}\|_{\cH_T^{1/2}(e)}^2:=\int_{T}A|\nabla \psi |^2+k^2|V\psi|^2\, .
    \end{equation}
    The constant $C_{\text{mesh}}$ depends on the mesh type only; for example $C_{\text{mesh}}=4$ for the quadrilateral mesh and $C_{\text{mesh}}=3$ for the triangular mesh.
    Then, we sum the above inequality over all $T \in \cT_H$, which yields
    \begin{equation} \label{eq1}
\begin{split}
 \|u^{\sfh}-I_Hu^{\sfh}-\sum_{e \in \cE_H} v_e\|^2_{\cH(\Omega)}&\leq C_{\mathrm{mesh}}\sum_{T\in\cT_H}\sum_{e \sim T} \|P_e(\tilde{u}^{\sfh}-I_H\tilde{u}^{\sfh})-\tilde{v}_e\|^2_{\cH_T^{1/2}(e)} \\
 & =C_{\mathrm{mesh}}\sum_{e \in \cE_H}\|P_e(\tilde{u}^{\sfh}-I_H\tilde{u}^{\sfh})-\tilde{v}_e\|^2_{\cH^{1/2}(e)} \\
 & \leq C_{\mathrm{mesh}}\sum_{e \in \cE_H} \epsilon_{e}^2\, .
\end{split}
\end{equation}
The proof is completed.

\subsection{Proof of Theorem \ref{thm: svd exponential decay of Re}} 
\label{subsec: Proof of thm: svd exponential decay of Re}
This is the key theorem underlying the exponential convergence for approximating $u^\sfh$.
To prove it, we need to analyze the spectrum of the operator $R_e$ for each edge $e$. The treatments for interior edges and edges connected to the boundary are slightly different, due to the different boundary conditions involved. We will explain the proof for interior edges in detail and comment on the changes needed to be made for edges connected to the boundary.

Since this theorem is stated for all edges, we start by discussing some geometric relations that hold uniformly for all interior edges. 
\subsubsection{Geometric Relation}
Suppose $e$ is an interior edge, so that $e$ lies strictly in the interior domain of $\omega_e$; see Figure \ref{fig:os domain}. We describe some geometric relation\footnote{It is similar to that in Subsection 3.3.1 of \cite{chen2020exponential}.} between $e$ and $\omega_e$ that will be needed in our analysis.
Figure \ref{fig:omega omega star} illustrates our ideas for a uniform quadrilateral mesh. For each interior edge $e$, there exists two concentric rectangles $\omega \subset \omega^*$ with center being the midpoint $m_e$ of $e$, such that $e \subset \omega \subset \omega^* \subset \omega_e$; the center $m_e$ is the center of gravity of $\omega$ and $\omega^*$. We require $\omega^* \cap \partial \Omega = \emptyset$. 
    Moreover, one side of $\omega$ and $\omega^*$ should be parallel to $e$. We introduce three parameters $l_1,l_2,l_3$ to specify and describe the geometry:
    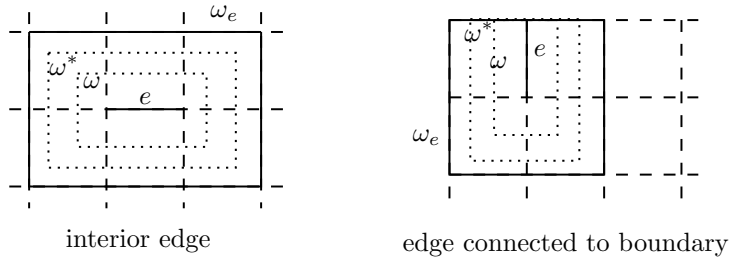
\begin{figure}[ht]
        \centering
\tikzset{every picture/.style={line width=0.75pt}} %set default line width to 0.75pt        

\begin{tikzpicture}[x=0.75pt,y=0.75pt,yscale=-1,xscale=1]
%uncomment if require: \path (0,236); %set diagram left start at 0, and has height of 236

%Shape: Grid [id:dp7277128431719605] 
\draw  [draw opacity=0][dash pattern={on 4.5pt off 4.5pt}] (60,64) -- (201.5,64) -- (201.5,163) -- (60,163) -- cycle ; \draw  [dash pattern={on 4.5pt off 4.5pt}] (70,64) -- (70,163)(109,64) -- (109,163)(148,64) -- (148,163)(187,64) -- (187,163) ; \draw  [dash pattern={on 4.5pt off 4.5pt}] (60,74) -- (201.5,74)(60,113) -- (201.5,113)(60,152) -- (201.5,152) ; \draw  [dash pattern={on 4.5pt off 4.5pt}]  ;
%Straight Lines [id:da3436007187161384] 
\draw    (109,113) -- (148,113) ;
%Straight Lines [id:da33091068634184717] 
\draw    (70,74) -- (187,74) ;
%Straight Lines [id:da3898875365039882] 
\draw    (187,74) -- (187,152) -- (70,152) -- (70,74) ;
%Shape: Grid [id:dp0022331647202128435] 
\draw  [draw opacity=0][dash pattern={on 4.5pt off 4.5pt}] (282,68) -- (407.5,68) -- (407.5,159) -- (282,159) -- cycle ; \draw  [dash pattern={on 4.5pt off 4.5pt}] (282,68) -- (282,159)(321,68) -- (321,159)(360,68) -- (360,159)(399,68) -- (399,159) ; \draw  [dash pattern={on 4.5pt off 4.5pt}] (282,68) -- (407.5,68)(282,107) -- (407.5,107)(282,146) -- (407.5,146) ; \draw  [dash pattern={on 4.5pt off 4.5pt}]  ;
%Straight Lines [id:da39106085320202644] 
\draw    (321,68) -- (321,107) ;
%Straight Lines [id:da784901099235253] 
\draw    (282,68) -- (282,146) -- (360,146) -- (360,68) -- cycle ;
%Shape: Rectangle [id:dp14428038897175832] 
\draw  [dash pattern={on 0.84pt off 2.51pt}] (94.5,95) -- (159.5,95) -- (159.5,132) -- (94.5,132) -- cycle ;
%Shape: Rectangle [id:dp3871898346094239] 
\draw  [dash pattern={on 0.84pt off 2.51pt}] (79.75,84.5) -- (174.25,84.5) -- (174.25,142.5) -- (79.75,142.5) -- cycle ;
%Shape: Rectangle [id:dp8027614531116153] 
\draw  [dash pattern={on 0.84pt off 2.51pt}] (304.5,68) -- (336.5,68) -- (336.5,126) -- (304.5,126) -- cycle ;
%Shape: Rectangle [id:dp8699564050337215] 
\draw  [dash pattern={on 0.84pt off 2.51pt}] (292.5,67.5) -- (347.5,67.5) -- (347.5,139) -- (292.5,139) -- cycle ;

% Text Node
\draw (124,103) node [anchor=north west][inner sep=0.75pt]   [align=left] {$\displaystyle e$};
% Text Node
\draw (160,59) node [anchor=north west][inner sep=0.75pt]   [align=left] {$\displaystyle \omega _{e}$};
% Text Node
\draw (87,172) node [anchor=north west][inner sep=0.75pt]   [align=left] {interior edge};
% Text Node
\draw (257,174) node [anchor=north west][inner sep=0.75pt]   [align=left] {edge connected to boundary};
% Text Node
\draw (323,79) node [anchor=north west][inner sep=0.75pt]   [align=left] {$\displaystyle e$};
% Text Node
\draw (262,122) node [anchor=north west][inner sep=0.75pt]   [align=left] {$\displaystyle \omega _{e}$};
% Text Node
\draw (96,95) node [anchor=north west][inner sep=0.75pt]   [align=left] {$\displaystyle \omega $};
% Text Node
\draw (78.75,84.5) node [anchor=north west][inner sep=0.75pt]   [align=left] {$\displaystyle \omega ^{*}$ };
% Text Node
\draw (288,67) node [anchor=north west][inner sep=0.75pt]   [align=left] {$\displaystyle \omega ^{*}$ };
% Text Node
\draw (301,85) node [anchor=north west][inner sep=0.75pt]   [align=left] {$\displaystyle \omega $};

\end{tikzpicture}
        \caption{Geometric relation $e\subset\omega\subset \omega^*\subset \omega_e$}
        \label{fig:omega omega star}
    \end{figure}
    \begin{enumerate}
        \item With respect to the center $m_e$, the two rectangles $\omega$ and $\omega^*$ are scaling equivalent, such that there exists $l_1>1$, $\omega^*-m_e=l_1\cdot (\omega-m_e)$. Here we use the notation that $t\cdot X :=\{tx:x\in X\}$ for a set $X$ and a scalar $t$. For our choice of $\omega_e$, the parameter $l_1$ can be selected to only depend on $c_0$ and $c_1$ in Subsection \ref{subsec: mesh structure of elements}. 
        \item The ratio of $\omega$'s larger side length over the smaller side length is bounded by a uniform constant $l_2>1$ that depends on $c_0$ and $c_1$ only.
        \item There is a constant $l_3 > 1$ depending on $c_0$ and $c_1$ only such that $l_3\cdot e \subset \omega$. 
    \end{enumerate}
      We note that $l_1,l_2,l_3$ are universal constants for all interior edges. All three parameters depend on $c_0,c_1$ only. We introduce these parameters in order to get a uniform treatment for every interior edge. Indeed, several constants in our estimates depend on $l_1,l_2,l_3$, but not on $k$ and $H$, uniformly for all interior edges.
      
    \subsubsection{Main Idea of the Proof}
    \label{sec-main-idea-of-proof}
   In the following, we explain the main ideas of our proof. Recall the target is to show the left singular values of $R_e$ decays nearly exponentially fast. Similar to the rationale behind \eqref{eqn-approximation-property-singular-values}, it suffices to show there exists an $m-1$ dimensional space $W_{m,e} \subset H_{00}^{1/2}(e)$ such that
   \begin{equation}
   \label{eqn-to-show}
        \min_{\tilde{v}_{e} \in W_{m,e}} \|R_e v-\tilde{v}_{e}\|_{\cH^{1/2}(e)}\leq C_{\epsilon}\exp\left(-m^{(\frac{1}{d+1}-\epsilon)}\right)\|v\|_{\cH(\omega_e)}\, ,
    \end{equation}
    for any $v \in U(\omega_e)$. We also use $U(\omega')$ to denote the function space in $\omega'$ defined via \eqref{eqn-Helmholtz-harmonic-space} with $\omega_e$ replaced by any $\omega'$. Our proof contains two main steps, summarized in the following two lemmas.
\begin{lemma}
\label{lemma-compactness-restriction-of-harmonic}
 For $d>0$ and any $v \in U(\omega^*)$, there exists an $m-1$ dimensional space $\Phi_{m,e} \subset U(\omega)$ such that 
\begin{equation}
      \min_{\chi \in \Phi_{m,e}}\| v-\chi\|_{\cH(\omega)}\leq C_{\epsilon}\exp\left(-m^{(\frac{1}{d+1}-\epsilon)}\right) \|v\|_{\cH(\omega^*)}\, ,
    \end{equation}
    for some $C_{\epsilon}$ independent of $k$ and $H$.
\end{lemma}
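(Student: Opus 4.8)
The plan is to prove the bound by a Caccioppoli-type estimate combined with an iterated low-rank approximation over a family of nested concentric domains, following the elliptic template of \cite{chen2020exponential} but with the indefinite term controlled throughout by Assumption \ref{small mesh1}; that assumption is exactly what makes the local Helmholtz operator on the scale of $\omega_e$ behave like an elliptic one. Throughout, let $\ell$ denote the diameter of $\omega^*$, which is comparable to $H$, and we use repeatedly that $k \lesssim 1/H \lesssim 1/\ell$. The first step is a Caccioppoli inequality for Helmholtz-harmonic functions: for concentric $D' \Subset D$ with $\rho := \dist(\partial D',\partial D)$ and any $v \in U(D)$, one has $\|v\|_{\cH(D')} \le C\rho^{-1}\|v\|_{L^2(D)}$ with $C$ independent of $k$ and $H$. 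This follows by testing the weak equation against $\eta^2\bar v$, where $\eta$ is a cutoff with $\eta \equiv 1$ on $D'$ and $|\nabla\eta| \lesssim \rho^{-1}$, and absorbing the gradient cross-term; the only feature absent from the elliptic case is the term $k^2\int V^2\eta^2|v|^2$, which is harmless because $k^2 \lesssim \rho^{-2}$ on all the layers introduced below.

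Second, using the geometric relations $e \subset \omega \subset \omega^* \subset \omega_e$ already set up, I would insert nested concentric domains $\omega = D_0 \Subset D_1 \Subset \cdots \Subset D_n = \omega^*$ with uniform spacing $\dist(\partial D_{j-1},\partial D_j) \sim \ell/n$. Interior elliptic regularity for Helmholtz-harmonic functions (Schauder estimates on this scale, again with $k$-independent constants by Assumption \ref{small mesh1}) makes every $v \in U(D_j)$ smooth on $D_{j-1}$. For each $j$ I would then build a fixed finite-dimensional space $\Psi_j \subset U(D_{j-1})$ of Helmholtz-harmonic functions -- obtained by projecting $v$ onto a quasi-uniform polynomial/finite-element space on $D_{j-1}$ at mesh scale comparable to the layer thickness and passing to its Helmholtz-harmonic part -- such that $v|_{D_{j-1}}$ is approximated by $\Psi_j$ in the $\cH(D_{j-1})$ norm with a fixed contraction factor $\theta < 1$, where $\dim\Psi_j \lesssim n^d$. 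The balance is that the layer has thickness $\sim \ell/n$, so the Caccioppoli loss is $\sim n/\ell$, while polynomial approximation at a mesh comparable to $\ell/n$ gains a matching power; taking the mesh a fixed fraction finer then yields the contraction. Crucially, because $\Psi_j$ consists of Helmholtz-harmonic functions, the residual $v-(\text{approximant})$ is again Helmholtz-harmonic, so the step can be re-applied on the next layer.

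Third, composing the restriction maps $U(D_n) \to U(D_{n-1}) \to \cdots \to U(D_0)$ and telescoping over the $n$ layers, the cumulative approximating space $\Phi_{m,e} := \Psi_1 + \cdots + \Psi_n$ has $\dim\Phi_{m,e} \lesssim n\cdot n^d = n^{d+1}$, while the error contracts by $\theta$ at each layer, so $\min_{\chi\in\Phi_{m,e}}\|v-\chi\|_{\cH(\omega)} \lesssim \theta^n\|v\|_{\cH(\omega^*)}$ (a final application of the Caccioppoli estimate on a slightly enlarged $\omega$ converts an $L^2$-bound into the energy norm, the scale factors balancing because $k \lesssim 1/H$). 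Setting $m-1 \asymp n^{d+1}$, i.e.\ $n \asymp m^{1/(d+1)}$, gives $\theta^n = \exp(-c\,m^{1/(d+1)})$; absorbing the polynomial prefactor in $n$ and the mismatch of constants into $C_\epsilon$, and relaxing the exponent to $m^{1/(d+1)-\epsilon}$, yields the stated bound, with $C_\epsilon$ independent of $k$ and $H$ since every constant entering (Caccioppoli, interior regularity, polynomial approximation, and the geometric parameters $l_1,l_2,l_3$) is scale-invariant once the reduction $kH \lesssim 1$ has been used. For edges connected to the boundary one repeats the argument with $\omega,\omega^*$ taken as the half-rectangles of Figure \ref{fig:omega omega star} and the Robin/Neumann condition built into $U(\cdot)$, choosing the cutoff to respect the piece of $\partial\Omega$ already carried by the space; nothing else changes.

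I expect the main obstacle -- and the one genuinely new ingredient beyond \cite{chen2020exponential} -- to be keeping the indefinite term $-k^2V^2v$ under control uniformly in $k$ and $H$ at every stage: in the Caccioppoli inequality, in the interior regularity constants, and in the stability of the Helmholtz-harmonic extension used to define $\Psi_j$. Assumption \ref{small mesh1} is precisely what makes this work, by forcing $kH \lesssim 1$ so that the reaction term is always dominated by the diffusion term on the relevant scales. A secondary technical point is the bookkeeping that simultaneously keeps the per-layer approximants Helmholtz-harmonic (so the iteration closes) and achieves a $(k,H)$-independent contraction factor with only $O(n^d)$ degrees of freedom per layer; this is handled exactly as in the elliptic case and, for the sharper exponent of Remark \ref{rmki}, as in \cite{ma2021novel}.
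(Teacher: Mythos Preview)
Your proposal is correct and follows the same overall architecture as the paper's proof: a Caccioppoli inequality for Helmholtz-harmonic functions (the paper's Lemma~\ref{lemma-H-by-L2}), a per-layer low-rank approximation (the paper's Lemma~\ref{lemma-L2-by-H}), and an iteration over a nested family $\omega=\omega_0\subset\cdots\subset\omega_N=\omega^*$ that telescopes the two into a geometric contraction. Your handling of the indefinite term via $kH\lesssim 1$ is exactly how the paper absorbs $k^2V^2$ into the $\rho^{-2}$ scale in the Caccioppoli step.

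The one substantive difference is how the per-layer approximating space is produced. The paper does \emph{not} invoke Schauder estimates or build FEM/polynomial interpolants and then pass to their Helmholtz-harmonic parts. Instead it uses a pure $n$-width argument: the embedding $(\mathcal H(\omega_j),\|\cdot\|_{\mathcal H})\hookrightarrow L^2(\omega_j)$ has $n$-th singular value $\lesssim Hn^{-1/d}$ by Weyl asymptotics for the Neumann Laplacian, and since $U(\omega_j)$ is a closed subspace of $\mathcal H(\omega_j)$, the max--min characterization immediately gives an $n$-dimensional $W_n\subset U(\omega_j)$ with the same $L^2$ rate. This sidesteps the bookkeeping you flag about keeping approximants Helmholtz-harmonic while retaining the approximation quality, and it makes interior regularity unnecessary. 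Correspondingly, the paper keeps $n$ (functions per layer) and $N$ (number of layers) as \emph{independent} parameters, obtaining error $(Ct^{-1}n^{-1/d})^N$ with $t\sim 1/N$, and only at the end balances them by taking $N\sim m^{q/(q+1)}$, $n\sim m^{1/(q+1)}$ for $q<1/d$; your fixed-$\theta$ version with $n$ layers of $\sim n^d$ functions is a valid special case of this balancing. Both routes yield the same exponent, but the paper's is shorter and avoids the constructive projection step.
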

\begin{lemma}
\label{lemma: bound H 1/2 by energy norm}
 For $d=2$ and any $v \in H^1(\omega)$ and $\nabla \cdot (A\nabla v) \in L^2(\omega)$, it holds that
\begin{equation}
\left\|R_e v\right\|_{\cH^{1/2}(e)}\leq C 
\left(\|v\|_{\cH(\omega)} + H\|\nabla \cdot (A\nabla v)+k^2V^2v\|_{L^2(\omega)}\right) \, ,
\end{equation}
for some $C$ independent of $k$ and $H$.
\end{lemma}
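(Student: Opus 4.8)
\emph{Plan.} By Definition~\ref{def: H 1/2 norm}, $\|R_e v\|_{\cH^{1/2}(e)}$ is the $\cH(\Omega)$-energy of the function $\psi\in V^{\sfh}$ whose trace on $E_H$ equals $R_e v$ on $e$ and $0$ on every other edge; because $R_e v$ vanishes at $\partial e$ and $\psi$ is zero on all edges but $e$, the function $\psi$ vanishes on the whole skeleton outside $e$, hence is supported on the two elements $T_1,T_2$ that share $e$ and vanishes on an edge of each. Two reductions then do most of the work. First, pick a thin rectangular neighbourhood $S$ of $e$ with $\overline S\subset\omega$ and split $v$ on $S$ into its Helmholtz-harmonic part $v^{\sfh}_S$ and bubble part $v^{\sfb}_S$, exactly as in \eqref{eqn:Helmholtz decomposed} with $T$ replaced by $S$ (well posed because $S$ carries a Poincar\'e inequality with constant $O(H)$, so the proof of Proposition~\ref{prop-well-definedness-local-prob} goes through). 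Then $R_e v=R_e v^{\sfh}_S+R_e v^{\sfb}_S$, and since $v^{\sfb}_S$ solves a local Helmholtz problem with right-hand side $\rho:=-\nabla\cdot(A\nabla v)-k^2V^2v$ and homogeneous Dirichlet data, the argument of Proposition~\ref{prop: small os bubble} gives $\|R_e v^{\sfb}_S\|_{\cH^{1/2}(e)}\le CH\|\rho\|_{L^2(\omega)}$, while $\|v^{\sfh}_S\|_{\cH(S)}\le\|v\|_{\cH(\omega)}+CH\|\rho\|_{L^2(\omega)}$. Hence it suffices to prove $\|R_e v\|_{\cH^{1/2}(e)}\le C\|v\|_{\cH(\omega)}$ under the extra hypothesis that $v$ is Helmholtz-harmonic on $T_1\cup T_2$.

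Second, I bound $\|\psi\|_{\cH(\Omega)}$ by a competitor. For any $w\in\cH(\Omega)$ with $w|_{E_H}=\psi|_{E_H}$ one has $w-\psi\in V^{\sfb}$, so $a(\psi,w-\psi)=0$; combining this with the element-wise coercivity $\Re a_T(\psi,\psi)\ge\tfrac13\|\psi\|^2_{\cH(T)}$ (the estimate established inside the proof of Proposition~\ref{prop: bubble is small}, valid on $T_1,T_2$ since $\psi$ vanishes on an edge of each) and the continuity bound \eqref{eqn: continuity estimate} gives $\tfrac13\|\psi\|^2_{\cH(\Omega)}\le\Re a(\psi,\psi)=\Re a(\psi,w)\le C_c\|\psi\|_{\cH(\Omega)}\|w\|_{\cH(\Omega)}$, i.e.\ $\|\psi\|_{\cH(\Omega)}\le 3C_c\|w\|_{\cH(\Omega)}$. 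It then remains to exhibit one $w$, supported in $T_1\cup T_2$, with $w|_e=R_e v$, $w=0$ on $\partial(T_1\cup T_2)$, and $\|w\|_{\cH(T_1\cup T_2)}\le C\|v\|_{\cH(\omega)}$. I would build it from $v$: in coordinates in which $e$ is a horizontal segment, take $w=(v-\ell_e^{2D})\eta$ plus a small corrector, where $\ell_e^{2D}$ is the function affine in the tangential variable that matches $v$ at the two endpoints of $e$ (so $(v-\ell_e^{2D})|_e=R_e v$, and $v-\ell_e^{2D}$ vanishes at $\partial e$), $\eta$ is a Lipschitz cut-off equal to $1$ on $e$ outside $\delta$-balls around $\partial e$, equal to $0$ on $\partial(T_1\cup T_2)$, with $|\nabla\eta|\le C/H$, and the corrector, supported in those $\delta$-balls, repairs the trace of $w$ on $e$ there. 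The energy of $w$ is then controlled by a Caccioppoli/Poincar\'e inequality for the Helmholtz-harmonic $v$ (bounding $\|\nabla v\|_{L^2}$ and the tangential slope entering $\ell_e^{2D}$ by $H^{-1}\|v-\bar v\|_{L^2}$ on a slightly larger patch), the trace inequality on $e$, and the interior $C^{\alpha}$ estimate for the elliptic reformulation $-\nabla\cdot(A\nabla v)=k^2V^2 v$, whose local H\"older constant is independent of $k$ because Assumption~\ref{small mesh1} keeps $kH$ bounded (the same reason every $k^2V^2$ contribution is of lower order). All constants depend only on $A_{\min},A_{\max},V_{\min},V_{\max},C_c$ and the mesh parameters. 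Summing over $T_1,T_2$ finishes the estimate.

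The hard part is precisely this last construction and estimate: $R_e$ contains a nodal interpolation, which is not $H^1$-stable in two dimensions, so the asserted bound is false for a generic $H^1$ function, and the proof genuinely relies on the interior H\"older regularity of Helmholtz-harmonic functions (this is where $d=2$, and in the background $d\le 3$ in Proposition~\ref{prop: C alpha estimate}, enter) together with a wavenumber-independent local constant. One must also quantify that $R_e v=v|_e-\ell_e(v)$ decays like $\dist(\cdot,\partial e)^{\alpha}$ near the endpoints of $e$, so that both its Lions--Magenes norm and the corrector near $\partial e$ are absorbed into $\|v\|_{\cH(\omega)}$. Edges $e$ meeting $\Gamma_N\cup\Gamma_R$ are handled by the same scheme, with the Dirichlet coercivity/orthogonality on the affected element replaced by its Robin-boundary counterpart; no new phenomenon arises, since that boundary term is already carried by the continuity estimate \eqref{eqn: continuity estimate}, exactly as in the elliptic analysis of \cite{chen2020exponential}.
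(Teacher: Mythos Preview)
Your route differs substantially from the paper's. The paper's proof is a three-line algebraic reduction: it cites Lemma~3.9 of \cite{chen2020exponential}, which already gives $\|R_e v\|_{\cH^{1/2}(e)}\le C\bigl(\|A^{1/2}\nabla v\|_{L^2(\omega)}+H\|\nabla\cdot(A\nabla v)\|_{L^2(\omega)}\bigr)$ with a constant independent of $k$ (there is no $k$ in that elliptic lemma), and then writes $H\|\nabla\cdot(A\nabla v)\|_{L^2}\le Hk^2V_{\max}^2\|v\|_{L^2}+H\|\nabla\cdot(A\nabla v)+k^2V^2v\|_{L^2}$, absorbing the first term into $\|kVv\|_{L^2}$ via $HkV_{\max}\le C$ (Assumption~\ref{small mesh}). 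All of the hard work---the interior $C^{\alpha}$ estimate, the competitor extension, the treatment of the endpoint behaviour near $\partial e$---lives entirely in the cited elliptic lemma. You are, in effect, re-deriving that elliptic result from scratch while carrying the Helmholtz lower-order term along; your competitor bound $\|\psi\|_{\cH(\Omega)}\le 3C_c\|w\|_{\cH(\Omega)}$ and the sketched construction of $w$ are correct in outline and are precisely the ingredients behind the elliptic lemma. What your approach buys is self-containment; what the paper's approach buys is brevity and a clean separation of the $k$-independent elliptic analysis from the trivial Helmholtz bookkeeping.

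There is, however, a gap in your first reduction: you invoke ``the argument of Proposition~\ref{prop: small os bubble}'' to obtain $\|R_e v_S^{\sfb}\|_{\cH^{1/2}(e)}\le CH\|\rho\|_{L^2(\omega)}$, but in this paper Proposition~\ref{prop: small os bubble} is proved \emph{by applying} Lemma~\ref{lemma: bound H 1/2 by energy norm} (see Subsection~\ref{subsec: Proof of Proposition prop: small os bubble}), so the citation is circular. The repair is easy---drop the splitting on $S$ altogether and run your competitor construction directly on $v$, letting the local $C^{\alpha}$ estimate for $-\nabla\cdot(A\nabla v)=k^2V^2v+\rho$ (with $L^2$ right-hand side) supply the $H\|\rho\|_{L^2(\omega)}$ contribution---but as written the step is not self-contained.
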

\yc{
\begin{remark}
Here in Lemma \ref{lemma: bound H 1/2 by energy norm}, the constant is independent of $k$ because we are using a local version of $C^{\alpha}$ regularity estimate, where in the local domain the operator behaves similarly to an elliptic operator; see the discussions preceding Assumption \ref{small mesh1}. This is different from the global $C^{\alpha}$ regularity estimate in the proof of Proposition \ref{prop: C alpha estimate}. See Subsection \ref{subsec: Proof of Lemma bound H 1/2 by energy norm} for details, where we only use $C^{\alpha}$ estimate of an elliptic equation.
\end{remark}}
We will defer the proofs of the two lemmas to Subsections \ref{sec-proof-lemma-compactness-restriction-of-harmonic} and \ref{subsec: Proof of Lemma bound H 1/2 by energy norm}, and describe how to prove Theorem \ref{thm: svd exponential decay of Re} using them here.
\begin{proof}[Proof of Theorem \ref{thm: svd exponential decay of Re}]
From the above discussion, it remains to show \eqref{eqn-to-show}. For $v \in \cH(\omega_e)$, we have $v \in \cH(\omega^*)$ and $\|v\|_{\cH(\omega^*)} \leq \|v\|_{\cH(\omega_e)}$. By Lemma \ref{lemma-compactness-restriction-of-harmonic}, we get
\[\min_{\chi \in \Phi_{m,e}}\| v-\chi\|_{\cH(\omega)}\leq C_{\epsilon}\exp\left(-m^{(\frac{1}{d+1}-\epsilon)}\right) \|v\|_{\cH(\omega_e)}\, . \]
Now since $v - \chi$ satisfies the condition in Lemma \ref{lemma: bound H 1/2 by energy norm} and $v$ and  $\chi$ both vanish under the operator $v \to \nabla \cdot (A\nabla v)+k^2V^2v$, we obtain
\[\min_{\chi \in \Phi_{m,e}}\| R_e v-R_e\chi\|_{\cH^{1/2}(e)}\leq CC_{\epsilon}\exp\left(-m^{(\frac{1}{d+1}-\epsilon)}\right) \|v\|_{\cH(\omega_e)}\, . \]
Thus, taking $W_{m,e}=R_e\Phi_{m,e}$ completes the proof.
\end{proof}
\subsubsection{Proof of Lemma \ref{lemma-compactness-restriction-of-harmonic}}
\label{sec-proof-lemma-compactness-restriction-of-harmonic}
The proof of this lemma is inspired by Theorem 3.3 in \cite{babuska2011optimal}, which states a similar result but for elliptic equations only. We generalize it here for the Helmholtz equation. 

First, by our geometric construction, $\omega^*-m_e=l_1\cdot (\omega-m_e)$. We denote a sequence of domains $\omega=\omega_0\subset \omega_1 \subset \cdots \subset \omega_{N-1} \subset \omega_N = \omega^*$ such that they are concentric and that $\omega_{j}-m_e=(1+t)(\omega_{j-1}-m_e)$ for $j=1,2,\cdots,N$. Here $t=l_1^{1/N}-1$. Then, there are two important lemmas, whose proofs are presented in Subsections \ref{sec-proof-lemma-L2-by-H} and \ref{sec-proof-lemma-H-by-L2}.
\begin{lemma}
\label{lemma-L2-by-H}
For each $0\leq j \leq N$ and any $n \in \mathbb{N}$, there is an $n$-dimensional space $W_{n}(\omega_j)\subset U(\omega_j)$, such that for all $v \in U(\omega_j)$, it holds that
  \begin{equation}
  \label{eqn-L2-by-H}
\inf _{w \in W_{n}\left(\omega_j\right)}\|v-w\|_{L^{2}\left(\omega_j\right)} \leq CHn^{-1/d}\|v\|_{\mathcal{H}\left(\omega_j\right)}\, ,
  \end{equation}
  where $C$ is a generic constant independent of $k,H,t$ and $n$.
\end{lemma}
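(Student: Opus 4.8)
The plan is to take $W_n(\omega_j)$ to be the span of the leading $n$ singular functions of the compact inclusion $T_j:(U(\omega_j),\|\cdot\|_{\cH(\omega_j)})\hookrightarrow L^2(\omega_j)$; this makes $W_n(\omega_j)$ automatically a subspace of $U(\omega_j)$ and reduces \eqref{eqn-L2-by-H} to an estimate of the $(n{+}1)$-st singular value of $T_j$, equivalently of the Kolmogorov $n$-width of the $\cH(\omega_j)$-unit ball of $U(\omega_j)$ inside $L^2(\omega_j)$. That width will in turn be controlled by the width of the larger $H^1(\omega_j)$-semiball, which is of size $O(Hn^{-1/d})$ because $\omega_j$ is a box whose diameter is comparable to $H$.

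First I would set up the functional framework. Under Assumption \ref{small mesh1} the quantity $\|\cdot\|_{\cH(\omega_j)}$ defines a norm on $H^1(\omega_j)$ equivalent to the standard one, and only the $k$-independent inequality $\|\nabla v\|_{L^2(\omega_j)}\le A_{\min}^{-1/2}\|v\|_{\cH(\omega_j)}$ will actually be used. The space $U(\omega_j)$ --- the solutions of the homogeneous local Helmholtz equation in $\omega_j$ (for an interior edge there is no boundary condition; for an edge touching $\Gamma_N\cup\Gamma_R$ the argument is identical once that condition is included) --- is a closed subspace of $\cH(\omega_j)$, hence a Hilbert space, and the inclusion $T_j$ into $L^2(\omega_j)$ is compact by Rellich's theorem and injective. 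By the SVD of $T_j$ there are a $\cH(\omega_j)$-orthonormal basis $\{e_i\}\subset U(\omega_j)$ (complete since $T_j$ is injective), an orthonormal system $\{f_i\}\subset L^2(\omega_j)$, and singular values $\sigma_1\ge\sigma_2\ge\cdots\downarrow 0$ with $T_jv=\sum_i\sigma_i\langle v,e_i\rangle_{\cH(\omega_j)}f_i$. Setting $W_n(\omega_j):=\operatorname{span}\{e_1,\dots,e_n\}\subset U(\omega_j)$, expanding $v=\sum_i\langle v,e_i\rangle_{\cH(\omega_j)}e_i$ and truncating gives
\[
\inf_{w\in W_n(\omega_j)}\|v-w\|_{L^2(\omega_j)}\le\sigma_{n+1}\,\|v\|_{\cH(\omega_j)}\qquad\text{for all }v\in U(\omega_j),
\]
where $\sigma_{n+1}$ equals the Kolmogorov $n$-width of $T_j(\{v\in U(\omega_j):\|v\|_{\cH(\omega_j)}\le1\})$ in $L^2(\omega_j)$.

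The last step is the width estimate. Since $T_j(\{v:\|v\|_{\cH(\omega_j)}\le1\})\subset\{v\in H^1(\omega_j):\|\nabla v\|_{L^2(\omega_j)}\le A_{\min}^{-1/2}\}$, it suffices to bound $d_n$ of this $H^1$-semiball. By the geometric construction $\omega_j$ is a box concentric with $e$ with side-length ratios at most $l_2$ and $\operatorname{diam}(\omega)\le\operatorname{diam}(\omega_j)\le l_1\operatorname{diam}(\omega)$, so $\operatorname{diam}(\omega_j)$ is comparable to $H$ uniformly in $j$ and in the subdivision parameter $t$. Partitioning $\omega_j$ into $\sim n$ congruent sub-boxes of diameter $\sim\operatorname{diam}(\omega_j)n^{-1/d}$ and applying the elementwise Poincar\'e inequality (whose constant depends only on the aspect ratio), the $n$-dimensional space of functions that are constant on each sub-box approximates any $v$ in the semiball to within $C\operatorname{diam}(\omega_j)n^{-1/d}\le CHn^{-1/d}$ in $L^2(\omega_j)$. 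Hence $\sigma_{n+1}\le CHn^{-1/d}$, and combining with the displayed bound yields \eqref{eqn-L2-by-H} with $C$ depending only on $A_{\min}$, $d$ and the mesh parameters $c_0,c_1$ --- in particular independent of $k$, $H$, $t$ and $n$.

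I expect the only genuinely delicate point to be the \emph{uniformity} of the constant: one must verify that every $\omega_j$, $j=0,\dots,N$, is a box whose diameter is comparable to $H$ with aspect ratio bounded by a constant depending only on $c_0,c_1$, so that the sub-box partitions and the Poincar\'e constants can be chosen once and for all. This is precisely what the relation $\omega_j-m_e=(1+t)^j(\omega-m_e)$ together with the bounds on $l_1,l_2$ from the geometric construction supplies. The compactness/SVD bookkeeping and the elementwise Poincar\'e estimate are otherwise routine.
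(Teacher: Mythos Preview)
Your proposal is correct and follows essentially the same strategy as the paper: identify $W_n(\omega_j)$ with the leading singular vectors of the compact inclusion $U(\omega_j)\hookrightarrow L^2(\omega_j)$, and bound the singular values by passing to the larger $H^1$ (or $\cH$) ball and invoking a Weyl-type approximation rate. The only cosmetic difference is how that rate is obtained: the paper appeals to the Neumann--Laplacian eigenfunctions on $\omega_j$ to get $\inf_{w\in S_n}\|v-w\|_{L^2}\le CHn^{-1/d}\|v\|_{H^1}$ and then uses the max--min theorem to pass from $\cH(\omega_j)$ to the closed subspace $U(\omega_j)$, whereas you use piecewise constants on a partition into $\sim n$ sub-boxes and monotonicity of Kolmogorov widths under set inclusion. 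Both arguments yield the same bound with constants depending only on the aspect ratio of $\omega_j$, and your uniformity discussion is exactly the point that needs checking.
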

\begin{lemma}
\label{lemma-H-by-L2}
For each $1\leq j \leq N$ and every $v \in U(\omega_j)$, it holds that
\begin{equation}
    \label{eqn-H-by-L2}
    \|v\|_{\mathcal{H}(\omega_{j-1})} \leq C/( tH)\|v\|_{L^{2}(\omega_j)}\, ,
\end{equation}
where $C$ is a generic constant independent of $k,H$ and $t$.
\end{lemma}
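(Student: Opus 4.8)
The plan is to prove Lemma~\ref{lemma-H-by-L2} as a Caccioppoli-type energy estimate: localize the Helmholtz-harmonic equation for $v$ by a cutoff supported in $\omega_j$ that equals $1$ on $\omega_{j-1}$, and use that the ``collar'' $\omega_j\setminus\omega_{j-1}$ has width comparable to $tH$. The analysis is the Helmholtz analogue of the Caccioppoli inequality in the elliptic case, and the only subtle point is the bookkeeping needed to make the final constant independent of $t$.

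First I would fix a Lipschitz cutoff $\chi$ with $0\le\chi\le1$, $\chi\equiv1$ on $\omega_{j-1}$, $\operatorname{supp}\chi\subset\overline{\omega_j}$, and $\|\nabla\chi\|_{L^\infty}\le C_g/(tH)$, where $C_g$ depends only on $c_0,c_1$ (and $l_1,l_2,l_3$). Here the geometric construction enters: since $\omega_j-m_e=(1+t)(\omega_{j-1}-m_e)$ with both rectangles concentric at $m_e$, each face of $\partial\omega_j$ lies at distance at least $t\,d_{j-1}$ from $\omega_{j-1}$, where $d_{j-1}$ is the shortest half-side of $\omega_{j-1}$; because $\omega_0=\omega\supset l_3\cdot e$ has aspect ratio at most $l_2$ and $|e|\ge c_0H$, one gets $d_{j-1}\ge d_0\ge cH$ with $c$ depending only on the mesh/geometry parameters, so the collar width is $\gtrsim tH$ and such a $\chi$ exists. (For an edge connected to $\Gamma_N\cup\Gamma_R$ one only requires $\chi$ to vanish on $\partial\omega_j\setminus(\Gamma_N\cup\Gamma_R)$; the construction is analogous.)

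Next, since $v\in U(\omega_j)$ solves the Helmholtz-harmonic equation weakly on $\omega_j$ with the natural condition on $\partial\omega_j\cap(\Gamma_N\cup\Gamma_R)$, the function $\chi^2v$ vanishes on $\partial\omega_j\setminus(\Gamma_N\cup\Gamma_R)$ and is thus an admissible test function, so $a_{\omega_j}(v,\chi^2v)=0$, with $a_{\omega_j}$ the local sesquilinear form on $\omega_j$ analogous to $a_T$ in Subsection~\ref{sec-Local decomposition}. Expanding $\nabla(\chi^2v)=2\chi(\nabla\chi)v+\chi^2\nabla v$ and taking real parts — noting the Robin contribution $\Re\,(T_kv,\chi^2v)_{\Gamma_R\cap\partial\omega_j}=\Re\bigl(ik\beta\chi^2|v|^2\bigr)=0$ — gives
\[
\int_{\omega_j}\chi^2\,A|\nabla v|^2 = k^2\int_{\omega_j}V^2\chi^2|v|^2 - 2\Re\int_{\omega_j}\chi\,\bar v\,A\nabla v\cdot\nabla\chi .
\]
I would bound the cross term by $2A_{\max}\int_{\omega_j}\chi|v|\,|\nabla v|\,|\nabla\chi|$ and apply Young's inequality to absorb $\tfrac12\int_{\omega_j}\chi^2A|\nabla v|^2$ on the left, obtaining
\[
\int_{\omega_j}\chi^2\,A|\nabla v|^2 \le 2k^2V_{\max}^2\|v\|_{L^2(\omega_j)}^2 + \frac{4A_{\max}^2}{A_{\min}}\|\nabla\chi\|_{L^\infty}^2\|v\|_{L^2(\omega_j)}^2 .
\]
Since $\chi\equiv1$ on $\omega_{j-1}$, the left side dominates $\int_{\omega_{j-1}}A|\nabla v|^2$, and adding $k^2\int_{\omega_j}V^2|v|^2\ge k^2\int_{\omega_{j-1}}V^2|v|^2$ yields
\[
\|v\|_{\cH(\omega_{j-1})}^2 \le 3k^2V_{\max}^2\|v\|_{L^2(\omega_j)}^2 + \frac{4A_{\max}^2}{A_{\min}}\|\nabla\chi\|_{L^\infty}^2\|v\|_{L^2(\omega_j)}^2 .
\]

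Finally I would close the estimate using Assumption~\ref{small mesh1}, which gives $k^2V_{\max}^2\le A_{\min}/(2(C'_P)^2H^2)$, together with $\|\nabla\chi\|_{L^\infty}^2\le C_g^2/(tH)^2$ and $t=l_1^{1/N}-1\le l_1-1$, so that $1/H^2\le\max\{1,(l_1-1)^2\}/(tH)^2$. Combining, $\|v\|_{\cH(\omega_{j-1})}^2\le C(tH)^{-2}\|v\|_{L^2(\omega_j)}^2$ with $C$ depending only on $A_{\min},A_{\max},V_{\max},C'_P$ and the mesh parameters — in particular independent of $k,H,t$ — and taking square roots gives the assertion. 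The main (though mild) obstacle is exactly this last bookkeeping: the cutoff gradient naturally produces a factor $1/(tH)$, and one must verify that the $k^2$-term, tamed by the mesh-size assumption, is likewise $O(1/(tH)^2)$ rather than merely $O(1/H^2)$, which holds precisely because $t$ is bounded above by the mesh-independent constant $l_1$.
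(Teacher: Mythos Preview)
Your argument is correct and follows essentially the same Caccioppoli-type approach as the paper: introduce a cutoff supported in $\omega_j$ equal to $1$ on $\omega_{j-1}$ with gradient bounded by $C/(tH)$, test the weak form with $\chi^2 v$, absorb the cross term by Young's inequality, and then use the mesh-size assumption $kV_{\max}H=O(1)$ together with $t\le l_1-1$ to dominate the $k^2$-term by $C/(tH)^2$. Your write-up is in fact slightly more explicit than the paper's on two points---the geometric justification for the existence of the cutoff with the stated gradient bound, and the observation that $t$ is bounded above independently of $N$, which is exactly what lets one replace $1/H^2$ by $1/(tH)^2$ in the final line.
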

With the two lemmas, we are ready to prove Lemma \ref{lemma-compactness-restriction-of-harmonic}.
\begin{proof}[Proof of Lemma \ref{lemma-compactness-restriction-of-harmonic}]
Choose $n =\left\lfloor m/N\right\rfloor$. The proof relies on an iteration argument. We start from $j=N$. By \eqref{eqn-L2-by-H} and \eqref{eqn-H-by-L2}, we get an $n$ dimensional space $W_n(\omega_N) \subset U(\omega_N)$ and a function $w_N \in W_n(\omega_N)$ such that 
\[\|v-w_N\|_{\cH(\omega_{N-1})}\leq C/ (tH) \|v-w_N\|_{L^{2}(\omega_{N})} \leq Ct^{-1}n^{-1/d}\|v\|_{\mathcal{H}(\omega_N)}\, , \]
where we have used the fact that the infimum in \eqref{eqn-L2-by-H} is attained since it is a finite dimensional optimization problem. Here by abuse of notation the value of the constant $C$ varies in different places. It is a generic constant independent of $k,H,t$ and $n$.

Now, we iterate the above process. The function $v-w_N \in U(\omega_{N-1})$, so again by \eqref{eqn-L2-by-H} and \eqref{eqn-H-by-L2}, we get an $n$ dimensional space $W_n(\omega_{N-1}) \subset U(\omega_{N-1})$ and a function $w_{N-1} \in W_n(\omega_{N-1})$ such that 
\[\|v-w_N-w_{N-1}\|_{\cH(\omega_{N-2})}\leq  Ct^{-1}n^{-1/d}\|v-w_N\|_{\mathcal{H}(\omega_{N-1})} \leq  (Ct^{-1}n^{-1/d})^2\|v\|_{\mathcal{H}(\omega_N)}\, . \] 
Repeating the above procedure, we get
\[\|v-\sum_{j=1}^N w_j\|_{\cH(\omega)} \leq  (Ct^{-1}n^{-1/d})^N\|v\|_{\mathcal{H}(\omega^*)}\, , \]
where each $w_j \in U(\omega_j) \subset U(\omega_0) = U(\omega)$. Therefore, there exists an $nN \leq m$ dimensional space $\Phi_{m,e} \subset U(\omega)$ such that
\[\inf_{w \in\Phi_{m,e}} \|v-w\|_{\cH(\omega)} \leq (Ct^{-1}n^{-1/d})^N\|v\|_{\mathcal{H}(\omega^*)}\, .  \]
{For a paramter $q$ to be determined later}, choose $N = \left\lfloor m^{\frac{q}{q+1}} \right\rfloor$, then we obtain
\begin{equation}
\label{eqn-bound-on-error}
\begin{aligned}
    (Ct^{-1}n^{-1/d})^N \leq \left(Ct^{-1}(\frac{m}{N})^{-1/d}\right)^N = \exp(N\left(\frac{1}{d}
    \log(\frac{N}{m})+\log C - \log t\right))\, .
\end{aligned}
\end{equation}
Using $N \leq m^{\frac{q}{q+1}}$ and $t =l_1^{1/N}-1=\exp(\frac{1}{N}\log l_1)-1\geq \frac{1}{N}\log l_1 \geq  m^{-\frac{q}{q+1}}\log l_1$, we can bound the right hand side of \eqref{eqn-bound-on-error} as
\begin{equation}
\label{eqn: exp bound}
\begin{aligned}
    (Ct^{-1}n^{-1/d})^N &\leq \exp\left(-m^{\frac{q}{q+1}} \left((\frac{1}{d}-q)\frac{1}{q+1}\log m -\log C + \log\log l_1\right) \right)\\
    &\leq C_q \exp\left(-m^{\frac{q}{q+1}}\right)\, ,
\end{aligned}
\end{equation}
for some constant $C_q$ that depends on $q, d,C,l_1$, if $q < 1/d$. {Here in the last inequality, we used the fact that when $q < 1/d$, there exists an $M_q$ such that if $m \geq M_q$ then  \[(\frac{1}{d}-q)\frac{1}{q+1}\log m -\log C + \log\log l_1 \geq 1 \, ,\]
and thus $(Ct^{-1}n^{-1/d})^N \leq \exp\left(-m^{\frac{q}{q+1}}\right)$ for $m\geq M_q$. By choosing 
\[C_q = \max_{1\leq m < M_q}  \exp\left(-m^{\frac{q}{q+1}} \left((\frac{1}{d}-q)\frac{1}{q+1}\log m -\log C + \log\log l_1\right) \right) \exp(m^{\frac{q}{q+1}}) + 1 \]
we can prove that  \eqref{eqn: exp bound} is valid.}

{Now, we choose $q < 1/d$ and denote $\frac{q}{q+1} = \frac{1}{d+1}-\epsilon$ for some $\epsilon>0$. There is a one-to-one correspondence between $q$ and  small positive $\epsilon$, so we can also write the error estimate in \eqref{eqn: exp bound} in terms of $\epsilon$ as
\[ (Ct^{-1}n^{-1/d})^N \leq C_{\epsilon}\exp\left(-m^{\frac{1}{d+1}-\epsilon}\right)\, . \]
This completes the proof.}
\end{proof}
\subsubsection{Proof of Lemma \ref{lemma-L2-by-H}}
\label{sec-proof-lemma-L2-by-H}
First, using the spectrum of the Laplacian operator with Neumann's boundary condition, there exists an $n$ dimensional space $S_n \subset H^1(\omega_j)$ such that for any $v \in H^1(\omega_j)$, 
\begin{equation}
\label{eqn-laplacian-eigen}
\inf _{w \in S_{n}}\|v-w\|_{L^{2}\left(\omega_j\right)} \leq CHn^{-1/d}\|v\|_{H^1\left(\omega_j\right)} \leq CHn^{-1/d}\|v\|_{\mathcal{H}\left(\omega_j\right)}\, ,
  \end{equation}
  where $C$ is a generic constant independent of $k,H,t$ and $n$. Equivalently, this implies the identity embedding operator $Q: (\cH(\omega_j), \|\cdot\|_{\cH(\omega_j)}) \to (L^2(\omega_j),\|\cdot\|_{L^2(\omega_j)})$ such that $Qv=v$ is compact and the its $n$-th largest left singular value  $\mu_n \leq CHn^{-1/d}$.
  
  Now, since $U(\omega_j)$ is a closed subspace of $(\cH(\omega_j), \|\cdot\|_{\cH(\omega_j)})$, we can view $Q$ as an operator from $(U(\omega_j), \|\cdot\|_{\cH(\omega_j)})$ to $(L^2(\omega_j),\|\cdot\|_{L^2(\omega_j)})$. Denote its singular values in a non-increasing order by $\{\mu_n'\}$. Using the max-min theorem for singular values, we obtain
  \begin{equation*}
  \begin{aligned}
   \mu_n'&=\max_{S_n \subset U(\omega_j), \mathrm{dim}(S_n)=n} ~\min_{v\in S_n, \|v\|_{\cH(\omega_j)}=1} \|Qv\|_{L^2(\omega_j)}\\
   &\leq \max_{S_n \subset \cH(\omega_j), \mathrm{dim}(S_n)=n} ~\min_{v\in S_n, \|v\|_{\cH(\omega_j)}=1} \|Qv\|_{L^2(\omega_j)}= \mu_n\, .
  \end{aligned}
  \end{equation*}
  Thus, $\mu_n' \leq CHn^{-1/d}$. Therefore, there is an $n$-dimensional space $W_{n}(\omega_j)\subset U(\omega_j)$, such that for all $v \in U(\omega_j)$, it holds that
  \begin{equation*}
\inf _{w \in S_{n}}\|v-w\|_{L^{2}\left(\omega_j\right)} \leq CHn^{-1/d}\|v\|_{H^1\left(\omega_j\right)} \leq CHn^{-1/d}\|v\|_{\mathcal{H}\left(\omega_j\right)}\, .
  \end{equation*}
  The proof is completed.
\subsubsection{Proof of Lemma \ref{lemma-H-by-L2}}
\label{sec-proof-lemma-H-by-L2}
We introduce a cutoff function 
$\eta \in C^{1}(\omega_j)$ such that 
$0\leq \eta\leq 1$, and $\eta=1$ in $\omega_{j-1}$, as well as
$|\nabla \eta(x)| \leq C/(tH)$ for some constant $C$ independent of $k,H$ and $t$.

For any $v \in U(\omega_j)$, we use the test function $\eta^{2} v$ and the weak form to get
\begin{equation}
\label{eqn-Helmholtz-harmonic-weak-form}
(A \nabla v, \nabla (\eta^{2} v))_{\omega_j} -k^2(V v, V \eta^{2}v)_{\omega_j}=0\, ,
\end{equation}
where we have used the definition of $U(\omega_j)$ (see the beginning of Subsection \ref{sec-main-idea-of-proof}), and the property of our construction that  $\partial \omega_j \cap (\Gamma_N \cup \Gamma_R) = \emptyset$.

Using the relation $\|A^{1 / 2}\eta  \nabla v\|_{L^2(\omega_j)}^2 = (A \nabla v, \eta^{2}\nabla  v)_{\omega_j}$ and the above formula, we obtain
\begin{equation}
\begin{aligned}
\|A^{1 / 2}\eta  \nabla v\|_{L^2(\omega_j)}^2 &=- 2(A^{1 / 2}\eta  \nabla v, A^{1 / 2}v\nabla \eta )_{\omega_j} +k^2(V v,V\eta^{2} v)_{\omega_j} \, ,\\
& \leq \frac{1}{2}\|A^{1 / 2}\eta  \nabla v\|_{L^2(\omega_j)}^2 + 2\|A^{1/2}v\nabla \eta\|_{L^2(\omega_j)}^2 + k^2V_{\max}^2\|v\|_{L^2(\omega_j)}^2\, ,
\end{aligned}
\end{equation}
which leads to $\|A^{1 / 2}\eta  \nabla v\|_{L^2(\omega_j)}^2 \leq 4\|A^{1/2}v\nabla \eta\|_{L^2(\omega_j)}^2 + 2k^2V_{\max}^2\|v\|_{L^2(\omega_j)}^2$. Therefore, using the fact that $\eta = 1$ in $\omega_{j-1}$, we have
\begin{equation}
    \begin{aligned}
    \|v\|_{\cH(\omega_{j-1})}^2 &\leq \|A^{1 / 2}\eta  \nabla v\|_{L^2(\omega_j)}^2 + k^2V_{\max}^2\|v\|_{L^2(\omega_j)}^2\\
    &\leq 4\|A^{1/2}v\nabla \eta\|_{L^2(\omega_j)}^2 + 3k^2V_{\max}^2\|v\|_{L^2(\omega_j)}^2\\
    & \leq \left(\frac{4C^2}{(tH)^2}+3k^2V_{\max}^2\right)\|v\|_{L^2(\omega_j)}^2\\
    & \leq \frac{C'^2}{(tH)^2}\|v\|_{L^2(\omega_j)}^2\, ,
    \end{aligned}
\end{equation}
for some $C'$ independent of $k,H$ and $t$, where we have used Assumption \ref{small mesh} such that $kV_{\max}H\leq C''$ for $C''= A_{\min}^{1/2}/(\sqrt{2}C_P)$. This completes the proof.
    
\subsubsection{Proof of Lemma \ref{lemma: bound H 1/2 by energy norm}}
\label{subsec: Proof of Lemma bound H 1/2 by energy norm}
We use Lemma 3.9 of \cite{chen2020exponential}, which implies that
\begin{equation}
\left\|R_e v\right\|_{\cH^{1/2}(e)}\leq C 
\left(\|A^{1/2}\nabla v\|_{L^2(\omega)} + H\|\nabla \cdot (A\nabla v)\|_{L^2(\omega)}\right) \, ,
\end{equation}
for some $C$ independent of $k, H$.
By a triangular inequality, we have
\begin{equation*}
    \begin{aligned}
    H\|\nabla \cdot (A\nabla v)\|_{L^2(\omega)} &\leq H\|k^2V^2v\|_{L^2(\omega)}+H\|\nabla \cdot (A\nabla v)+k^2V^2v\|_{L^2(\omega)} \\
    &\leq C'\|kVv\|_{L^2(\omega)}+H\|\nabla \cdot (A\nabla v)+k^2V^2v\|_{L^2(\omega)}\, ,
    \end{aligned}
\end{equation*}
where we have used Assumption \ref{small mesh} such that $kV_{\max}H\leq C'$ for $C'= A_{\min}^{1/2}/(\sqrt{2}C_P)$. Now, using the definition of the $\cH(\omega)$ norm, we have
\[\|A^{1/2}\nabla v\|_{L^2(\omega)} + C'\|kVv\|_{L^2(\omega)} \leq C''\|v\|_{\cH(\omega)}\, ,  \]
for some generic constant $C''$ that does not depend on anything else. Combining the above inequalities concludes the proof.
\subsubsection{For Edges Connected to the Boundary}
The above proofs are for interior edges. For edges connected to the boundary, we need a different geometric relation, as depicted in the right of Figure \ref{fig:omega omega star}. The quantitative characterization of this geometric relation is the same as that in Subsection 3.3.2 of \cite{chen2020exponential}, which introduces three other parameters $l_4,l_5,l_6$ to describe the geometry associated with edges, similar to $l_1,l_2,l_3$ for interior edges.

The main idea of the proof for this case is the same as that for the interior edges. We need to prove Lemmas \ref{lemma-compactness-restriction-of-harmonic} and \ref{lemma: bound H 1/2 by energy norm} for edges connected to the boundary. The proof of Lemma  \ref{lemma: bound H 1/2 by energy norm} remains the same since its statement holds for all edges. To prove Lemma \ref{lemma-compactness-restriction-of-harmonic}, we again use the same strategy in Subsection \ref{sec-proof-lemma-compactness-restriction-of-harmonic}, by establishing Lemmas \ref{lemma-L2-by-H} and \ref{lemma-H-by-L2} and then using an iteration argument. The iteration argument and the proof for Lemma \ref{lemma-L2-by-H} remain unchanged. For Lemma \ref{lemma-H-by-L2}, the only slight change is \eqref{eqn-Helmholtz-harmonic-weak-form}, which becomes
\begin{equation}
(A \nabla v, \nabla (\eta^{2} v))_{\omega_j} -k^2(V v, V \eta^{2}v)_{\omega_j}=(T_k v, \eta^2 v)_{\partial \omega_j \cap (\Gamma_N \cup \Gamma_R)}\, ,
\end{equation}
due to the boundary conditions involved. However, since $\Re (T_k v, \eta^2 v)_{\partial \omega_j \cap (\Gamma_N \cup \Gamma_R)} \leq 0$, the conclusion of Lemma \ref{lemma-H-by-L2} still holds. 

Therefore, the result also holds for edges connected to the boundary.

\subsection{Proof of Proposition \ref{prop: small os bubble}}
\label{subsec: Proof of Proposition prop: small os bubble}
First we have the bound on the oversampling bubble part
in \eqref{eqn-small-os-bubble}:
\begin{equation}
         \|u^{\sfb}_{\omega_e}\|_{\mathcal{H}(\omega_e)} \leq  \frac{3C'_P}{A_{\min}^{1/2}}H\|f\|_{L^{2}(\omega_e)}\, .
     \end{equation}
Applying Lemma \ref{lemma: bound H 1/2 by energy norm} and the definition of $u^{\sfb}_{\omega_e}$ leads to
\begin{equation}
\begin{aligned}
\|R_eu^{\sfb}_{\omega_e}\|_{\cH^{1/2}(e)}&\leq C\left(\|u^{\sfb}_{\omega_e}\|_{\mathcal{H}(\omega)}+ H\|\nabla \cdot (A\nabla u^{\sfb}_{\omega_e})+k^2V^2u^{\sfb}_{\omega_e}\|_{L^2(\omega)}\right)\\
&\leq C'H\|f\|_{L^2(\omega_e)}\, ,
\end{aligned}
\end{equation}
where $C'$ is a constant independent of $k$ and $H$.

\subsection{Proof of Theorem \ref{eqn:approximation property}}
\label{Gard}
\begin{proof}
\yw{
Define $e_S=u^\sfh-u^\sfs-u_S \in V^\sfh$. Take $\psi=N_{k}^{\star}(e_S)$. It holds that \[\|e_S\|_{L^{2}(\Omega)}^{2} =a(e_S, \psi) = a(e_S, \psi-v)\, , \]
for any $v \in S$, due to the property of the Galerkin solution. Thus, using the boundedness of $a(\cdot,\cdot)$, we obtain that 
\begin{equation}
\label{eqn-error-estimate-L2-err}
    \|e_S\|_{L^{2}(\Omega)}^{2}\leq C_c\|e_S\|_{\mathcal{H}(\Omega)}\|\psi-v\|_{\mathcal{H}(\Omega)}=C_c\|e_S\|_{\mathcal{H}(\Omega)}\|\overline{\psi}-\overline{v}\|_{\mathcal{H}(\Omega)}\, .
\end{equation}
As $\overline{\psi} = N_k\overline{e_S}$ according to the definition of the adjoint problem in Subsection \ref{sec-analytic-results}, we can take infimum of $v$ over $S$, using the fact that $S = \overline{S}$, the definition \eqref{apro proxy}, the inequality \eqref{eqn-error-estimate-L2-err}, to get 
\[ \|e_S\|_{L^{2}(\Omega)}^{2} \leq C_c\|e_S\|_{\cH(\Omega)} \cdot \eta(S)\|\overline{e_S}\|_{L^2(\Omega)}\, ,\]
which leads to the desired $L^2(\Omega)$ error estimate: $\|e_S\|_{L^{2}(\Omega)} \leq C_c\eta(S)\|e_S\|_{\cH(\Omega)}$.

For the $\cH(\Omega)$ error, the property of Galerkin's solution implies that for any $v \in S$, we have
\begin{equation}
\label{4.5}
\begin{aligned}\|e_S\|_{\mathcal{H}(\Omega)}^{2} &
=\Re a(e_S, e_S)+\{\|e_S\|_{\mathcal{H}(\Omega)}^{2}-\Re a(e_S, e_S)\} \\ 
&=\Re a(e_S, u^\sfh-u^\sfs-v)+2 \|kV(x)e_S\|_{L^{2}(\Omega)}^{2}+\Re (T_{k} e_S, e_S)_{{\Gamma_N\cup\Gamma_R}}\\
&\leq C_{c}\|e_S\|_{\mathcal{H}(\Omega)}\|u^\sfh-u^\sfs-v\|_{\mathcal{H}(\Omega)}+2 (kV_{\max} C_{c} \eta(S))^2\|e_S\|_{\mathcal{H}(\Omega)}^{2} \, ,
\end{aligned}
\end{equation}
where we have used the fact that $\Re (T_{k} e_S, e_S)_{{\Gamma_N\cup\Gamma_R}} \leq 0$ and  the $L^2(\Omega)$ error estimate that we established earlier.

By the assumption $k \eta^\sfh(S)  \leq 1/{(2C_cV_{\max})}$, the last term in \eqref{4.5} is bounded by  $\frac{1}{2}\|e_S\|_{\mathcal{H}(\Omega)}^2$. Thus due to the  arbitrariness of $v$, we arrive at 
\[\left\|e_S\right\|_{\mathcal{H}(\Omega)} \leq 2 C_{c} \inf _{v \in S}\|u^\sfh-v\|_{\mathcal{H}(\Omega)}\, . \]
This completes the proof for the first part.
Next, we move to the proof for the discrete inf-sup stability.
 For any $v\in S$, set $z=2N_{k}^{\star} (k^2V^2v) \in \cH(\Omega)$ so that $a(v,z)=2k^{2}(V^2 v,  {v})_{\Omega} $. Plugging $v$ and $v+z$ into the sesquilinear form yields:
\begin{equation*}
\begin{aligned} 
a(v, v+z) & = a(v,v) + a(v,z)\\
& = (A\nabla v, \nabla {v})_{\Omega}-k^{2}(V^2 v,  {v})_{\Omega}  -( T_{k} v,  {v})_{{\Gamma_N\cup\Gamma_R}}+2k^{2}(V^2 v,  {v})_{\Omega}\\
 &=\|v\|_{\mathcal{H}(\Omega)}^{2}  -( T_{k} v,  {v})_{{\Gamma_N\cup\Gamma_R}}\, . \end{aligned}
\end{equation*}
By the definition of $T_k$, $\Re( T_{k} v,  {v})_{{\Gamma_N\cup\Gamma_R}} \leq 0$, so it holds that
 \begin{equation*}
 \Re a(v, v+z)\geq 
\|v\|_{\mathcal{H}(\Omega)}^{2}\, .
\end{equation*}
Now, by the definition of the adjoint problem, we have $\overline{z}=2N_k(k^2V^2\overline{v})$. %We can decompose $\overline{z}=\overline{z}^\sfh + \overline{z}^\sfb$.
Let $z_S \in S$ achieve the best approximation in $\eqref{apro proxy}$ for $f = 2k^2V^2\overline{v}$, so that
\begin{equation}
\label{eqn-best-approximation-zh}
    \|\overline{z}^\sfh -\overline{z}^\sfs- z_S\|_{\cH(\Omega)} \leq \eta(S)\|2k^2V^2\overline{v}\|_{L^2(\Omega)}\leq 2kV_{\max} \eta(S)\|v\|_{\cH(\Omega)}\, .
\end{equation}

We can choose $v'=v+\overline{z_S} \in S$ to compute
\begin{equation*}
\Re a(v, v+\overline{z_S})=\Re a(v, v+z)-\Re a(v, z-\overline{z_S})\geq \|v\|_{\mathcal{H}(\Omega)}^{2}-
C_{c}\|v\|_{\mathcal{H}(\Omega)}\|\overline{z}-z_{S}\|_{\mathcal{H}(\Omega)}\, .
\end{equation*}
We use the bound in \eqref{eqn-best-approximation-zh} and the triangle inequality to get
\begin{equation*}
    |a(v, v+\overline{z_S})|\geq \|v\|_{\mathcal{H}(\Omega)}^{2}(1-2 C_{c}kV_{\max}\eta(S)  )-C_c\|v\|_{\mathcal{H}(\Omega)}(\|z^\sfs\|_{\mathcal{H}(\Omega)}+\|z^\sfb\|_{\mathcal{H}(\Omega)})\, .
\end{equation*}
Meanwhile, by a triangle inequality, we get
\begin{equation*}
\begin{aligned}
\|v+\overline{z_S}\|_{\mathcal{H}(\Omega)}  \leq\|v\|_{\mathcal{H}(\Omega)}+\|z^\sfh-z^\sfs-\overline{z_S}\|_{\mathcal{H}(\Omega)}+\|z\|_{\mathcal{H}(\Omega)}+\|z^\sfs\|_{\mathcal{H}(\Omega)}+\|z^\sfb\|_{\mathcal{H}(\Omega)}\,.
\end{aligned}
\end{equation*}

Finally we are left to estimate the energy norm of $z$ and its fine scale parts. By the stability estimate in \eqref{eqn: stability}, we have
\[\|z\|_{\mathcal{H}(\Omega)}\leq C_{\mathrm{stab}}(k)\|2k^2V^2v\|_{L^2(\Omega)}\leq 2C_{\mathrm{stab}}(k) k V_{\max}\|v\|_{\mathcal{H}(\Omega)}\, ,\]
and by the bound on the fine part as given by \eqref{bbh}, it holds that
\[\|z^\sfs\|_{\mathcal{H}(\Omega)}+\|z^\sfb\|_{\mathcal{H}(\Omega)} \leq  C_s H\|2k^2V^2\overline{v}\|_{L^{2}(\Omega)}\leq2C_s H k V_{\max} \|v\|_{\mathcal{H}(\Omega)}\, .\]
Therefore, we obtain
\begin{align*}
    \sup _{v' \in S \backslash\{0\}}& \frac{|a(v, v')|}{\|v\|_{\mathcal{H}(\Omega)}\|v'\|_{\mathcal{H}(\Omega)}} \geq \frac{|a(v, v+\overline{z_S})|}{\|v\|_{\mathcal{H}(\Omega)}\|v+\overline{z_S}\|_{\mathcal{H}(\Omega)}} \\
    & \geq  \frac{(1-2 \eta(S)C_{c}kV_{\max} -2C_cC_sHkV_{\max} )\|v\|_{\mathcal{H}(\Omega)}^{2}}{(1+2 \eta(S)kV_{\max} +2 C_{\mathrm{stab}}(k)kV_{\max}+2C_sHkV_{\max} )\|v\|^2_{\mathcal{H}(\Omega)}}\, .
\end{align*}
Using the assumptions that $\eta(S)kV_{\max}   \leq 1/(4C_c)$ and $C_sHkV_{\max}\leq 1/(8C_c)$, we obtain the desired estimate.}
\end{proof}
\section{Generalization to 3D problems}
\label{sec 3d}
\yw{We discuss how to generalize the 2D edge basis framework to 3D problems.
Now, in the mesh structure, the cubes will be the elements $\cT_H=\{T_1,T_2,...,T_r\}$. As in the 2D case, we have the collection of nodes $\cN_H=\{x_1,x_2,...,x_p\}$ and the collection of edges $\cE_H=\{e_1,e_2,...,e_q\}$. We also have the collection of faces $\cF_H=\{f_1,f_2,...,f_s\}$ and the face set is defined via $F_H:=\bigcup_{f \in \cF_H} f$. 

As before we perform the local and global harmonic-bubble decomposition $u=u^\sfh+u^\sfb$ for a mesh size $H=O(1/k)$. We want to approximate the Helmholtz-harmonic part $u^\sfh$, which is uniquely determined by its traces on the face set $F_H$. To achieve so, we will construct local face basis functions. 
%We use $\tilde{\xi}$ to denote the face function associated to a harmonic function $\xi$.

In order to isolate the approximation task to each face (similar to 2D, where we isolate the task to each edge via nodal interpolation), we first introduce nodal and edge basis functions. Similar to 2D, these basis functions are constructed via linear interpolations from nodal or edge values to facial values. We detail the construction below. 
\begin{enumerate}
    \item \textbf{Nodal basis:} We define a nodal basis associated with each $x_i$ that satisfies $\phi_i(x_j)=\delta_{ij}$, with its value on the faces as linear interpolations between the values of its boundary four nodes. The Helmholtz-harmonic extension of the facial values leads to the basis functions.
    \item \textbf{Edge function and edge basis:} For each edge $e_i$, we consider edge functions $\phi^{(1)}_i$ supported on $e_i$ and vanishing on the endpoint nodes: namely $\phi^{(1)}_i(x)=0$ for any $x\in e_j$ when $j\neq i$. We extend its value to the faces via linear interpolation between the values on the edges. Then, its value in the domain is identified via Helmholtz-harmonic extension. This defines an edge function.
    
     We know that the restriction of any edge function on the faces is supported in the $4$ faces containing $e_i$ as an edge. The associated Helmholtz-harmonic function has its support in the $4$ cubes containing $e_i$ as an edge. Notice that we could have an infinite number of edge functions on every edge. We will construct a finite number of edge basis later for approximation. This is different from the 2D case and the nodal basis functions.
\end{enumerate} 
With nodal basis and edge functions defined, we can define similarly to Section \ref{sec-Localization of Approximation} the interpolation of face functions. Recall that similar to the 2D case, face functions are functions defined on the face set and whose Helmholtz-harmonic extension leads to functions defined in the domain.
We pick up the notation there and denote the nodal interpolation of a face function $v$ as $I_H v$. Now, $v-I_H v$ restricted to the edge set is localized onto each edge, and we can use edge functions to account for the edge values. We denote the edge interpolation as $J_H v$ such that $v-I_H v-J_H v$ vanishes on each edge. Here, $J_Hv$ equals $v-I_Hv$ on each edge and is linear in each face. 

The operators $I_H$, $J_H$ are linear, with the image of $I_H, J_H$ spanned by the nodal and edge functions, respectively. We only need to find edge basis functions to approximate $J_H v$ and face basis functions to approximate $F_Hv:=v-I_H v-J_H v$. 

Notice that the interpolation operators $J_H$ and $F_H$ are locally well defined for each edge $e$
and face $f$ respectively: $J_H v$ restricted on $e$ is only dependent on the values of $v$ on $\overline{e}$ and $F_H v$ restricted on $f$ is only dependent on the values of $v$ on $\overline{f}$. We use the notation $J_e$ and $F_f$ to denote their restrictions on $e$ and $f$ so that $J_H = \sum_{e} J_e$ and $F_H = \sum_{f} F_f$.

We use oversampling to achieve exponential approximation accuracy for both edges and faces. To be specific, we consider an oversampling element $\omega_e$ associated with each edge $e$ such that $e$ is contained in the interior; for example, similar to \eqref{eqn: os domain 1 layer} we take\[
        \omega_e=\overline{\bigcup \{T\in \cT_H: \overline{T} \cap e \neq \emptyset\}}\, .
\]  Similarly, we can take an oversampling element $\omega_f$ associated with a face $f$ to be \[
        \omega_f=\overline{\bigcup \{T\in \cT_H: \overline{T} \cap f \neq \emptyset\}}\, .
\] Now, we perform harmonic-splitting on each oversampling domain and define the two special harmonic functions $u_{(1)}^\sfs$ and $u_{(2)}^\sfs$ to account for the effect of oversampling bubble part for edges and faces, similar as the 2D framework. The function $u_{(1)}^\sfs$ lies in the span of edge functions, with its restriction on each edge $e$ equal to $J_e u_{\omega_e}^\sfb$. The restriction of $u_{(2)}^\sfs$ on each face $f$ equals $F_f u_{\omega_f}^\sfb$. Both special functions are tractable by solving local problems. Then, we have the localization of the approximation \[\begin{aligned}
u-u^\sfb-u_{(1)}^\sfs-u_{(2)}^\sfs&=I_H u^\sfh+J_H u^\sfh-u_{(1)}^\sfs+F_H u^\sfh-u_{(2)}^\sfs\\
&=I_H u + \sum_{e}J_e u_{\omega_e}^\sfh + \sum_{f}F_f u_{\omega_f}^\sfh\, .
\end{aligned}\]

We only need to show that for each edge $e$ and each face $f$, the parts $J_e u_{\omega_e}^\sfh$ and $F_f u_{\omega_f}^\sfh$ can be approximated with exponential efficiency. Similar to the 2D case, by exponential decaying eigenvalues of concentric domains in Lemma \ref{lemma-compactness-restriction-of-harmonic},  we only need to show the boundedness of the restrictions $J_e$ and $F_f$, similar to Lemma \ref{lemma: bound H 1/2 by energy norm}; see the two propositions below. 
\begin{proposition}

\label{pro-compactness-restriction-of-edge}
  For $d=3$ and an oversampling domain $\omega$ such that $l_1 e\in\omega$ for a uniform constant $l_1>1$. If $v \in H^1(\omega)$ and $\nabla \cdot (A\nabla v) \in L^2(\omega)$, it holds that
\begin{equation}
\left\|J_e v\right\|_{\cH^{1/2}(e)}\leq C 
\left(\|v\|_{\cH(\omega)} + H\|\nabla \cdot (A\nabla v)+k^2V^2v\|_{L^2(\omega)}\right) \, ,
\end{equation}
for some $C$ independent of $k$ and $H$.
\end{proposition}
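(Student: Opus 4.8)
The plan is to follow the two-stage structure of the proof of Lemma~\ref{lemma: bound H 1/2 by energy norm}. First I reduce the statement to its purely ``elliptic'' counterpart: it suffices to prove
\[
\left\|J_e v\right\|_{\cH^{1/2}(e)}\leq C\bigl(\|A^{1/2}\nabla v\|_{L^2(\omega)} + H\|\nabla \cdot (A\nabla v)\|_{L^2(\omega)}\bigr)
\]
with $C$ independent of $k$ and $H$. Granting this, a triangle inequality gives $H\|\nabla\cdot(A\nabla v)\|_{L^2(\omega)}\leq H\|k^2V^2v\|_{L^2(\omega)}+H\|\nabla\cdot(A\nabla v)+k^2V^2v\|_{L^2(\omega)}$; the mesh constraint $H\leq A_{\min}^{1/2}/(\sqrt{2}C_P V_{\max}k)$ (Assumption~\ref{small mesh}) yields $H\|k^2V^2v\|_{L^2(\omega)}\leq C'\|kVv\|_{L^2(\omega)}$, and combined with $\|A^{1/2}\nabla v\|_{L^2(\omega)}+C'\|kVv\|_{L^2(\omega)}\leq C''\|v\|_{\cH(\omega)}$ this produces the asserted bound. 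This step is verbatim the same as in the two-dimensional argument.

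The substance is therefore the elliptic estimate, which is the three-dimensional analogue of Lemma~3.9 of~\cite{chen2020exponential}. Recall $J_e v=(v-I_Hv)|_e$, and $\left\|J_e v\right\|_{\cH^{1/2}(e)}$ is the energy of the function obtained from this one-dimensional datum by the extension used to build the edge basis (linear interpolation onto the four faces meeting $e$, then Helmholtz-harmonic extension into the four adjacent cubes); since the harmonic extension is energy-minimizing it is enough to estimate the energy of one explicit admissible extension. The genuinely new point relative to the two-dimensional case is that $e$ now has codimension two, so $v|_e$ is not controlled by the ordinary trace theorem. I would supply the missing regularity from the equation: since $l_1 e\subset\omega$ there is room to fix a subdomain $\omega'$ with $e$ in the interior of $\omega'$ and $\overline{\omega'}\subset\omega$, and because $\nabla\cdot(A\nabla v)\in L^2(\omega)$ with $A$ merely bounded and measurable we obtain $v\in C^{\alpha}(\omega')$ for some $\alpha\in(0,1)$ (and, via Meyers higher-integrability, $\nabla v\in L^{2+\delta}_{\mathrm{loc}}$ for some $\delta>0$), with a scaled bound of the form $\|v\|_{C^{\alpha}(\omega')}\leq C\bigl(H^{-\alpha-d/2}\|v\|_{L^2(\omega)}+H^{-\alpha+2-d/2}\|\nabla\cdot(A\nabla v)\|_{L^2(\omega)}\bigr)$ coming from rescaling to unit size, after which a Poincar\'e inequality converts $\|v\|_{L^2(\omega)}$ into $H\|\nabla v\|_{L^2(\omega)}$ (using that $v-I_Hv$ vanishes at the nodes, or subtracting a constant). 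This makes $(v-I_Hv)|_e$ a well-defined H\"older function on $e$ vanishing at $\partial e$, which already controls the nodal-interpolation part $I_Hv|_e$ through pointwise nodal values and the weighted Lions--Magenes part of the norm near the endpoints of $e$.

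The main obstacle is the remaining ``seminorm'' part of $\left\|J_e v\right\|_{\cH^{1/2}(e)}$, which is morally a weighted fractional derivative of $v$ along the one-dimensional edge $e$ and must be bounded by the bulk energy $\|A^{1/2}\nabla v\|_{L^2(\omega)}$. No codimension-two trace inequality of the form ``seminorm on $e$ $\lesssim$ $H^1$ norm on $\omega$'' holds for general $H^1$ functions, so the argument must exploit that $v$ solves (up to the lower-order term) a divergence-form elliptic equation; concretely I would descend in two steps --- a codimension-one trace from $\omega$ onto a face $f\supset e$, then a further trace from $f$ onto $e$ --- inserting the Caccioppoli/Meyers gain at the step where a power of $H$ (equivalently, a fraction of a derivative) is missing, and tracking the powers of $H$ so that the final constant is independent of $k$ and $H$. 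I expect that this chained trace argument, together with its scale bookkeeping, is where essentially all the work lies; once it is in place the remainder is a transcription of Lemma~\ref{lemma: bound H 1/2 by energy norm} and of the elliptic estimate of~\cite{chen2020exponential}. Combining Proposition~\ref{pro-compactness-restriction-of-edge} with Lemma~\ref{lemma-compactness-restriction-of-harmonic} applied to the concentric domains around $e$ then yields the exponential approximability of $J_e u_{\omega_e}^{\sfh}$, exactly as Theorem~\ref{thm: svd exponential decay of Re} was deduced in the two-dimensional setting.
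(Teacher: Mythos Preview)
Your two-stage structure---reduce to the purely elliptic estimate, then prove that estimate via local interior regularity of divergence-form equations---is exactly what the paper does, and the reduction step you wrote out is verbatim the same as in Lemma~\ref{lemma: bound H 1/2 by energy norm}. You have also correctly identified the one genuinely new point in three dimensions: the edge is codimension two, so the ordinary $H^1$ trace theorem does not furnish $v|_e$, and one must call on the equation (through De~Giorgi--Nash--Moser $C^\alpha$ regularity) to even make sense of $J_e v$.

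Where you diverge from the paper is in how much machinery you invoke for the seminorm part of $\|J_e v\|_{\cH^{1/2}(e)}$. The paper does not actually give a detailed proof here; it only asserts, in the paragraph following the two propositions, that the $\cH^{1/2}(e)$ norm is ``bounded by the local $C^\alpha$ norm'' and that the argument is analogous to Lemma~\ref{lemma: bound H 1/2 by energy norm}. In other words, the paper claims the local $C^\alpha$ estimate for $v$ (with the correct scaling in $H$) controls the \emph{entire} $\cH^{1/2}(e)$ norm of $J_e v$ directly, not just the pointwise and Lions--Magenes weighted parts you isolated. If that claim is taken at face value, your chained codimension-one trace plus Meyers higher integrability is more than needed: one would simply bound the $H^{1/2}_{00}$ norm on the faces of the linearly interpolated datum by the $C^\alpha$ seminorm of $J_e v$ on $e$, and then bound the latter by $\|A^{1/2}\nabla v\|_{L^2(\omega)}+H\|\nabla\cdot(A\nabla v)\|_{L^2(\omega)}$ via the scaled interior $C^\alpha$ estimate. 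Your more cautious route is not wrong, and it makes explicit a step the paper glosses over; but the paper's intended argument is shorter and does not invoke Meyers.
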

\begin{proposition}
\label{pro-compactness-restriction-of-face}
 For $d=3$ and an oversampling domain $\omega$ such that $l_1 f\in\omega$ for a uniform constant $l_1>1$. If $v \in H^1(\omega)$ and $\nabla \cdot (A\nabla v) \in L^2(\omega)$, it holds that
\begin{equation}
\left\|F_f v\right\|_{\cH^{1/2}(f)}\leq C 
\left(\|v\|_{\cH(\omega)} + H\|\nabla \cdot (A\nabla v)+k^2V^2v\|_{L^2(\omega)}\right) \, ,
\end{equation}
for some $C$ independent of $k$ and $H$.

\end{proposition}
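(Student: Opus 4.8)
\textbf{Proof proposal for Proposition \ref{pro-compactness-restriction-of-face}.}
The plan is to adapt the two-step scheme behind Lemma \ref{lemma: bound H 1/2 by energy norm}. Write $\tilde\psi := F_f v|_f$, and note that $F_f v$ vanishes on $\partial f$ by construction. Throughout I use the $d=3$ analogues of Proposition \ref{prop: poincare inequality}, Assumption \ref{small mesh} and Proposition \ref{prop-well-definedness-local-prob} (available whenever $H=O(1/k)$, as already used in Section \ref{sec 3d}), together with the interior $C^{\alpha}$ estimate for the elliptic operator $w\mapsto-\nabla\cdot(A\nabla w)$ with right-hand side in $L^2$ --- valid for $d=3$ and with constants independent of $k$, since the local operator behaves like an elliptic one; this regularity also makes sense of $v|_f\in C^{\alpha}(f)$, so that $\tilde\psi\in H^{1/2}_{00}(f)$ and the $\cH^{1/2}(f)$-norm (the evident 3D analogue of Definition \ref{def: H 1/2 norm}) is well defined. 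Step 1 replaces the $\cH^{1/2}(f)$-norm by an intrinsic, local norm on the face; Step 2 estimates that norm; the Helmholtz term $k^2V^2v$ is brought in at the very end by the triangle-inequality trick of Subsection \ref{subsec: Proof of Lemma bound H 1/2 by energy norm}.

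\emph{Step 1: localizing the $\cH^{1/2}(f)$-norm.} Let $\psi\in V^{\sfh}$ be the Helmholtz-harmonic function whose trace on $F_H$ is the zero extension of $\tilde\psi$. Since $\tilde\psi$ vanishes on all edges of $f$, this trace is supported on $f$, so in any cube not having $f$ as a face the local boundary data is zero and hence $\psi\equiv0$ there by coercivity; thus $\psi$ is supported in the (at most two) cubes $T$ with $f\subset\overline T$ and $\|\tilde\psi\|_{\cH^{1/2}(f)}^2=\sum_{f\subset\overline T}\|\psi\|_{\cH(T)}^2$. In each such $T$ the Poincar\'e inequality together with Assumption \ref{small mesh} gives $\|\psi\|_{\cH(T)}\le C\|A^{1/2}\nabla\psi\|_{L^2(T)}$ (as $\psi$ vanishes on a face of $T$), while a standard trace--extension argument on the cube bounds $\|A^{1/2}\nabla\psi\|_{L^2(T)}$ by the scale-$H$, $A$-weighted Lions--Magenes norm of $\tilde\psi$ on $f$, i.e.
\[
\|A^{1/2}\nabla\psi\|_{L^2(T)}\le C\Big(|\tilde\psi|_{H^{1/2}(f)}+\big\|\,\tilde\psi/\dist(\cdot,\partial f)^{1/2}\,\big\|_{L^2(f)}\Big),
\]
with $C$ depending on $A_{\min},A_{\max}$ only. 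It therefore remains to estimate these two terms by $\|A^{1/2}\nabla v\|_{L^2(\omega)}+H\|\nabla\cdot(A\nabla v)\|_{L^2(\omega)}$.

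\emph{Step 2: estimating the Lions--Magenes norm of $\tilde\psi$.} Fix a cube $T$ with $f\subset\overline T$ and let $\widetilde T\subset\omega$ denote a fixed scale-$H$ neighbourhood of $T$. Let $I_T v$ be the trilinear nodal interpolant of $v$ and $E_T$ the transfinite (Coons) extension into $T$ of the edge data $\{(v-I_Tv)|_{e'}\}_{e'\ \mathrm{edge\ of}\ T}$, and set $G=I_Tv+E_T$. Because the restrictions to $f$ of $I_Tv$ and of that blending coincide with the corresponding pieces of $I_Hv$ and $J_Hv$, one has $G|_f=(I_Hv+J_Hv)|_f$, hence $\tilde\psi=(v-G)|_f$. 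Then: (i) by the trace theorem on $T$, $|\tilde\psi|_{H^{1/2}(f)}\le C\|\nabla(v-G)\|_{L^2(T)}\le C\big(\|\nabla v\|_{L^2(T)}+H^{d/2-1+\alpha}[v]_{C^{\alpha}(\widetilde T)}\big)$, using the elementary bound $|\nabla G|\le CH^{\alpha-1}[v]_{C^{\alpha}(\widetilde T)}$ on $T$; and (ii) since $v-G$ vanishes on $\partial f$ and lies in $C^{\alpha}$, $|\tilde\psi(x)|\le C[v]_{C^{\alpha}(\widetilde T)}\dist(x,\partial f)^{\alpha}$, so $\big\|\tilde\psi/\dist(\cdot,\partial f)^{1/2}\big\|_{L^2(f)}^2\le C[v]_{C^{\alpha}(\widetilde T)}^2\int_f\dist(x,\partial f)^{2\alpha-1}\,dx\le CH^{d-2+2\alpha}[v]_{C^{\alpha}(\widetilde T)}^2$ (the integral converges since $\alpha>0$). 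The scaled interior $C^{\alpha}$ estimate, $H^{d/2-1+\alpha}[v]_{C^{\alpha}(\widetilde T)}\le C\big(\|\nabla v\|_{L^2(\omega)}+H\|\nabla\cdot(A\nabla v)\|_{L^2(\omega)}\big)$, makes both right-hand sides bounded by $C\big(\|A^{1/2}\nabla v\|_{L^2(\omega)}+H\|\nabla\cdot(A\nabla v)\|_{L^2(\omega)}\big)$, because the exponents $d/2-1+\alpha$ and $(d-2+2\alpha)/2$ agree (both equal $1/2+\alpha$ for $d=3$). Summing over the two cubes adjacent to $f$ gives $\|F_fv\|_{\cH^{1/2}(f)}\le C\big(\|A^{1/2}\nabla v\|_{L^2(\omega)}+H\|\nabla\cdot(A\nabla v)\|_{L^2(\omega)}\big)$, and then $H\|\nabla\cdot(A\nabla v)\|_{L^2(\omega)}\le H\|\nabla\cdot(A\nabla v)+k^2V^2v\|_{L^2(\omega)}+\|kVv\|_{L^2(\omega)}$ (using $H=O(1/k)$) combined with $\|A^{1/2}\nabla v\|_{L^2(\omega)}+\|kVv\|_{L^2(\omega)}\le C\|v\|_{\cH(\omega)}$ yields exactly the asserted estimate. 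The companion Proposition \ref{pro-compactness-restriction-of-edge} follows the same way, and more directly, since edges stay one-dimensional: one uses $G=I_Tv$ (no face blending) on the four cubes containing $e$, and the $\dist(\cdot,\partial e)^{-1}$-weighted integral converges by the same $C^{\alpha}$ vanishing at the endpoints.

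\emph{Expected main obstacle.} The crux is Step 2. The nodal interpolant $I_Tv$ --- and hence $E_T$, $I_Hv$ and $J_Hv$ --- is not $H^1$-stable in dimension $\ge2$, so $v-G$ cannot be controlled by $\|\nabla v\|_{L^2}$ alone; the estimates close only because $v$ is H\"older continuous on the $O(H)$ oversampling neighbourhood with a $k$-independent constant (this is where $d\le3$ and $\nabla\cdot(A\nabla v)\in L^2$ are essential), and because the powers of $H$ line up precisely in $d=3$: the Lions--Magenes integral contributes $H^{d-2+2\alpha}[v]_{C^{\alpha}}^2$, the interpolation error contributes $(H^{d/2-1+\alpha}[v]_{C^{\alpha}})^2$, and the scaled $C^{\alpha}$ estimate controls exactly $H^{d/2-1+\alpha}[v]_{C^{\alpha}}$ by $\|\nabla v\|_{L^2(\omega)}+H\|\nabla\cdot(A\nabla v)\|_{L^2(\omega)}$. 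The remaining effort is bookkeeping: verifying $G|_f=(I_Hv+J_Hv)|_f$ for the precise 3D definitions of $I_H,J_H$ in Section \ref{sec 3d}; carrying out the trace--extension estimate on the cube with $k$- and $H$-uniform constants; and ensuring the oversampling domain $\omega$ is a genuine $O(H)$ neighbourhood of $f$ (as the hypothesis $l_1f\subset\omega$ should be read), so that the interior $C^{\alpha}$ estimate applies on the two adjacent cubes. All of this follows the pattern of Lemma 3.9 of \cite{chen2020exponential}, now with the extra edge-interpolation layer characteristic of the 3D construction.
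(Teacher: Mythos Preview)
Your proposal is correct and follows essentially the same approach as the paper. In fact the paper gives no detailed proof here: it merely asserts that the argument is ``analogous to Lemma \ref{lemma: bound H 1/2 by energy norm} in the 2D case, where we use local $C^{\alpha}$ estimates to conclude the proofs,'' and the proof of Lemma \ref{lemma: bound H 1/2 by energy norm} itself just cites Lemma~3.9 of \cite{chen2020exponential} and then applies the same triangle-inequality trick you perform at the end of Step~2. Your write-up is therefore a faithful and careful elaboration of the sketch the paper leaves to the reader, including the key point---that nodal/edge interpolation is not $H^1$-stable in $d\geq 2$ and must be rescued by the local $C^{\alpha}$ estimate with $k$-independent constants---which the paper highlights only in a remark.
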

The norms $\cH^{1/2}(e)$ and $\cH^{1/2}(f)$ are defined via the Helmholtz-harmonic extension of the edge functions and face functions, similar to Definition \ref{def: H 1/2 norm}. They are equivalent to the $H^{1/2}_{00}$ norm on the faces and are bounded by the local $C^\alpha$ norm.
The proofs of the propositions will be analogous to Lemma \ref{lemma: bound H 1/2 by energy norm} in the 2D case, where we use local $C^\alpha$ estimates to conclude the proofs.

Combing all the above, we obtain a framework in 3D, using nodal basis, edge basis, and face basis, together with the bubble part to get nearly exponentially convergent accuracy.
}
\section{Concluding Remarks}
\label{sec-concluding-remark}
In this paper, we have developed a multiscale framework for solving the Helmholtz equation in heterogeneous media and high frequency regimes. The coarse-fine scale decomposition of the solution space is motivated by the MsFEM. In our algorithm, the coarse scale Helmholtz-harmonic part and the fine scale bubble part are computed separately. Their own structures are carefully explored, such as the low complexity of the coarse part and the locality of the fine part. A nearly exponential rate of convergence is proved rigorously and is confirmed numerically for a wide range of the Helmholtz equations with rough coefficients, high contrast, and mixed boundary conditions.

Perhaps surprisingly, our framework implies that designing an accurate multiscale method for the Helmholtz equation is not much more different from that for the elliptic equation. Many techniques in the elliptic case can be successfully adapted once the mesh size satisfies $H=O(1/k)$, a condition that does not suffer from the pollution effect.
This work also demonstrates the broad applicability of our exponentially convergent multiscale framework proposed originally in \cite{chen2020exponential}.

Most discussions in this paper are concerned with dimension $d=2$. \yw{We provide a generalization incorporating face basis with dimension $d=3$ in Section \ref{sec 3d}, where a similar idea of localizing the basis via a non-overlapped decomposition is exploited.}
%We believe generalizations and analysis into higher dimensions is possible via similar ideas, although the basis will become increasingly more complicated.}
%As noted in Section 5.3 of \cite{chen2020exponential}, generalization to $d \geq 3$ requires a new localization strategy for the Helmholtz-harmonic part. Nodal interpolation is not enough since different elements can interact via low dimensional cells other than nodes. 
%One needs to design approximation spaces for all cells with co-dimension $1,2,...,d$ that constitute the mesh. 

It is of future interest to extend this methodology systematically 
%to high dimensions and 
to other equations such as the Schrodinger equation, where the problem is time-dependent and the potential function could introduce indefiniteness into the system. On the other hand, developing a better theoretical understanding of the behavior of the multiscale framework with respect to high contrast in the media is also an exciting direction for further exploration.

% \bibliographystyle{siamplain}
% \bibliography{ref}

\end{document}